\documentclass{amsart}
\usepackage{verbatim}
\usepackage{rotating} 
\usepackage{amssymb}
\usepackage{mathrsfs,amsfonts,amsmath,amssymb,epsfig,amscd,xy,amsthm,pb-diagram} 



\hyphenation{arch-i-med-e-an}


\newtheorem{theorem}{Theorem}[section]
\newtheorem{proposition}[theorem]{Proposition}
\newtheorem{notation}[theorem]{Notation}
\newtheorem{lemma}[theorem]{Lemma}

\newtheorem{corollary}[theorem]{Corollary}

\newtheorem{definition}[theorem]{Definition}

\theoremstyle{plain}

\theoremstyle{remark}

\newtheorem{remark}[theorem]{Remark}
\newtheorem{example}[theorem]{Example}

\newcommand{\C}{{\mathbb C}}

\newcommand{\Q}{{\mathbb Q}}

\newcommand{\R}{{\mathbb R}}

\newcommand{\T}{{\mathbb T}}
\newcommand{\Z}{{\mathbb Z}}
\newcommand{\N}{{\mathbb N}}

\newcommand{\cA}{{\mathcal A}}
\newcommand{\cE}{{\mathcal E}}

\newcommand{\fp}{\mathfrak p}

\DeclareMathOperator{\GF}{GF}

\newcommand{\Qbar}{\bar{\Q}}

\DeclareMathOperator{\lcm}{lcm}

\DeclareMathOperator{\Norm}{N}

\DeclareMathOperator{\ord}{ord}

\DeclareMathOperator{\card}{card}

\newcommand{\bF}{{\mathbb F}}

\newcommand{\cO}{\mathcal{O}}

\newcommand{\cP}{\mathcal{P}}



\DeclareMathOperator{\den}{den}

\author{Jason P.~Bell}
\address{
Jason P.~Bell\\
University of Waterloo\\
Department of Pure Mathematics\\
Waterloo, Ontario, Canada N2L 3G1}
\email{jpbell@uwaterloo.ca}

\author{Keira Gunn}
\address{
Keira Gunn \\
Department of Mathematics and Statistics\\
University of Calgary\\
AB T2N 1N4, Canada
}
\email{keira.gunn1@ucalgary.ca}

\author{Khoa D.~Nguyen}
\address{
Khoa D.~Nguyen \\
Department of Mathematics and Statistics\\
University of Calgary\\
AB T2N 1N4, Canada
}
\email{dangkhoa.nguyen@ucalgary.ca}

\author{J.~C.~Saunders}
\address{
J.~C.~Saunders \\
Department of Mathematics and Statistics\\
University of Calgary\\
AB T2N 1N4, Canada
}
\email{john.saunders1@ualgary.ca}

\keywords{P\'olya-Carlson dichotomy, positive characteristic tori, Artin-Mazur zeta function}
\subjclass[2010]{Primary: 37A35, 37P20. Secondary: 11T99}

\begin{document}
	\title[Criterion for the P\'olya-Carlson dichotomy and application]{A general criterion for the P\'olya-Carlson dichotomy and application}
	
	\date{May 2022}
	
	\begin{abstract}
	We prove a general criterion for an irrational power series
	$f(z)=\displaystyle\sum_{n=0}^{\infty}a_nz^n$ with coefficients in a number field $K$
	to admit the unit circle as a natural boundary. 
	As an application, let $F$ be a finite field, let $d$ be a positive integer, let
	$A\in M_d(F[t])$ be a $d\times d$-matrix with entries in $F[t]$, and let 
	$\zeta_A(z)$ be the Artin-Mazur zeta function associated to the 
	multiplication-by-$A$ map on the compact abelian group
	$F((1/t))^d/F[t]^d$. We provide a complete characterization of when $\zeta_A(z)$ 
	is algebraic and prove that it admits the circle of convergence 
	as a natural boundary in the transcendence case. 
	This is in stark contrast to the case of linear endomorphisms on 
	$\mathbb{R}^d/\mathbb{Z}^d$ in which Baake, Lau, and Paskunas prove that 
	the zeta function is always rational. Some connections to earlier work of 
	Bell, Byszewski, Cornelissen, Miles, Royals, and Ward are discussed. Our method uses a similar technique in recent work of Bell, Nguyen, and Zannier together with certain patching arguments involving linear recurrence sequences.   
	\end{abstract}
	
	\maketitle
	
	\section{A general criterion for the P\'olya-Carlson dichotomy}
	Throughout this paper, let $\N$ denote the set of positive integers and let $\N_0=\N\cup\{0\}$. 
	We begin with the well-known P\'olya-Carlson dichotomy \cite{Car21_U}:
	\begin{theorem}\label{thm:PC}
	A power series $f(z)=\displaystyle\sum a_nz^n\in\Z[[x]]$ that converges inside the unit disk is either rational or it admits the unit circle as a natural boundary. Moreover, if $f(z)$ is rational then each pole is located at a root of unity.
	\end{theorem}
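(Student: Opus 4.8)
The plan is to run the classical potential-theoretic argument. First I would dispose of a degenerate case: if the radius of convergence of $f$ is strictly greater than $1$, then Cauchy's estimates on a circle $|z|=R$ with $1<R<\text{(radius)}$ give $|a_n|\le \max_{|z|=R}|f(z)|\,R^{-n}\to 0$, so $a_n=0$ for all large $n$ and $f$ is a polynomial, hence rational (and vacuously has all poles at roots of unity). Since the hypothesis excludes radius $<1$, we may henceforth assume the radius of convergence equals exactly $1$. Now suppose, aiming at the first assertion by contradiction, that the unit circle is \emph{not} a natural boundary; then $f$ continues holomorphically across some nonempty open arc $J\subseteq\partial\D$, so there is a compact set $E:=\partial\D\setminus J\subsetneq\partial\D$ and an open $\Omega\supseteq\overline{\D}\setminus E$ on which $f$ is holomorphic. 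Because $E$ is contained in a proper closed subarc of $\partial\D$ and the full circle has logarithmic capacity $1$ while any proper subarc has capacity strictly less than $1$, monotonicity of capacity gives $\operatorname{cap}(E)<1$; this strict inequality is the whole engine of the argument.

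Next I would invoke Kronecker's rationality criterion: $f$ is a rational function if and only if the Hankel determinants $H_n:=\det\bigl(a_{i+j}\bigr)_{0\le i,j\le n}$ vanish for all sufficiently large $n$. Since the $a_n$ are integers, each $H_n\in\Z$, so it suffices to prove $H_n\to 0$. Writing each entry as $a_m=\tfrac{1}{2\pi i}\oint_{\gamma}f(w)\,w^{-m-1}\,dw$ for a Jordan curve $\gamma$ around $0$ lying in $\Omega$, and expanding the determinant, one obtains the standard multiple-contour representation of $H_n$ as an integral over $\gamma^{\,n+1}$ of the product of a Vandermonde factor $\prod_{i<j}(w_i-w_j)$, the values $\prod_i f(w_i)$, and an explicit power of $\prod_i w_i$. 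The point is that $\gamma$ may be pushed to radius slightly larger than $1$ over the arc $J$, being forced only slightly inside the unit circle over a small neighbourhood of $E$; bounding the Vandermonde factor by the order-$n$ diameters of $\gamma$ (which converge to $\operatorname{cap}(\gamma)$, close to $1$ as $\gamma$ is a small perturbation of $\partial\D$) against the power of $\prod_i w_i$, and optimising over such contours $\gamma$, should yield an estimate of the form $\limsup_n |H_n|^{1/n^2}\le\rho$ with $\rho<1$ controlled by $\operatorname{cap}(E)$. Hence $|H_n|<1$ for all large $n$, so $H_n=0$ eventually and $f$ is rational.

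The step I expect to be the main obstacle is exactly this last estimate. One cannot take $\gamma$ at a single radius $<1$: that produces a bound that blows up like $r^{-n(n+1)}$ and is useless. The gain must come from a genuine potential-theoretic balancing — precisely when several integration variables $w_i$ cluster near $E$ in order to exploit the small radius available there, the Vandermonde factor $\prod_{i<j}(w_i-w_j)$ penalises them, and quantifying this trade-off requires the equilibrium measure of $E$ together with Fekete-point and transfinite-diameter asymptotics. This is the technical heart of the Pólya–Carlson theorem and is where essentially all the work lies.

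Finally, for the \emph{moreover} clause, suppose $f$ is rational. By a classical lemma of Fatou on power series with integer coefficients, we may write $f=p/q$ with $p,q\in\Z[z]$ coprime and $q(0)=1$. Since $f$ converges throughout $\D$, it has no pole there, so every root $\beta$ of $q$ satisfies $|\beta|\ge 1$; writing $q(z)=c\prod_i(z-\beta_i)$ and comparing with $q(0)=1$ gives $|c|\prod_i|\beta_i|=1$ with $c\in\Z\setminus\{0\}$ and each $|\beta_i|\ge 1$, forcing $|c|=1$ and $|\beta_i|=1$ for all $i$. Thus $q$ is, up to sign, a monic integer polynomial all of whose roots lie on the unit circle, so by Kronecker's theorem every $\beta_i$ is a root of unity; as the $\beta_i$ are exactly the poles of $f$, this proves the claim.
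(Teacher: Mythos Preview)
Your outline is correct and shares the paper's skeleton: Hankel determinants, integrality, and Kronecker's rationality criterion. The paper does not prove Theorem~\ref{thm:PC} independently but recovers it as the case $K=\Q$, $S=\emptyset$ of Theorem~\ref{thm:main 1}; there the decisive bound on $|\Delta_n|$ comes from P\'olya's inequality (Theorem~\ref{thm:Polya}) rather than from a direct capacity argument. The point is that once $f$ continues across \emph{any} arc of $\partial\D$, the union of $\D$ with a single small disk centred on that arc is already a simply connected domain $G$ of conformal radius $\rho>1$, and P\'olya's inequality on $G$ gives $|\Delta_n|\le (n+1)!(C\Vert f\Vert_\Gamma)^{n+1}r^{-n(n+1)}$ for any fixed $1<r<\rho$; since $\Delta_n\in\Z$ this forces $\Delta_n=0$ for large $n$. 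So one never has to confront the equilibrium-measure/Fekete balancing you flag as the obstacle: the conformal map of a \emph{single} simply connected $G$ does that work implicitly, and there is no need to track the whole singular set $E$ or its capacity. Your capacity route would also succeed (indeed it is close in spirit to how P\'olya's inequality itself is proved), but it is heavier than required here. A small slip: in the multiple-contour formula for $\Delta_n$ the Vandermonde enters as $\prod_{i<j}(w_i-w_j)^2$, not to the first power.

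For the ``moreover'' clause your argument via Fatou's lemma and Kronecker's theorem on algebraic integers is correct and is the classical route; in the paper this conclusion falls out of the more general machinery culminating in Proposition~\ref{prop:betai's are roots of unity} (a growth argument at the places of $K$), which in the special case $K=\Q$, $S=\emptyset$ collapses to essentially the same reasoning.
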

	
	For an algebraic number $\alpha$, we define its denominator, denoted $\den(\alpha)$, to be the smallest positive integer $d$ such that $d\alpha$ is an algebraic integer. Our first main result is the following:
	\begin{theorem}\label{thm:main 1}
	Let $S$ be a subset of $\N$ such that $\vert S\cap [1,n]\vert=o(n/\log n)$ as $n\to\infty$. Let $K$ be a number field and let $f(z)=\displaystyle\sum a_nz^n\in K[[x]]$
	such that $\sigma(f):=\displaystyle\sum \sigma(a_n)z^n$ converges in the open unit disk for every
	embedding $\sigma:\ K\rightarrow\C$. Suppose that for every $\beta>1$, we have
	\begin{equation}\label{eq:thm main 1}
	\lcm\{\den(a_k):\ k\leq n, k\notin S\}<\beta^n
	\end{equation}
	for every sufficiently large integer $n$. Then either $f(z)$ admits the unit circle as a natural boundary or there exists $\displaystyle\sum b_nz^n\in K[[z]]$ that is the power series of a rational function whose poles are located at the roots of unity such that $a_n=b_n$ for every $n\in \N\setminus S$.
	\end{theorem}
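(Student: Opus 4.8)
The plan is to follow the strategy that underlies the classical proof of the Pólya–Carlson theorem (via Hankel determinants and a rationality criterion of Borel–Dwork type), but adapted to tolerate the exceptional set $S$ and to work over a general number field $K$. Assume that $f(z)$ does \emph{not} admit the unit circle as a natural boundary; then $\sigma(f)$ extends meromorphically past some arc of the unit circle for at least one embedding $\sigma$, and a compactness/averaging argument over the (finitely many) embeddings lets us reduce to controlling a single auxiliary function. The aim is to produce a rational function $g(z)=\sum b_n z^n\in K[[z]]$ with all poles at roots of unity such that $a_n=b_n$ for all $n\notin S$.

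First I would set up the arithmetic side. Let $D_n=\lcm\{\den(a_k): k\le n,\ k\notin S\}$; by hypothesis \eqref{eq:thm main 1}, $D_n=e^{o(n)}$. Multiplying the coefficients $a_k$ with $k\notin S$ by suitable integers, and handling the $k\in S$ coefficients separately (there are only $o(n/\log n)$ of them up to $n$), I would build, for each truncation length $n$, a modified power series whose first $n$ coefficients outside $S$ are algebraic integers of controlled height, and then form Hankel-type determinants of size roughly $m\times m$ with $m$ growing slowly compared to $n$. The key point is the tension between two estimates: an arithmetic upper bound on the house (maximum archimedean absolute value of conjugates) and the denominator of such a determinant, which grows like $D_n^{O(m)}=e^{o(n)}$ times a height contribution that is subexponential because the $a_k$ are bounded polynomially (coming from convergence in the unit disk); versus an analytic upper bound showing the determinant is small, of size $e^{-cn}$ or so, coming from the meromorphic continuation across an arc together with Cauchy estimates on a slightly larger region. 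The sparsity condition $\vert S\cap[1,n]\vert=o(n/\log n)$ is exactly what is needed so that the $\log$-many bad coefficients contribute only $o(n)$ to the logarithmic height budget; this is where the precise rate $o(n/\log n)$ (rather than merely $o(n)$) is used.

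Next I would run the standard dichotomy: for infinitely many $n$ the relevant determinant is a nonzero algebraic number that is simultaneously an algebraic integer (after clearing the $e^{o(n)}$ denominator) of house $<1$ in absolute value at every archimedean place, which forces its norm to be $0$, a contradiction — hence the determinant vanishes for all large parameters, which by the Kronecker/Borel rationality criterion yields a linear recurrence satisfied by $(a_n)_{n\notin S}$ eventually. Patching this recurrence is the delicate combinatorial step: because the recurrence is only known to hold for indices avoiding $S$, and $S$ is merely sparse (not syndetic-free or structured), I would invoke the patching arguments for linear recurrence sequences alluded to in the abstract — covering $\N$ by arithmetic-progression-like blocks on which the recurrence genuinely holds, showing the resulting "local" linear recurrence sequences agree on overlaps, and gluing them into a single global linear recurrence sequence $(b_n)$ with $b_n=a_n$ for all $n\notin S$. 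Finally, the growth hypothesis — $\sigma(\sum b_n z^n)$ converges in the unit disk for every $\sigma$, inherited from $f$ off the sparse set — forces the rational function $\sum b_n z^n$ to have all its poles on or outside the unit circle; combined with the denominator bound \eqref{eq:thm main 1} and an argument in the style of the Fatou/Pólya lemma (an algebraic power series with coefficients of subexponentially-bounded denominator whose poles lie on the closed unit disk must have all poles at roots of unity), we conclude the poles are at roots of unity, completing the proof.

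The main obstacle I anticipate is the patching step: turning an "eventual recurrence along $\N\setminus S$" into a genuine global linear recurrence, since a sparse set can still be irregular enough that a single shift of the recurrence need not be verifiable, so one must carefully choose many shifts, control the characteristic roots obtained from each, and prove they coincide — this is where the technical heart of the argument lies, and where the interplay with the Skolem–Mahler–Lech-type rigidity of linear recurrences over fields of characteristic zero is essential.
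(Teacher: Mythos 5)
Your overall architecture (Hankel determinants, P\'olya's inequality played against an arithmetic denominator bound, Kronecker's criterion, then a patching step) matches the paper's, but there is a genuine gap at the very first step: you propose to clear denominators of the $a_k$ with $k\notin S$ and to ``handle the $k\in S$ coefficients separately'' on the grounds that there are only $o(n/\log n)$ of them. Sparsity of $S$ gives no control whatsoever over $\den(a_k)$ for $k\in S$ --- see the example $\frac{1}{1-z}+\sum_{n\in S}z^n/n!$ in Remark~\ref{rem:1 after main 1} --- so the Hankel determinant of $f$ itself has entries whose denominators can dwarf the analytic upper bound, and the norm argument collapses. You also cannot simply delete or rescale those entries, because any coefficient-by-coefficient modification destroys the analytic continuation that P\'olya's inequality requires. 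The missing device is to replace $f$ by $\sum_\ell P(\ell)a_\ell z^\ell$ where $P$ is a polynomial of degree $o(n/\log n)$ vanishing on $S\cap[1,20n]$: this annihilates the bad coefficients while keeping the auxiliary series in the span of the $z^jf^{(j)}(z)$, hence analytic on the same enlarged domain, and the hypothesis $\vert S\cap [1,n]\vert=o(n/\log n)$ is used precisely to keep $\vert P(\ell)\vert=e^{o(n)}$ so that the P\'olya bound survives.

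The second gap is the patching step, which you rightly flag as the heart of the matter but for which you offer no workable mechanism. The paper does not glue ``local recurrences on arithmetic-progression-like blocks''; it compares the polynomial--exponential representations of $(A_n(\ell)a_\ell)_{\ell=n}^{10n}$ and $(A_{n+1}(\ell)a_\ell)$ on their overlap via a uniqueness lemma for such representations, and this comparison only works because the number $s(n)$ of characteristic roots is forced to be very small relative to $n$: that control is obtained by running the determinant argument for the whole family $z^kA_n(z)$, $0\le k\le \epsilon n/\log n$, and inducting on $k$ to get $r(n)+\lfloor\epsilon n/\log n\rfloor s(n)\le n$. Without such a bound, multiplying by the transition polynomial $A_{n+1}/A_n$ could inflate the rank past the uniqueness threshold and the gluing fails. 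Finally, even after patching one only obtains $a_\ell=\sum R_i(\ell)\beta_i^\ell$ with the $R_i$ \emph{rational functions}; a further argument (the paper's Chebotarev-based lower bound on $\lcm$ of denominators, Lemma~\ref{lem:lcm of den of g(k)}) is needed to show the $R_i$ are polynomials, which your Fatou-style endgame does not supply.
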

	
	\begin{remark}\label{rem:1 after main 1}
	Theorem~\ref{thm:PC} is a special case of 
	Theorem~\ref{thm:main 1} when $K=\Q$ and  $S=\emptyset$. Any $\sum b_nz^n$ as in the conclusion of Theorem~\ref{thm:main 1} satisfies that $\den(b_n)$ is bounded, hence the LHS of \eqref{eq:thm main 1} is bounded. Without any further assumption on the $a_n$'s for $n\in S$, the conclusion $a_n=b_n$ for $n\in \N\setminus S$ is best possible given examples such as 
	$\displaystyle\frac{1}{1-z}+\sum_{n\in S}\frac{z^n}{n!}$ that is convergent in the open unit disk and extends to an analytic function on the whole complex plane except at $z=1$. 
	\end{remark}
	
	\begin{remark}\label{rem:2 after main 1}
	Although we do not know whether the upper bound $o(n/\log n)$ for $\vert S\cap [1,n]\vert$ can be improved, it is at least very close to being optimal: we explain why Theorem~\ref{thm:main 1} is no longer valid when $o(n/\log n)$ is replaced by $o(n/(\log n)^{1-\epsilon})$ for any $\epsilon>0$.
	 Consider
	$$\sum a_nz^n=\log(1+z)=z-\frac{z^2}{2}+\frac{z^3}{3}-\cdots$$ 
	in which $\den(a_n)=n$ for every $n\in\N$. In the recent paper by Bell, Nguyen, and Zannier \cite[Equation~(7)]{BNZ22_DF2}, the authors construct a subset $S$ of $\N$ such that $\vert S\cap [1,n]\vert=O(n\log\log n/\log n)$ and for every $\beta>1$, we have:
	$$\lcm\{i:\ i\leq n, i\notin S\}<\beta^n$$
	for all sufficiently large $n$. 
	Therefore, in Theorem~\ref{thm:main 1} we cannot replace the upper bound $o(n/\log n)$ by $o(g(n))$ where $g(n)$ is any function that dominates $n\log\log n/\log n$ as $n\to\infty$.
	\end{remark}
	
	\begin{remark}\label{rem:3 after main 1}
	 While Theorem~\ref{thm:PC} as well as a couple more results in earlier work by other authors discussed later in this section are special cases of Theorem~\ref{thm:main 1} when $S=\emptyset$, the appearance of $S$ in the statement of Theorem~\ref{thm:main 1}
	 is not merely for the sake of a generalization without any further use. On the contrary, the set $S$ brings much extra flexibility and is truly necessary in certain applications. The readers who are interested in applications of Theorem~\ref{thm:main 1} are referred to Section~\ref{sec:app} in which we provide a complete characterization of when the Artin-Mazur zeta function of a linear endomorphism on a positive characteristic torus is algebraic and establish a result on the natural boundary of this zeta function in the transcendence case.  
	\end{remark}
		
	In the paper \cite{BMW14_TA}, Bell, Miles, and Ward consider the class of endomorphisms on compact abelian groups for which the number of periodic points of a given period is always finite and conjecture that the Artin-Mazur zeta function satisfies the P\'olya-Carlson dichotomy: either it is rational or it admits the circle of the radius of convergence as a natural boundary. Some partial results are given in \cite[Theorem~15]{BMW14_TA} using \cite[Lemma~17]{BMW14_TA}. More partial results for endomorphisms on abelian varieties in positive characteristic are obtained by Byszewski-Cornelissen \cite[Theorem~5.5]{BC18_DO} using a result by Royals-Ward \cite[Theorem~A1]{BC18_DO}. The power series considered in \cite[Lemma~17]{BMW14_TA} and \cite[Theorem~A1]{BC18_DO} are very special case of power series of the form 
	$f(z)=\displaystyle\sum a_nz^n$ that converge in the open unit disk and 
	each
	$a_n$ is either an integer or has the form $a_n=p_1^{c_{1,n}}\cdots p_m^{c_{m,n}}$
	where $\{p_1,\ldots,p_m\}$ is a given set of prime numbers, the $c_{i,n}$'s are rational numbers of bounded denominator, and $\vert c_{i,n}\vert=O(\log n)$. Let $L$ be the $\lcm$ of the denominators of all the $c_{i,n}$'s. Then we can apply Theorem~\ref{thm:main 1} immediately with $K=\Q\left(p_1^{1/L},\ldots,p_m^{1/L}\right)$, $S=\emptyset$, and
	\begin{equation}\label{eq:n^O(1)}
	\lcm\{\den(a_k):\ k\leq n\}=n^{O(1)}
	\end{equation}
	to conclude that either $f(z)$ is a rational function or it admits the unit circle as a natural boundary.
	
	In order to illustrate the method of this paper, we continue with the above example and explain how to establish the desired P\'olya-Carlson dichotomy. For $n\in\N$, consider the Hankel determinant
\begin{equation*}
\Delta_n=\det\begin{pmatrix} a_0 & a_{1} & \ldots & a_{n} \\
 a_{1} & a_{2} &\ldots  & a_{n+1}  \\
 \ldots  \\
 a_{n} & a_{n+1} &\ldots  & a_{2n} \end{pmatrix}.
\end{equation*}
Let $\Norm_{K/\Q}$ denote the norm function on the field $K=\Q\left(p_1^{1/L},\ldots,p_m^{1/L}\right)$. Suppose that $f(z)$ can be extended analytically beyond the open unit disk, then P\'olya's inequality \cite[Section~2.4]{BNZ22_DF2} implies that
there exists $r\in (0,1)$ such that
$$\vert \Norm_{K/\Q}(\Delta_n)\vert < r^{n^2}$$
for all sufficiently large $n$. However $\Norm_{K/\Q}(\Delta_n)$ is a rational number whose denominator is
$n^{O(n)}$ thanks to \eqref{eq:n^O(1)}. This implies that $\Delta_n=0$ for all large $n$ and Kronecker's criterion \cite[Section~2]{BNZ22_DF2} yields the rationality of $f$. This gives a much shorter proof to 
\cite[Theorem~A1]{BC18_DO} by Royals-Ward. We emphasize that the power series in our application in Section~\ref{sec:app} do not have the above form for $f(z)$ (i.e. the form
$a_n=p_1^{c_{1,n}}\cdots p_m^{c_{m,n}}$ with $\vert c_{i,n}\vert=O(\log n)$, etc.) and therefore one cannot use the earlier results in \cite{BMW14_TA} or \cite[Theorem~A1]{BC18_DO}.
		
	We are back to a general power series $f(z)=\displaystyle\sum_{n=0}^{\infty} a_nz^n\in K[[z]]$, a number field $K$, and an arbitrary subset $S$ of $\N$. In order to ``avoid'' $\den(a_k)$ for $k\in S$, our first idea taken from the paper \cite{BNZ22_DF2} is to consider
	$\displaystyle \sum_{k=0}^{\infty} P_n(k)a_kz^k$ where, roughly speaking, $P_n(z)$ is an integer-valued polynomial that vanishes on $[1,Cn]\cap S$ where $C$ is a large but fixed integer. In \cite[Theorem~3.9]{BNZ22_DF2}, this method is successful even with the much weaker condition $\vert S\cap [1,n]\vert=o(n)$ compared to the condition 
$\vert S\cap [1,n]\vert=o(n/\log n)$ imposed in Theorem~\ref{thm:main 1}. The reason is that the power series $f(z)$ in \cite{BNZ22_DF2} satisfies the powerful D-finiteness property and hence the authors of \cite{BNZ22_DF2} can afford to work with a weaker property on $S$. On the other hand, in the current paper the power series $f(z)$ need not satisfy any extra global property and a new idea is needed for the proof of Theorem~\ref{thm:main 1}. Our way to proceed is to compare the different power series
$\displaystyle \sum_k P_n(k)a_kz^k$ for \emph{different} polynomials $P_n(z)$	that vanish on $[1,Cn]\cap S$ and compare those series with the different  $\displaystyle\sum_k P_{n+1}(k)a_kz^k$	as well.

	We can use a slight variant of the above method to prove a result establishing the rationality of $f(z)$. In view of Remark~\ref{rem:1 after main 1}, a further condition on the $a_n$'s for $n\in S$ is needed for this purpose:
	\begin{theorem}\label{thm:main 2}
	Let $S$, $K$, and $f(z)$ be as in Theorem~\ref{thm:main 1}. We assume the further condition:
	\begin{equation}\label{eq:main 2 further condition}
	\den(a_n)=e^{o(n)}\quad \text{for $n\in S$ and $n\to\infty$.}
	\end{equation}
	Then either $f(z)$ admits the unit circle as a natural boundary or it is a rational function whose poles are located at the roots of unity.
	\end{theorem}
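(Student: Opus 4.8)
The plan is to deduce Theorem~\ref{thm:main 2} from Theorem~\ref{thm:main 1} rather than rerun the patching machinery. If $f$ admits the unit circle as a natural boundary we are in the first alternative and are done, so assume not; then Theorem~\ref{thm:main 1} provides a power series $h(z)=\sum b_nz^n\in K[[z]]$ representing a rational function all of whose poles lie at roots of unity, with $a_n=b_n$ for every $n\in\N\setminus S$. Set $g:=f-h=\sum_{n\in S}c_nz^n$, $c_n:=a_n-b_n$. I would record three elementary facts: (i) $\den(c_n)$ divides $\den(a_n)\den(b_n)$, and since $\den(b_n)$ is bounded (Remark~\ref{rem:1 after main 1}) while $\den(a_n)=e^{o(n)}$ on $S$ by \eqref{eq:main 2 further condition}, we get $\den(c_n)=e^{o(n)}$ whenever $c_n\neq0$; (ii) for each embedding $\sigma\colon K\to\C$ the series $\sigma(g)=\sigma(f)-\sigma(h)$ converges in the open unit disk, since $\sigma(h)$ is rational with poles on the unit circle; and (iii) it suffices to prove $g$ is a \emph{polynomial}, for then $f=h+g$ is a rational function whose poles lie at roots of unity, placing us in the second alternative.

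To show $g$ is a polynomial I would split into two cases. Either some embedding $\sigma_0$ satisfies $\rho_0:=\limsup_n|\sigma_0(c_n)|^{1/n}<1$, or every $\sigma(g)$ has radius of convergence exactly $1$ (it is at least $1$ by~(ii)). In the latter case $g$ has infinitely many nonzero coefficients; indexing them $n_1<n_2<\cdots$, all lie in $S$, so $k\leq|S\cap[1,n_k]|=o(n_k/\log n_k)$, which forces $n_k/k\to\infty$. Then Fabry's gap theorem shows each $\sigma(g)$ has the unit circle as a natural boundary; since $\sigma(h)$ is analytic at every point of the unit circle except its finitely many root-of-unity poles, $\sigma(f)=\sigma(g)+\sigma(h)$ cannot be continued across any non-root-of-unity point, and as the set of continuation points on the circle is open this makes $f$ admit the unit circle as a natural boundary --- contradicting our assumption. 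Hence the former case holds. Then, choosing $\epsilon>0$ with $\rho:=(\rho_0+\epsilon)(1+\epsilon)^{[K:\Q]-1}<1$, one has for all large $n$ that $|\sigma_0(c_n)|\leq(\rho_0+\epsilon)^n$ and $|\sigma(c_n)|\leq(1+\epsilon)^n$ for every $\sigma$, so $|\Norm_{K/\Q}(c_n)|=\prod_\sigma|\sigma(c_n)|\leq\rho^n$; but $\Norm_{K/\Q}(c_n)\in\Q$ has denominator dividing $\den(c_n)^{[K:\Q]}=e^{o(n)}$, so $c_n\neq0$ would force $|\Norm_{K/\Q}(c_n)|\geq e^{-o(n)}$. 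Comparing, $c_n=0$ for all large $n$, so $g$ is a polynomial.

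The denominator arithmetic and the norm estimate are routine; the point that needs care is transferring the natural-boundary conclusion from $g$ to $f$ uniformly over all embeddings of $K$ --- i.e.\ checking that the decomposition $f=h+g$ really makes the dichotomy ``natural boundary versus rational with root-of-unity poles'' exhaustive --- and keeping the $e^{o(n)}$-type bounds uniform over the finitely many $\sigma$. As an alternative to Fabry's gap theorem one can run the Hankel-determinant/Pólya-inequality argument illustrated above (for the Royals--Ward example) directly on $g$: the bound $\den(c_n)=e^{o(n)}$ gives $\den(\Norm_{K/\Q}(\Delta_n(g)))=e^{o(n^2)}$, so an analytic continuation of $g$ past the unit circle forces $\Delta_n(g)=0$ for large $n$, Kronecker's criterion makes $g$ rational, and Skolem--Mahler--Lech together with the fact that $S$ has zero density (hence contains no infinite arithmetic progression) forces $g$ to be a polynomial; this is presumably the ``slight variant of the method'' alluded to before the statement.
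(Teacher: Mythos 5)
Your main argument is correct, and it is a genuinely different route from the paper's. The paper does \emph{not} deduce Theorem~\ref{thm:main 2} from Theorem~\ref{thm:main 1}; it reruns the whole Hankel/patching machinery with modified polynomials $\tilde{A}_n$ whose root set omits the smallest element of $S\cap[n,20n]$, using the hypothesis \eqref{eq:main 2 further condition} to absorb the single surviving denominator $\den(a_N)=e^{o(n)}$ into the determinant estimate; this produces a polynomial--exponential formula valid for \emph{all} large $n$ (not just $n\notin S$), after which Lemma~\ref{lem:lcm of den of g(k)} forces polynomiality. You instead treat Theorem~\ref{thm:main 2} as a corollary of Theorem~\ref{thm:main 1}: writing $f=h+g$ with $g$ supported on $S$, the sparsity $\vert S\cap[1,n]\vert=o(n/\log n)$ puts $g$ in the range of Fabry's gap theorem, so either $g$ (hence $f$, since $h$ is analytic off finitely many boundary points) has the unit circle as a natural boundary, or some conjugate of $g$ decays geometrically, and then the Liouville-type comparison $\vert\Norm_{K/\Q}(c_n)\vert\le\rho^n$ versus $\vert\Norm_{K/\Q}(c_n)\vert\ge \den(c_n)^{-[K:\Q]}=e^{-o(n)}$ kills all large coefficients. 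All the steps I can check (the denominator arithmetic, the bounded $\den(b_n)$ from Remark~\ref{rem:1 after main 1}, the transfer of the natural boundary from $g$ to $f$, the gap condition $n_k/k\to\infty$) are sound. Your proof is shorter and modular, at the cost of importing Fabry's theorem; the paper's is longer but self-contained within its own toolkit and, as a byproduct, identifies $a_n$ explicitly as a polynomial--exponential expression for all large $n$.

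One caution about your closing parenthetical ``alternative to Fabry'': the claim $\den\bigl(\Norm_{K/\Q}(\Delta_n(g))\bigr)=e^{o(n^2)}$ does not follow from $\den(c_m)=e^{o(m)}$ alone. A common denominator for all entries $c_m$, $m\le 2n$, is only controlled by $\prod_{m\in S\cap[1,2n]}\den(c_m)=e^{o(n^2/\log n)}$, so clearing denominators in the $(n+1)\times(n+1)$ determinant yields a bound of shape $e^{o(n^3/\log n)}$, which does not beat $r^{n(n+1)}$. This is exactly the obstruction that forces the paper to excise only \emph{one} element of $S$ at a time via $S_n$ and $\tilde{A}_n$. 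Since that sketch is offered only as an aside, it does not affect the validity of your proof, but as written it would not work.
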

	
	The organization of this paper is as follows. In the next sections we present some preliminary results concerning Hankel determinants, P\'olya's inequality, and polynomial-exponential sequences of finitely many terms. A large part about the first two topics is taken from \cite{BNZ22_DF2} while the third topic is needed for certain patching arguments involving polynomial-exponential sequences. Then we present the proof of Theorem~\ref{thm:main 1} following the aforementioned method. The proof of Theorem~\ref{thm:main 2} uses essentially the same method with some changes; we will describe these changes carefully while skipping the similar details. Finally in Section~\ref{sec:app}, we introduce the Artin-Mazur zeta function associated to a matrix multiplication map on positive characteristic tori, provide a complete characterization of when this zeta function is algebraic, and apply Theorem~\ref{thm:main 1} to prove a result on the natural boundary in the transcendence case.
	
	\textbf{Acknowledgements.} Jason Bell was supported by NSERC grant RGPIN-2016-03632. Keira Gunn was supported by a Vanier Canada Graduate Scholarship. Khoa Nguyen and J.~C.~Saunders were supported by NSERC grant  RGPIN-2018-03770 and CRC tier-2 research stipend 950-231716. The authors wish to thank Professor Tom Ward for helpful discussions.
	
	\section{Preliminary results}\label{prelim}
	Parts of this section are taken from \cite[Section~2]{BNZ22_DF2}. The paper \cite{BNZ22_DF2} refines 
	several results in \cite{BNZ20_DF} and introduces the idea of using P\'olya's inequality for
	the series $\displaystyle\sum P_n(k)a_kz^k$ for certain polynomials $P_n(z)$. This idea also plays an important role in the proof of Theorem~\ref{thm:main 1} and Theorem~\ref{thm:main 2}.
	
	\subsection{Hankel determinants} 
	Throughout this subsection, let $k$ be a field and let $\ord$ denote the order function on $k((z))$: it is the discrete valuation with uniformizer $z$. For   
 $g(z)\in k((z))$ we use the notation $g=O(z^m)$ to mean that  $\ord g\ge m$.

For a power series $f(z)=\displaystyle\sum_{n=0}^{\infty}a_nz^n\in k[[z]]$ and 
 integers $\ell,m\ge 0$ we define the Hankel matrix  $H_{\ell,m}(f)$  and Hankel determinant $\Delta_{\ell,m}(f)$ by:
\begin{equation*}
H_{\ell,m}(f)=\begin{pmatrix} a_\ell & a_{\ell+1} & \ldots & a_{\ell+m} \\
 a_{\ell+1} & a_{\ell+2} &\ldots  & a_{\ell+m+1}  \\
 \ldots  \\
 a_{\ell+m} & a_{\ell+m+1} &\ldots  & a_{\ell+2m} \end{pmatrix},\qquad \Delta_{\ell,m}(f):=\det H_{\ell,m}(f).
\end{equation*}
 
We begin with the following:
\begin{lemma} \label{lem:H1} 
Let $\ell$ and $m$ be nonnegative integers. Then $\Delta_{\ell,m}(f)=0$ if and only if there exist polynomials $P(z),Q(z)\in k[z]$  with
$\deg P\le \ell+m-1$, $Q(z)\neq 0$, $\deg Q\le m$ and $P(z)-Q(z)f(z)=O(z^{\ell+2m+1})$.
\end{lemma}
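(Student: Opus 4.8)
The plan is to reinterpret the Hankel determinant condition as a statement about linear dependence among the rows of $H_{\ell,m}(f)$ and then translate this into the existence of the polynomials $P(z), Q(z)$. First I would observe that the rows of $H_{\ell,m}(f)$ are the vectors $v_i = (a_{\ell+i}, a_{\ell+i+1}, \ldots, a_{\ell+i+m})$ for $i = 0, 1, \ldots, m$, so $\Delta_{\ell,m}(f) = 0$ if and only if there exist scalars $q_0, \ldots, q_m \in k$, not all zero, with $\sum_{i=0}^m q_i v_i = 0$; equivalently $\sum_{i=0}^m q_i a_{\ell+i+j} = 0$ for all $j = 0, 1, \ldots, m$. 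The natural candidate is then $Q(z) = \sum_{i=0}^m q_i z^i$ (reversed if one prefers, but the unreversed convention works cleanly here), which is a nonzero polynomial of degree at most $m$, and the vanishing relations say precisely that the coefficients of $z^{\ell+m}, z^{\ell+m+1}, \ldots, z^{\ell+2m}$ in the product $Q(z) f(z)$ are all zero.

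The second step is to manufacture $P(z)$ by truncation. Given such a $Q(z)$, let $P(z)$ be the truncation of $Q(z)f(z)$ to degree $\le \ell+m-1$, i.e. $P(z) = \sum_{n \le \ell+m-1} c_n z^n$ where $Q(z)f(z) = \sum_n c_n z^n$. Then $P(z) - Q(z)f(z)$ has no terms of degree $\le \ell+m-1$ by construction, and no terms of degree $\ell+m, \ldots, \ell+2m$ by the vanishing relations established above, hence $P(z) - Q(z)f(z) = O(z^{\ell+2m+1})$; moreover $\deg P \le \ell+m-1$ automatically. This proves the forward direction.

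For the converse, I would start from $P(z), Q(z)$ with the stated degree bounds and $P(z) - Q(z)f(z) = O(z^{\ell+2m+1})$, write $Q(z) = \sum_{i=0}^m q_i z^i$, and extract the coefficient of $z^n$ in $Q(z)f(z)$ for $n = \ell+m, \ell+m+1, \ldots, \ell+2m$. Since $\deg P \le \ell+m-1 < n$ and $Q(z)f(z) - P(z) = O(z^{\ell+2m+1})$ has no terms of degree $n$ in this range, the coefficient of $z^n$ in $Q(z)f(z)$ vanishes, giving $\sum_{i=0}^m q_i a_{n-i} = 0$; reindexing with $n = \ell+m+j$ for $j = 0, \ldots, m$ recovers exactly the linear dependence $\sum_{i=0}^m q_i v_{m-i} = 0$ among the rows, so the $v_i$ are linearly dependent and $\Delta_{\ell,m}(f) = 0$.

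The only genuinely delicate point is bookkeeping with the index conventions—making sure the reversal (or non-reversal) of the coefficient vector of $Q(z)$ matches the row/column indexing of $H_{\ell,m}(f)$, and checking that the range of coefficients $z^{\ell+m}, \ldots, z^{\ell+2m}$ that must vanish in $Q(z)f(z)$ lines up with exactly $m+1$ linear conditions on the $m+1$ unknowns $q_0, \ldots, q_m$. Once the indexing is fixed consistently, both directions are short and formal; I do not anticipate any substantive obstacle beyond this routine verification.
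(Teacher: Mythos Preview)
Your argument is the standard and correct one; the paper itself does not give a proof but simply cites \cite[Lemma~2.1]{BNZ22_DF2}, where presumably the same classical computation is carried out.

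One small inconsistency to fix when you write this up carefully: in the forward direction you assert that with $Q(z)=\sum_i q_i z^i$ ``the unreversed convention works cleanly,'' but in fact the coefficient of $z^{\ell+m+j}$ in $Q(z)f(z)$ is $\sum_i q_i a_{\ell+m+j-i}$, which matches the row relation $\sum_i q_i a_{\ell+i+j}=0$ only after the substitution $i\mapsto m-i$. You implicitly acknowledge this in the converse direction, where you correctly arrive at $\sum_i q_i v_{m-i}=0$, and you explicitly flag the reversal issue in your closing paragraph. So the mathematics is fine; just make the forward direction consistent by either taking $Q(z)=\sum_i q_{m-i}z^i$ from the start or, equivalently, noting that since $H_{\ell,m}(f)$ is symmetric the existence of \emph{some} nontrivial dependence is all that matters and you may relabel the $q_i$ freely.
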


\begin{proof} 
This is \cite[Lemma~2.1]{BNZ22_DF2}
\end{proof}

From now on, we let $\Delta_m(f)$ denote the Hankel determinant $\Delta_{0,m}(f)$. We have the following:
\begin{corollary}\label{cor:H1}
Let $m$ and $d$ be nonnegative integers. If $\Delta_{m+i}(f)=0$ for $0\leq i\leq d$ then
there exist $P(z),Q(z)\in k[z]$ with $\deg P\leq m-1$,
$Q(0)\neq 0$, $\deg Q\leq m$, $\gcd(P,Q)=1$, and
$f(z)-P(z)/Q(z)=O(z^{m+d+1})$.
\end{corollary}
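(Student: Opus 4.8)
The plan is to deduce Corollary~\ref{cor:H1} from Lemma~\ref{lem:H1} by first extracting a single rational approximation with controlled degrees and then bootstrapping the approximation order from $z^{2m+1}$ up to $z^{m+d+1}$ using the additional vanishing hypotheses $\Delta_{m+i}(f)=0$ for $1\le i\le d$.

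First I would apply Lemma~\ref{lem:H1} with $\ell=0$ and the given $m$: since $\Delta_m(f)=\Delta_{0,m}(f)=0$, there exist $P_0(z),Q_0(z)\in k[z]$ with $\deg P_0\le m-1$, $Q_0\ne 0$, $\deg Q_0\le m$, and $P_0(z)-Q_0(z)f(z)=O(z^{2m+1})$. Next, to improve the order of vanishing, I would apply Lemma~\ref{lem:H1} successively with $\ell=1,2,\dots,d$ and the corresponding shrinking value $m$ replaced by $m$ (keeping $\ell+m$ balanced so that $\ell+2m+1$ increases); concretely, the hypothesis $\Delta_{m+i}(f)=0$ should be read as $\Delta_{i, m}(f)$-type information after a shift, or one runs an induction showing that the pair $(P,Q)$ approximating to order $z^{N}$ can be upgraded to order $z^{N+1}$ whenever the relevant Hankel determinant vanishes. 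The uniqueness of the $(m-1,m)$-degree Padé-type approximant (guaranteed because once an approximation of order $\ge 2m$ exists with these degree bounds it is forced) ensures all these approximants coincide, so a single pair $(P,Q)$ works for all orders up to $z^{m+d+1}$.

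Then I would clean up: write $P/Q$ in lowest terms as $\tilde P/\tilde Q$ with $\gcd(\tilde P,\tilde Q)=1$; the degree bounds $\deg\tilde P\le m-1$, $\deg\tilde Q\le m$ are inherited, and since $f(z)\in k[[z]]$ is a power series with $f(z)-P(z)/Q(z)=O(z^{m+d+1})$ (so in particular $P/Q$ has no pole at $0$), we get $\tilde Q(0)\ne 0$ after possibly rescaling so that $\tilde Q(0)$ is normalized. Finally, $f(z)-\tilde P(z)/\tilde Q(z)=O(z^{m+d+1})$ is exactly the claimed conclusion.

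The main obstacle I anticipate is the bootstrapping step: making precise how the single vanishing $\Delta_m(f)=0$ combined with $\Delta_{m+1}(f)=\cdots=\Delta_{m+d}(f)=0$ forces one fixed rational function to approximate $f$ to the higher order $z^{m+d+1}$, rather than merely producing a possibly different pair $(P,Q)$ at each stage with degrees that might drift. The key technical point is a uniqueness/rigidity lemma for these low-degree rational approximations: if $P_1/Q_1$ and $P_2/Q_2$ both have $\deg P_i\le m-1$, $\deg Q_i\le m$, and both agree with $f$ to order $\ge z^{2m}$, then $P_1Q_2-P_2Q_1$ has degree $\le 2m-1$ yet vanishes to order $\ge 2m$, hence is zero, so the approximants are equal. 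Granting this, the induction is routine: Lemma~\ref{lem:H1} applied with suitable $\ell$ at each step produces an approximant of the next higher order which, by uniqueness, must be the original one, and iterating $d$ times gives approximation to order $z^{m+d+1}$.
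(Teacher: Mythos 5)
Your overall strategy---extract a rational approximant from each vanishing Hankel determinant, show by a rigidity argument that they all coincide, then pass to lowest terms---is the right one (the paper itself does not prove this statement but simply cites \cite{BNZ22_DF2}). However, your bootstrapping step has two concrete problems. First, by the paper's convention $\Delta_{m+i}(f)$ means $\Delta_{0,m+i}(f)$: the anchor stays at $\ell=0$ and it is the \emph{size} of the Hankel matrix that grows. Your suggestion to read the hypothesis as ``$\Delta_{i,m}(f)$-type information after a shift'' and to apply Lemma~\ref{lem:H1} with $\ell=1,\dots,d$ is a misreading; the correct application is with $\ell=0$ and $m$ replaced by $m+i$, which produces $P_i,Q_i$ with $\deg P_i\le m+i-1$, $\deg Q_i\le m+i$, $Q_i\ne 0$, and $P_i-Q_if=O(z^{2m+2i+1})$. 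Second, precisely because these degree bounds grow with $i$, the uniqueness lemma you formulate (both approximants of degrees $\le m-1$ and $\le m$, agreeing with $f$ to order $\ge 2m$) does not apply to the pairs you actually obtain for $i\ge 1$, so as written nothing forces the approximants to coincide.

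The repair is to run your cross-multiplication argument between \emph{consecutive} stages with the correct bookkeeping: $P_iQ_{i+1}-P_{i+1}Q_i$ is a polynomial of degree at most $2m+2i$, while
$$P_iQ_{i+1}-P_{i+1}Q_i=(P_i-Q_if)Q_{i+1}-(P_{i+1}-Q_{i+1}f)Q_i=O(z^{2m+2i+1}),$$
so it vanishes and $P_i/Q_i=P_{i+1}/Q_{i+1}$ as rational functions for every $i$. Writing the common value in lowest terms as $P/Q$ with $\gcd(P,Q)=1$, one gets $\deg P\le m-1$ and $\deg Q\le m$ because $PQ_0=P_0Q$ forces $P\mid P_0$ and $Q\mid Q_0$. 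For the order of approximation, write $P_d=gP$ and $Q_d=gQ$ with $\deg g\le m+d$; then $g\cdot(P-Qf)=O(z^{2m+2d+1})$ gives $P-Qf=O(z^{2m+2d+1-\ord(g)})=O(z^{m+d+1})$. This loss of up to $m+d$ orders when clearing the common factor is exactly why the conclusion is only $O(z^{m+d+1})$ rather than $O(z^{2m+2d+1})$, a point your write-up glosses over. Finally $Q(0)\ne 0$ follows as you indicate: if $z\mid Q$ then $z\nmid P$ by coprimality, contradicting the constant-term identity $P(0)=Q(0)f(0)$ extracted from $P-Qf=O(z^{m+d+1})$; and once $Q(0)\ne 0$ one may divide to get $f-P/Q=O(z^{m+d+1})$. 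With these corrections your argument is complete.
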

\begin{proof}
This is \cite[Corollary~2.2]{BNZ22_DF2}.
\end{proof}

%

\subsection{P\'olya's inequality}
In this subsection, we let $G\subsetneq \C$ be a simply connected domain containing $0$ with conformal radius $\rho>1$ from the origin. This means
we have a (unique) conformal map $\varphi$ from $G$ onto
the disk $D(0,\rho)$ with $\varphi(0)=0$ and $\varphi'(0)=1$. Let $r\in (1,\rho)$ and let
$\Gamma=\varphi^{-1}(\partial D(0,r))$. A more general version of P\'olya's inequality treats functions with
isolated singularities, the following version
for analytic functions is sufficient for our purpose:
 
\begin{theorem}[P\'olya's inequality]\label{thm:Polya}
Let $f(z)$ be an analytic function on $G$ with the Taylor series $f(z)=\sum_{n=0}^{\infty} a_n z^n$ at $0$.  Then there 
exists a constant $C>0$ depending only
on the data $(G,r)$ such that 
$$\vert \Delta_n(f)\vert\le (n+1)! (C\Vert f\Vert_{\Gamma})^{n+1}r^{-n(n+1)}\quad \text{for every $n\in\N_0$,}$$ where $\Vert f\Vert_{\Gamma}:=\max\{\vert f(z)\vert:\ z\in\Gamma\}$.  
\end{theorem}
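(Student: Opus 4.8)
The plan is to represent $\Delta_n(f)$ as a multiple contour integral, bound its modulus by the determinant of a Gram matrix, recognize that determinant as a product of squared norms of monic orthogonal polynomials, and control those norms through the logarithmic capacity of an auxiliary compact set, which will turn out to be exactly $1/r$.

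First, writing $\psi:=\varphi^{-1}$ and $G_r:=\varphi^{-1}(D(0,r))$ (so that $\overline{G_r}=\psi(\overline{D(0,r)})\subset G$ and $\partial G_r=\Gamma$), I would use Cauchy's formula $a_m=\frac{1}{2\pi i}\oint_\Gamma f(z)z^{-m-1}\,dz$ together with Andr\'eief's (Heine's) identity applied to the complex measure $d\mu(z)=\frac{1}{2\pi i}\,z^{-1}f(z)\,dz$ to obtain
$$\Delta_n(f)=\det(a_{i+j})_{0\le i,j\le n}=\frac{1}{(n+1)!\,(2\pi i)^{n+1}}\int_{\Gamma^{n+1}}\det\bigl(z_i^{-j}\bigr)_{0\le i,j\le n}^{\,2}\ \prod_{i=0}^n z_i^{-1}f(z_i)\,dz_i .$$
Passing to moduli, bounding $|f|\le\Vert f\Vert_\Gamma$ on $\Gamma$, and applying Andr\'eief once more, now to the \emph{positive} measure $d\nu(z)=|dz|/|z|$ on $\Gamma$, gives
$$|\Delta_n(f)|\ \le\ \frac{\Vert f\Vert_\Gamma^{\,n+1}}{(2\pi)^{n+1}}\ \det M,\qquad M=\Bigl(\int_\Gamma z^{-j}\,\overline{z^{-k}}\,\tfrac{|dz|}{|z|}\Bigr)_{0\le j,k\le n},$$
where $M$ is the Gram matrix of $1,z^{-1},\dots,z^{-n}$ in $L^2(\Gamma,\nu)$. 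The routine parts here are the two invocations of Andr\'eief's identity (each is just Leibniz expansion of the determinants followed by term-by-term integration) and the elementary fact that a Gram determinant equals the product of the squared distances arising in Gram--Schmidt.

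The heart of the argument is the evaluation of $\det M$. Under $z\mapsto\zeta=1/z$ the matrix $M$ becomes the Gram matrix of $1,\zeta,\dots,\zeta^n$ for the pushforward measure $\widetilde\nu$ on the curve $\widetilde\Gamma=1/\Gamma$, so $\det M=\prod_{j=0}^n\Vert q_j\Vert_{L^2(\widetilde\nu)}^{\,2}$, where $q_j$ is the monic degree-$j$ orthogonal polynomial for $\widetilde\nu$. Because $q_j$ minimizes the $L^2(\widetilde\nu)$-norm among monic degree-$j$ polynomials, $\Vert q_j\Vert_{L^2(\widetilde\nu)}\le\Vert p\Vert_{L^\infty(\widetilde\Gamma)}\,\widetilde\nu(\widetilde\Gamma)^{1/2}$ for \emph{every} monic $p$ of degree $j$. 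Now $\widetilde\Gamma$ bounds the simply connected domain $\widetilde G:=1/G_r$, which contains $\infty$, so $K:=\widehat{\C}\setminus\widetilde G$ is a compact continuum in $\C$ with analytic boundary $\widetilde\Gamma$; and $v\mapsto 1/\psi(r/v)$ is precisely the conformal map of $\{|v|>1\}$ onto $\widetilde G$, with expansion $v/r+O(1)$ near $\infty$, so the logarithmic capacity of $K$ equals $1/r$. Taking for $p$ the monic Faber polynomial $\widehat\Phi_j$ of $K$ and using the standard estimate $\Vert\widehat\Phi_j\Vert_{L^\infty(K)}\le C\,\operatorname{cap}(K)^j=C\,r^{-j}$ — valid with $C=C(G,r)$ precisely because $\partial K$ is a real-analytic Jordan curve (which it is, since $r<\rho$ makes $\psi$ analytic on $\overline{D(0,r)}$ with nonvanishing derivative on $|w|=r$) — we get $\Vert q_j\Vert_{L^2(\widetilde\nu)}^{\,2}\le C^2\widetilde\nu(\widetilde\Gamma)\,r^{-2j}$. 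Multiplying over $j=0,\dots,n$ yields $\det M\le\bigl(C^2\widetilde\nu(\widetilde\Gamma)\bigr)^{n+1}r^{-n(n+1)}$, whence
$$|\Delta_n(f)|\ \le\ \Bigl(\tfrac{C^2\widetilde\nu(\widetilde\Gamma)}{2\pi}\,\Vert f\Vert_\Gamma\Bigr)^{n+1}r^{-n(n+1)},$$
which gives the asserted inequality with a constant depending only on $(G,r)$ (even without the factor $(n+1)!$, which one may keep as harmless slack).

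The main obstacle is the transfer to the $\zeta$-plane: identifying $\widetilde\Gamma$ as the boundary of $1/G_r$, carrying out the capacity computation $\operatorname{cap}(K)=1/r$ via the exterior uniformizer, and then supplying monic degree-$j$ polynomials with sup-norm $O(r^{-j})$ on $K$. The one external ingredient I would quote rather than reprove is the uniform bound on the (suitably normalized) Faber polynomials of a continuum with analytic boundary; this is exactly where the hypothesis $r<\rho$ enters essentially, since it is what makes $\Gamma$ — hence $\widetilde\Gamma=\partial K$ — real-analytic. Everything else is bookkeeping with contour integrals and determinants.
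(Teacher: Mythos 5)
Your argument is correct, but note first that the paper does not actually prove this statement: it only cites Bieberbach's book for the more general version allowing isolated singularities. Measured against that classical proof, yours is a genuine variant. P\'olya's original route (the one in the cited reference) stops after your first step --- the representation $\Delta_n(f)=\frac{1}{(n+1)!(2\pi i)^{n+1}}\int_{\Gamma^{n+1}}\det(z_i^{-j})^2\prod_i z_i^{-1}f(z_i)\,dz_i$ --- and then bounds the Vandermonde-type factor pointwise on $\Gamma^{n+1}$ by the $(n+1)$-st diameter of the relevant compact set, using the convergence of these diameters to the transfinite diameter (capacity). You instead apply Andr\'eief a second time to convert the modulus bound into a Gram determinant, factor it as a product of squared $L^2$-norms of monic orthogonal polynomials, and beat each norm with a monic Faber polynomial. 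The two approaches meet at the same point --- the identification $\operatorname{cap}(K)=1/r$ via the exterior uniformizer $v\mapsto 1/\psi(r/v)$, which you carry out correctly from $\psi(0)=0$, $\psi'(0)=1$ --- but yours trades the transfinite-diameter limit for the quoted uniform sup-norm bound on Faber polynomials of a continuum with analytic boundary, and you rightly flag that $r<\rho$ is exactly what makes $\partial K$ real-analytic so that this bound applies. The individual steps all check out: the two Andr\'eief expansions, the change of variable $\zeta=1/z$ (under which $|dz|/|z|$ even pushes forward to $|d\zeta|/|\zeta|$), the minimality characterization of monic orthogonal polynomials, and the maximum principle identifying $\Vert p\Vert_{L^\infty(\widetilde\Gamma)}$ with $\Vert p\Vert_{L^\infty(K)}$. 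The payoff of your route is a cleaner constant --- you do not even need the $(n+1)!$ slack in the statement --- at the cost of one quoted external ingredient, which is no worse than the paper's wholesale citation.
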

\begin{proof} The more general version in which $f$ can have isolated singularities is proved in \cite[pp.~121--124]{Bie55_AF}. 
\end{proof}

We need the following estimate when applying P\'olya's inequality for certain auxiliary functions $g$ constructed from $f$:
\begin{corollary}\label{cor:cor of Polya's inequality}
Let $f(z)$ be an analytic function on $G$ with the Taylor series $f(z)=\sum_{n=0}^{\infty} a_n z^n$ at $0$. There exist  constants $C_1>0$
depending on $(G,r,f)$ and $C_2\ge 1$ depending 
on $(G,r)$ such that 
for every $d\ge 0$ and $c_0,\ldots ,c_d\in \mathbb{C}$, we have
$$\vert\Delta_n(g)\vert \le (n+1)! C_1^{n+1} \left( \sum_{j=0}^d |c_j| j! C_2^{j}\right)^{n+1} r^{-n(n+1)}$$ for every $n\in \N_0$, where
$g(z):= \sum_{j=0}^d c_j z^j f^{(j)}(z)$.
\end{corollary}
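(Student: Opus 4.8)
The plan is to apply P\'olya's inequality (Theorem~\ref{thm:Polya}) directly to $g$, after first bounding $\Vert g\Vert_{\Gamma}$ in terms of the $c_j$'s. Since $f$ is analytic on $G$ and each $z^j$ is entire, $g$ is analytic on $G$, so Theorem~\ref{thm:Polya} provides a constant $C>0$ depending only on $(G,r)$ with $\vert\Delta_n(g)\vert\le (n+1)!\,(C\Vert g\Vert_{\Gamma})^{n+1}r^{-n(n+1)}$ for every $n\in\N_0$. It therefore suffices to produce a bound of the shape $\Vert g\Vert_{\Gamma}\le A\sum_{j=0}^d|c_j|\,j!\,C_2^{\,j}$ with $A$ depending on $(G,r,f)$ and $C_2\ge 1$ depending only on $(G,r)$; then $C_1:=CA$ does the job.

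The one ingredient needed is a Cauchy-type estimate for the derivatives $f^{(j)}$ on $\Gamma$. Fix once and for all a radius $r'\in(r,\rho)$ depending only on $(G,r)$, and set $\Gamma'=\varphi^{-1}(\partial D(0,r'))$; this is a compact subset of $G$ whose enclosed region contains $\Gamma$. Put $R:=\max\{\vert z\vert:\ z\in\Gamma\}$ and $\delta:=\operatorname{dist}(\Gamma,\Gamma')>0$, both depending only on $(G,r)$. By the Cauchy integral formula for derivatives along $\Gamma'$, for every $z\in\Gamma$ we get $\vert f^{(j)}(z)\vert\le j!\,\Vert f\Vert_{\Gamma'}\,\length(\Gamma')\big/(2\pi\delta^{j+1})$, so that $\Vert f^{(j)}\Vert_{\Gamma}\le j!\,A\,\delta^{-j}$ where $A:=\Vert f\Vert_{\Gamma'}\,\length(\Gamma')\big/(2\pi\delta)$ depends on $(G,r,f)$. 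Consequently $\Vert z^j f^{(j)}\Vert_{\Gamma}\le R^j\Vert f^{(j)}\Vert_{\Gamma}\le A\,j!\,(R/\delta)^j$.

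Summing with the triangle inequality, $\Vert g\Vert_{\Gamma}\le\sum_{j=0}^d\vert c_j\vert\,\Vert z^j f^{(j)}\Vert_{\Gamma}\le A\sum_{j=0}^d\vert c_j\vert\,j!\,(R/\delta)^j$. Taking $C_2:=\max(1,R/\delta)\ge 1$, which depends only on $(G,r)$, gives $(R/\delta)^j\le C_2^{\,j}$ and hence $\Vert g\Vert_{\Gamma}\le A\sum_{j=0}^d\vert c_j\vert\,j!\,C_2^{\,j}$. Substituting this into the P\'olya bound above with $C_1:=CA$ yields precisely the claimed inequality. I do not expect a genuine obstacle here; the only point demanding a little care is keeping the auxiliary contour $\Gamma'$ — and thus $R$, $\delta$, and $C_2$ — chosen independently of $f$, with only the factor $\Vert f\Vert_{\Gamma'}$ (absorbed into $C_1$) carrying the dependence on $f$, so that the dependence of constants matches the statement.
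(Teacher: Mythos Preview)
Your proof is correct. The paper does not supply its own argument here but simply cites \cite[Corollary~2.7]{BNZ22_DF2}; your approach---applying Theorem~\ref{thm:Polya} to $g$ and bounding $\Vert g\Vert_{\Gamma}$ via Cauchy estimates for $f^{(j)}$ on an auxiliary contour $\Gamma'$ chosen depending only on $(G,r)$---is the natural route and almost certainly what the cited reference does.
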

\begin{proof}
This is \cite[Corollary~2.7]{BNZ22_DF2}. 
\end{proof}

For a polynomial $P(z)\in\C[z]$ of degree $d\geq 0$, we express 
$$P(z)=\sum_{i=0}^d \alpha_i \binom{z}{i}$$
for unique $\alpha_0,\ldots,\alpha_d\in \C$ so that
$$\sum_{n=0}^{\infty} P(n)a_nz^n=\sum_{i=0}^d \frac{\alpha_i}{i!}z^i f^{(i)}(z)$$
where $f(z)=\displaystyle\sum_{n=0}^{\infty} a_nz^n$. In order to apply Corollary~\ref{cor:cor of Polya's inequality}, we will use the following crude estimates for the $\vert\alpha_i\vert$'s:
\begin{lemma}\label{lem:estimate alpha_i}
Let $P(z)\in \C[z]$ with $d:=\deg(P)\geq 0$. Let $M=\max\{\vert P(i)\vert:\ i=0,1,\ldots,d\}$. Express
$P(z)=\displaystyle\sum_{i=0}^d\alpha_i\binom{z}{i}$, then we have $\vert \alpha_i\vert\leq M2^ii!$ for $0\leq i\leq d$.
\end{lemma}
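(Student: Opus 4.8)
The plan is to recover the coefficients $\alpha_i$ from the values $P(0),P(1),\ldots,P(d)$ by the standard finite-difference formula and then bound each term crudely. First I would recall that if $P(z)=\sum_{i=0}^d\alpha_i\binom{z}{i}$, then applying the forward difference operator $\Delta$ repeatedly and evaluating at $0$ gives $\alpha_i=(\Delta^iP)(0)$; concretely,
\begin{equation*}
\alpha_i=\sum_{j=0}^i(-1)^{i-j}\binom{i}{j}P(j).
\end{equation*}
This is the key identity — everything else is estimation. I would verify it quickly using $\binom{z}{i}$ being the unique basis with $(\Delta^k\binom{z}{i})(0)=\delta_{ik}$, or equivalently by the known inversion between the monomial-like basis $\{\binom{z}{i}\}$ and point evaluations.

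Next I would simply take absolute values: since $|P(j)|\le M$ for $0\le j\le d$ and in particular for $0\le j\le i$, we get
\begin{equation*}
|\alpha_i|\le M\sum_{j=0}^i\binom{i}{j}=M\,2^i.
\end{equation*}
This already gives $|\alpha_i|\le M2^i$, which is even a bit stronger than the claimed $|\alpha_i|\le M2^ii!$ since $i!\ge 1$; so the stated inequality follows a fortiori. I would note that the extra factor $i!$ in the statement is harmless slack, presumably inserted to match the shape of the bound in Corollary~\ref{cor:cor of Polya's inequality} where factorials appear anyway, so there is no need to chase a sharper constant.

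There is essentially no obstacle here: the only thing to be careful about is stating the finite-difference inversion correctly (the sign pattern and the fact that $\alpha_i$ depends only on $P(0),\ldots,P(i)$, not on all of $P(0),\ldots,P(d)$), and making sure the degree bookkeeping is right so that the $\binom{z}{i}$ expansion terminates at $i=d$. If one prefers to avoid quoting the difference-operator fact, an alternative is to set up the linear system $P(k)=\sum_{i=0}^k\alpha_i\binom{k}{i}$ for $k=0,1,\ldots,d$, observe the coefficient matrix is lower-triangular with $1$'s on the diagonal, and solve by forward substitution, proving $|\alpha_i|\le M2^i$ by induction on $i$; the inductive step uses $\sum_{i=0}^{k-1}\binom{k}{i}2^i\le\sum_{i=0}^{k}\binom{k}{i}2^i=3^k$, which is a weaker but still more than sufficient bound. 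Either route is short and routine.
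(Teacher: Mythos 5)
Your main argument is correct and takes a genuinely different route from the paper. You invert the binomial basis in closed form via the forward-difference identity $\alpha_i=(\Delta^iP)(0)=\sum_{j=0}^i(-1)^{i-j}\binom{i}{j}P(j)$ and then bound termwise, which yields $|\alpha_i|\le M2^i$ — strictly stronger than the stated $M2^ii!$, and with no induction needed. The paper instead proves the bound by induction on $i$, writing $\alpha_{k+1}=P(k+1)-\sum_{i=0}^k\alpha_i\binom{k+1}{i}$ and feeding the inductive bound $|\alpha_i|\le M2^ii!$ back in; there the factorial is not cosmetic slack but is exactly what makes the induction close, since $\sum_{i=0}^k 2^i i!\binom{k+1}{i}$ can be absorbed into $2^{k+1}(k+1)!$ but not into $2^{k+1}$. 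Your closed-form approach buys a cleaner and sharper constant; the paper's approach avoids quoting the difference-operator inversion and stays self-contained. Either bound is more than enough for the application in the proof of Theorem~\ref{thm:main 1}, where the estimate is swallowed by $C_3^{2n}$.

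One caveat: your optional fallback at the end does not work as stated. If you try to prove $|\alpha_i|\le M2^i$ by forward substitution and induction, the step gives $|\alpha_k|\le M+M\sum_{i=0}^{k-1}2^i\binom{k}{i}\le M(1+3^k-2^k)$, which exceeds $M2^k$ already for $k=2$; the bound $3^k$ is not ``more than sufficient'' for the target $M2^i$. That inductive route only closes if you keep the factorial in the induction hypothesis, i.e.\ if you prove the paper's weaker bound $M2^ii!$. Since your primary argument is complete, this does not affect the correctness of the proof, but you should either drop the aside or state the correct inductive target.
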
	
\begin{proof}
We may assume $M=1$ and proceed by induction on $i$. The cases $i=0,1$ are immediate since $\alpha_0=P(0)$
and $\alpha_1=P(1)-P(0)$. Suppose the inequality holds for $0\leq i\leq k$ with $1\leq k\leq d-1$. From the induction hypothesis, we have:
\begin{align*}
\alpha_{k+1}&=P(k+1)-\sum_{i=0}^k\alpha_i\binom{k+1}{i}\\
\vert\alpha_{k+1}\vert&\leq 1+\sum_{i=0}^k 2^i i!\binom{k+1}{i}\\
&=2^{k+1}(k+1)!\left(\frac{1}{2^{k+1}(k+1)!}+\sum_{i=0}^k\frac{1}{2^{k+1-i}(k+1-i)!}\right)\\
&<2^{k+1}(k+1)!\left(\frac{1}{8}+e^{1/2}-1\right)<2^{k+1}(k+1)!
\end{align*}
\end{proof}		
		
\subsection{Polynomial-exponential sequences}
\begin{definition}\label{def:pol-exp sequence}
Let $M\leq N$ be integers and let $(u_n)_{n=M}^N$ be a sequence of $N-M+1$ algebraic numbers. 
\begin{itemize}
\item [(a)] Let $r\in\N_0$. The sequence $(u_n)_{n=M}^N$ is called a polynomial-exponential sequence of rank $r$ if there exist $s\in\N_0$, distinct non-zero algebraic numbers $\alpha_1,\ldots,\alpha_s$, and non-zero polynomials $P_1(z),\ldots,P_s(z)\in\Qbar[z]$ such that
\begin{equation}\label{eq:u_n=sum P_i(n)alpha_i^n}
u_n=P_1(n)\alpha_1^n+\cdots+P_s(n)\alpha_s^n\quad \text{for $M\leq n\leq N$}
\end{equation}
and $\displaystyle s+\sum_{i=1}^s \deg(P_i)=r$.
\item [(b)] Let $r\in\N_0$. The sequence $(u_n)_{n=M}^N$ is called a proper polynomial-exponential sequence
of rank $r$ if it is a polynomial-exponential sequence of rank $r$ and $\displaystyle r\leq \frac{N-M+1}{2}$.
\end{itemize}
\end{definition}

\begin{remark}
In Definition~\ref{def:pol-exp sequence}(a), the case $r=0$ means that $s=0$ and the data $(\alpha_1,\ldots,\alpha_s,P_1,\ldots,P_s)$ is empty. Then \eqref{eq:u_n=sum P_i(n)alpha_i^n} means $u_n=0$ for $M\leq n\leq N$. In other words, a sequence is a (proper) polynomial-exponential sequence of rank $0$
if and only if every member of the sequence is $0$.
\end{remark}

\begin{example}
Consider the sequence $u_1=u_2=u_3=u_4=2022$. Let $d\geq 3$ and let $P(z)\in\Qbar[z]$ be a polynomial of degree $d$ such that $P(1)=P(2)=P(3)=P(4)=2022$. From $u_n=P(n)\cdot 1^n$ for $1\leq n\leq 4$ we have 
that the given sequence is a polynomial-exponential sequence of rank $d+1$. We can also take the constant polynomial $P(z)=2022$ and have that the given sequence is a \emph{proper} polynomial-exponential sequence of rank $1$.
\end{example}
		
\begin{lemma}\label{lem:uniqueness}
Let $M\leq N$ be integers and let $(u_n)_{n=M}^N$ be a sequence of algebraic numbers. Suppose $(u_n)_{n=M}^N$ is a polynomial-exponential sequence of rank $r$ and let $s\in\N_0$, distinct non-zero algebraic numbers 
$\alpha_1,\ldots,\alpha_s$, and non-zero polynomials $P_1,\ldots,P_s\in\Qbar[z]$ such that
$$u_n=P_1(n)\alpha_1^n+\cdots+P_s(n)\alpha_s^n\quad \text{for $M\leq n\leq N$}$$
and $s+\displaystyle \sum_{i=1}^s\deg(P_i)=r$. Suppose $(u_n)_{n=M}^N$ is a polynomial-exponential sequence of rank $\tilde{r}$ and let $\tilde{s}\in\N_0$, distinct non-zero algebraic numbers 
$\tilde{\alpha}_1,\ldots,\tilde{\alpha}_s$, and non-zero polynomials $\tilde{P}_1,\ldots,\tilde{P}_s\in\Qbar[z]$ such that
$$u_n=\tilde{P}_1(n)\tilde{\alpha}_1^n+\cdots+\tilde{P}_s(n)\tilde{\alpha}_s^n\quad \text{for $M\leq n\leq N$}$$
and $\tilde{s}+\displaystyle \sum_{i=1}^s\deg(\tilde{P}_i)=\tilde{r}$.
If $r+\tilde{r}\leq N-M+1$ then $r=\tilde{r}$, $s=\tilde{s}$, and up to rearrangement the pairs $(\alpha_i,P_i)$'s for $1\leq i\leq s$ coincide with the pairs $(\tilde{\alpha}_i,\tilde{P}_i)$'s for $1\leq i\leq \tilde{s}$.
\end{lemma}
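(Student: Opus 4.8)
The plan is to subtract the two given representations, collect the resulting terms according to their exponential base, and thereby reduce the lemma to the assertion that a polynomial-exponential sequence whose rank is at most $N-M+1$ and which vanishes on the $N-M+1$ consecutive integers $M,\ldots,N$ must be the zero sequence. That assertion is the finite analogue of the linear independence of the functions $n\mapsto\binom{n}{j}\beta^n$ for distinct nonzero $\beta$ and $j\ge 0$, and it is where essentially all of the content lies.

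\emph{Combining the two representations.} First I would let $\beta_1,\ldots,\beta_t$ be the distinct elements of $\{\alpha_1,\ldots,\alpha_s\}\cup\{\tilde\alpha_1,\ldots,\tilde\alpha_{\tilde s}\}$, and for each $k$ set $R_k(z)\in\Qbar[z]$ to be (the polynomial $P_i$ with $\alpha_i=\beta_k$, or $0$ if there is none) minus (the polynomial $\tilde P_j$ with $\tilde\alpha_j=\beta_k$, or $0$ if there is none). Since both representations agree with $u_n$ on $[M,N]$, we get $\sum_{k=1}^t R_k(n)\beta_k^n=0$ for $M\le n\le N$. Writing $\rho:=\sum_{k:\,R_k\neq 0}(1+\deg R_k)$, a term-by-term count gives $\rho\le r+\tilde r$: a base $\beta_k$ occurring in only one of the two lists contributes exactly the matching $1+\deg P_i$ (resp. $1+\deg\tilde P_j$), while a base $\beta_k=\alpha_i=\tilde\alpha_j$ occurring in both, when $R_k\neq 0$, contributes $1+\deg(P_i-\tilde P_j)\le(1+\deg P_i)+(1+\deg\tilde P_j)$; and each index of the first (resp. second) list is used at most once in this count, so the totals are bounded by $r$ and $\tilde r$ respectively. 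Hence $\rho\le r+\tilde r\le N-M+1$.

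\emph{Killing the combined sequence.} Next I would invoke the standard fact that for distinct nonzero algebraic numbers $\beta_1,\ldots,\beta_t$ and integers $m_k\ge 1$, a sequence $w_n=\sum_k R_k(n)\beta_k^n$ with $\deg R_k<m_k$ is annihilated by the linear recurrence with characteristic polynomial $g(z)=\prod_k(z-\beta_k)^{m_k}$, which is monic of degree $D:=\sum_k m_k$ with $g(0)\neq 0$; therefore the values of $(w_n)_{n\in\Z}$ on any block of $D$ consecutive integers determine it completely (one may propagate both forwards and backwards), and in particular $w_n=0$ on such a block forces $w_n=0$ for all $n$, hence every $R_k\equiv 0$ by linear independence of the $\binom{n}{j}\beta_k^n$ on $\Z$. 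Applying this with $m_k=1+\deg R_k$ over the indices with $R_k\neq 0$ — so $D=\rho\le N-M+1$ — and the block $\{M,\ldots,N\}$, I conclude $R_k\equiv 0$ for every $k$.

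\emph{Conclusion.} Finally, because every $P_i$ and every $\tilde P_j$ is nonzero, $R_k\equiv 0$ rules out a base $\beta_k$ lying in only one of the two lists; hence $\{\alpha_1,\ldots,\alpha_s\}=\{\tilde\alpha_1,\ldots,\tilde\alpha_{\tilde s}\}$, so $s=\tilde s$, and after a permutation $\alpha_i=\tilde\alpha_i$, whereupon $R_k\equiv 0$ yields $P_i=\tilde P_i$ for each $i$. Then $r=\sum_i(1+\deg P_i)=\sum_i(1+\deg\tilde P_i)=\tilde r$, and the pairs $(\alpha_i,P_i)$ coincide with the $(\tilde\alpha_i,\tilde P_i)$, as claimed. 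The only delicate point is the recurrence/linear-independence step, and the hypothesis $r+\tilde r\le N-M+1$ is used exactly once, to guarantee that the block $\{M,\ldots,N\}$ is long enough for that step — which is also the precise threshold below which two genuinely different polynomial-exponential representations of one finite sequence can occur.
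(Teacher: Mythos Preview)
Your proof is correct and follows essentially the same approach as the paper: form the difference $w_n$, observe it satisfies a constant-coefficient linear recurrence of order at most $r+\tilde r$ (the paper cites this directly, while you collect by base and compute the slightly sharper bound $\rho$), propagate the vanishing on $[M,N]$ forwards and backwards to get $w_n=0$ for all $n\in\Z$, and then invoke linear independence of the functions $n\mapsto n^j\beta^n$ to match the data. The only cosmetic difference is that the paper does not bother to merge common bases before citing the recurrence length bound, whereas you do; both routes reach the same conclusion by the same mechanism.
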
		
\begin{proof}
Put $w_n=\displaystyle\sum_{i=1}^s P_i(n)\alpha_i^n-\sum_{i=1}^{\tilde{s}}\tilde{P}_i(n)\tilde{\alpha}_i^n$ for $n\in\Z$. It is well-known (see \cite[p.~174]{Sch03_LR}) that $(w_n)_{n\in\Z}$ is a linear recurrence sequence of recurrence length at most $r+\tilde{r}$ with constant coefficients: there exist algebraic numbers $c_i$'s for
$0\leq i\leq r+\tilde{r}-1$ such that
\begin{equation}\label{eq:w_n recurrence}
w_n=\sum_{i=0}^{r+\tilde{r}-1}c_iw_{n+i}\quad \text{for every integer $n$.}
\end{equation}
Since $w_n=0$ for $M\leq n\leq N$ and $r+\tilde{r}\leq N-M+1$, we can run the recurrence \eqref{eq:w_n recurrence} forward and backward to conclude that $w_n=0$ for every integer $n$. Then 
\cite[Lemma~2.2]{Sch03_LR} gives that $s=\tilde{s}$ and the $(\alpha_i,P_i)$'s coincide with the
$(\tilde{\alpha}_i,\tilde{P}_i)$'s. It follows that $r=\tilde{r}$.
\end{proof}
		
\begin{corollary}\label{cor:uniqueness}
Let $M\leq N$ be integers and let $(u_n)_{n=M}^N$ be both a proper polynomial-exponential sequence of rank $r$ and a proper polynomial-exponential sequence of rank $\tilde{r}$. Then $r=\tilde{r}$. The number $s$ and the pairs $(\alpha_i,P_i)$'s for $1\leq i\leq s$ in Definition~\ref{def:pol-exp sequence} are unique up to rearrangement.
\end{corollary}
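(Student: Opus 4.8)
The plan is to derive this directly from Lemma~\ref{lem:uniqueness}; the only thing to check is that the word ``proper'' encodes exactly the numerical hypothesis that lemma requires. So suppose $(u_n)_{n=M}^N$ is expressed both as $\sum_{i=1}^s P_i(n)\alpha_i^n$ with $s+\sum_{i=1}^s\deg(P_i)=r$ and as $\sum_{i=1}^{\tilde s}\tilde P_i(n)\tilde\alpha_i^n$ with $\tilde s+\sum_{i=1}^{\tilde s}\deg(\tilde P_i)=\tilde r$, and that both of these are proper polynomial-exponential sequences in the sense of Definition~\ref{def:pol-exp sequence}(b). Then by that definition $r\le\frac{N-M+1}{2}$ and $\tilde r\le\frac{N-M+1}{2}$, hence $r+\tilde r\le N-M+1$.

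With this inequality available, Lemma~\ref{lem:uniqueness} applies verbatim and gives $r=\tilde r$, $s=\tilde s$, and that after a suitable reindexing the pairs $(\alpha_i,P_i)$ for $1\le i\le s$ coincide with the pairs $(\tilde\alpha_i,\tilde P_i)$ for $1\le i\le\tilde s$. This is precisely the assertion of the corollary: the rank of a proper polynomial-exponential sequence does not depend on the chosen representation, and neither does $s$ nor the collection of pairs $(\alpha_i,P_i)$, up to rearrangement.

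The degenerate case deserves one sentence: if $r=0$ (or $\tilde r=0$) then by the remark following Definition~\ref{def:pol-exp sequence} we have $u_n=0$ for all $M\le n\le N$, and the other representation must then also have rank $0$ with empty data, which is consistent with the conclusion. Beyond this, there is no real obstacle here — the mathematical content is entirely carried by Lemma~\ref{lem:uniqueness}, and the point of introducing the notion ``proper'' is exactly that it converts the joint condition $r+\tilde r\le N-M+1$ into a condition imposed separately on each representation, thereby making ``the rank'' and ``the data'' intrinsic invariants of the sequence itself rather than of a particular way of writing it.
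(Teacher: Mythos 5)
Your proof is correct and is essentially identical to the paper's: both deduce $r+\tilde r\le N-M+1$ from the two properness bounds $r,\tilde r\le (N-M+1)/2$ and then invoke Lemma~\ref{lem:uniqueness}. The extra remark on the rank-$0$ case is harmless but not needed.
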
		
\begin{proof}
We have $r+\tilde{r}\leq N-M+1$ since $r,\tilde{r}\leq (N-M+1)/2$. Then the corollary follows immediately from Lemma~\ref{lem:uniqueness}.
\end{proof}

\begin{remark}
One can use the equivalence notion of linear recurrence sequences with constant coefficients. Instead of the rank $r$ (which is well-defined in the proper case thanks to Corollary~\ref{cor:uniqueness}), one can use the length of a minimal recurrence relation. This is just a matter of taste and we find it more convenient to work with $r$ through the explicit expression \eqref{eq:u_n=sum P_i(n)alpha_i^n}.
\end{remark}

\begin{definition}\label{def:characteristic roots}
Let $M\leq N$ be integers and let $(u_n)_{n=M}^N$ be a proper polynomial-exponential sequence of rank $r$.
The integer $s$ in Definition~\ref{def:pol-exp sequence} is called the number of characteristic roots of $(u_n)_{n=M}^N$; this is well-defined thanks to Corollary~\ref{cor:uniqueness}.
\end{definition}

\begin{remark}\label{rem:r+ds}
Let $M\leq N$ be integers and let $(u_n)_{n=M}^N$ be a proper polynomial-exponential sequence of rank $r$ and number of characteristic roots $s$. Express $u_n$ for $M\leq n\leq N$ as in \eqref{eq:u_n=sum P_i(n)alpha_i^n}. Then for every non-zero polynomial $Q(z)\in\Qbar[z]$ of degree $d$,
the sequence $(Q(n)u_n)_{n=M}^N$ is a polynomial-exponential sequence of rank $r+ds$ since
$$Q(n)u_n=Q(n)P_1(n)\alpha_1^n+\cdots+Q(n)P_s(n)\alpha_s^n\quad \text{for $M\leq n\leq N$.}$$
Moreover, if $r+ds\leq \displaystyle\frac{N-M+1}{2}$ then $(Q(n)u_n)_{n=M}^N$ is a proper polynomial-exponential sequence of rank $r+ds$.
\end{remark}

We conclude this subsection with the following:
\begin{lemma}\label{lem:coeffs of rational functions}
Let $M\leq N$ be positive integers. Let $P(z),Q(z)\in\Qbar[z]$ with $Q(0)\neq 0$, $\gcd(P,Q)=1$, $\deg(P)\leq M-1$, and $\deg(Q)\leq M$. Let $\displaystyle\sum_{n=0}^{\infty}u_nz^n$ be the power series of $P(z)/Q(z)$ centered at $0$. We have that the sequence $(u_n)_{n=M}^N$ is a polynomial-exponential sequence of rank
$r=\deg(Q)$.
\end{lemma}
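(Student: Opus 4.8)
The plan is to pass to the partial fraction decomposition of $P(z)/Q(z)$ and read off the coefficients through the binomial series, checking at the end that the coprimality hypothesis forces the rank to be exactly $\deg Q$.

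First I would use polynomial division to write $P(z)/Q(z)=E(z)+R(z)/Q(z)$ with $E(z),R(z)\in\Qbar[z]$ and $\deg R<\deg Q$; if $\deg P<\deg Q$ then $E=0$, and otherwise $\deg E=\deg P-\deg Q\le M-1-\deg Q<M$. In particular $E(z)$ only affects the coefficients $u_n$ with $n<M$, so for $n\ge M$ the coefficient $u_n$ equals the $z^n$-coefficient of $R(z)/Q(z)$. Note also $\gcd(R,Q)=\gcd(P,Q)=1$, so $R(\beta)\neq 0$ for every root $\beta$ of $Q$.

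Next, writing $Q(z)=Q(0)\prod_{j=1}^{k}(1-z/\beta_j)^{m_j}$ with $\beta_1,\dots,\beta_k$ the distinct roots of $Q$ (all nonzero, since $Q(0)\neq 0$) and $\sum_j m_j=\deg Q$, I would take the partial fraction expansion
$$\frac{R(z)}{Q(z)}=\sum_{j=1}^{k}\sum_{i=1}^{m_j}\frac{a_{j,i}}{(1-z/\beta_j)^{i}},\qquad a_{j,i}\in\Qbar,$$
and observe, by multiplying through by $(1-z/\beta_j)^{m_j}$ and evaluating at $z=\beta_j$, that $a_{j,m_j}=R(\beta_j)\big/\big(Q(0)\prod_{l\neq j}(1-\beta_j/\beta_l)^{m_l}\big)\neq 0$. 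Then using $\dfrac{1}{(1-z/\beta)^{i}}=\sum_{n\ge 0}\binom{n+i-1}{i-1}\beta^{-n}z^{n}$, in which $n\mapsto\binom{n+i-1}{i-1}$ is a polynomial in $n$ of degree $i-1$, I obtain for every $n\ge M$
$$u_n=\sum_{j=1}^{k}\widetilde{P}_j(n)\,\alpha_j^{n},\qquad \alpha_j:=\beta_j^{-1},\quad \widetilde{P}_j(n):=\sum_{i=1}^{m_j}a_{j,i}\binom{n+i-1}{i-1}.$$

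To finish I would note that the $\alpha_j$ are distinct and nonzero, and that since $a_{j,m_j}\neq 0$ the polynomial $\widetilde{P}_j$ has degree exactly $m_j-1$ (the top-degree contribution comes from the $i=m_j$ term), so each $\widetilde{P}_j$ is nonzero. Hence $(u_n)_{n=M}^{N}$ fits Definition~\ref{def:pol-exp sequence}(a) with $s=k$, and its rank is $k+\sum_{j=1}^{k}\deg\widetilde{P}_j=k+\sum_{j=1}^{k}(m_j-1)=\deg Q$; when $\deg Q=0$ the sum is empty and $u_n=0$ for $n\ge M$, consistent with rank $0$. The computation is routine; the only delicate point — and the only place the hypothesis $\gcd(P,Q)=1$ is genuinely used — is verifying that each $\widetilde{P}_j$ is nonzero of degree precisely $m_j-1$, which is exactly what pins the rank at $\deg Q$ rather than something smaller, while the degree bounds on $P$ and $Q$ serve only to keep the polynomial part $E(z)$ from disturbing the coefficients indexed by $[M,N]$.
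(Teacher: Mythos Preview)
Your proof is correct and follows essentially the same route as the paper's: polynomial division to remove the polynomial part, then partial fractions of $R(z)/Q(z)$ and the binomial series to extract the coefficients, yielding rank $\deg Q$. If anything, you are more careful than the paper in spelling out why each $\widetilde{P}_j$ has degree exactly $m_j-1$ (via $a_{j,m_j}\neq 0$, which is where $\gcd(P,Q)=1$ enters); the paper simply asserts $\deg(P_i)=c_i-1$ without making this point explicit.
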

\begin{proof}
The case when $\deg(Q)=0$ is obvious since $u_n=0$ for $n\geq M$. Consider the case $\deg(Q)>0$ and assume without loss of generality that $Q(0)=1$. Write $Q(z)=(1-\alpha_1z)^{c_1}\cdots (1-\alpha_sz)^{c_s}$, express
$$\frac{P(z)}{Q(z)}=A(z)+\frac{R(z)}{Q(z)}$$
with $A,R\in\C[z]$, $\deg(A)\leq M-1$, and $\deg(R)<\deg(Q)$, and use partial fraction decomposition for
$R(z)/Q(z)$ in order to have:
$$u_n=P_1(n)\alpha_1^n+\cdots+P_s(n)\alpha_s^n\quad \text{for $n\geq M$}$$
with polynomials $P_i(z)\in\Qbar[z]$'s such that $\deg(P_i)=c_i-1$ for $1\leq i\leq s$. The resulting rank is:
$$s+\sum_{i=1}^s\deg(P_i)=\sum_{i=1}^sc_i=\deg(Q).$$
\end{proof}

\section{Proof of Theorem~\ref{thm:main 1} and Theorem~\ref{thm:main 2}}
We need the following:
\begin{lemma}\label{lem:lcm of den of g(k)}
Let $S$ be a subset of $\N$ such that $S\cap [1,n]=o(n/\log n)$ as $n\to\infty$. Let $g(z)\in\Qbar(z)$ that is not a polynomial and $n$ is not a pole of $g(z)$ for every $n\in\N\setminus S$. Then there exists $\beta>1$ such that
$$\lcm\{\den(g(k)):\ k\in \N\setminus S, k\leq n\}>\beta^n$$
for all sufficiently large $n$.
\end{lemma}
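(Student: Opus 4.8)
The plan is to exploit the fact that a rational function $g(z)\in\Qbar(z)$ which is not a polynomial has a genuine pole, say at some $\alpha\in\Qbar$ of multiplicity $e\ge 1$, and that near an integer argument the values $g(k)$ must have denominators growing because of a prime-ideal analysis at a suitable place. First I would reduce to the case where $g$ has a single pole: write the partial fraction decomposition $g(z)=A(z)+\sum_i \frac{c_i(z)}{(z-\alpha_i)^{e_i}}$ (with $A$ a polynomial and the sum nonempty), pick one pole $\alpha=\alpha_i$, and work in the number field $L=\Q(\alpha, \text{coefficients of }g)$. Here the key observation is that $\alpha$ is \emph{not} an integer argument that $g$ is being evaluated at (we only evaluate at $k\in\N$), but $\alpha$ itself need not be rational or an algebraic integer; nonetheless, since $g$ is not a polynomial, for \emph{infinitely many} integers $k$ the value $g(k)$ is nonzero and, crucially, we can pick a prime $\fp$ of $L$ dividing the "denominator" of $\alpha$ (i.e. with $v_\fp(\alpha)<0$) or, if $\alpha$ is $\fp$-integral for all $\fp$, a prime at which some coefficient $c_i$ of the principal part is not integral.

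The heart of the argument is a local (non-archimedean) estimate: fix a place $\fp$ of $L$ and consider $v_\fp(g(k))$ as $k$ ranges over $\N\setminus S$. I would show that there is a fixed constant $\kappa>0$ and a set of integers $k$ of positive density (in fact, all $k$ in certain congruence/proximity classes mod powers of the residue characteristic of $\fp$) for which $v_\fp(g(k))\le -\kappa \cdot (\text{something growing})$ — more precisely, using the principal part $\frac{c(z)}{(z-\alpha)^e}$ and the ultrametric inequality, if $k$ is chosen so that $v_\fp(k-\alpha)$ is large (this happens for $k$ in a $\fp$-adic neighborhood of $\alpha$, if $\alpha$ is $\fp$-integral) then $v_\fp(g(k))=v_\fp(c(k))-e\,v_\fp(k-\alpha)$ is very negative. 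Summing $-v_\fp(g(k))$ (equivalently, taking the $\fp$-part of $\lcm\{\den(g(k))\}$) over $k\le n$, $k\notin S$, one gets a contribution that is at least a positive constant times $n$, because a positive proportion of integers $k\le n$ land in the relevant $\fp$-adic neighborhood and the sparsity hypothesis $|S\cap[1,n]|=o(n/\log n)=o(n)$ removes only negligibly many of them. Therefore $\log\lcm\{\den(g(k)):k\le n, k\notin S\}\ge c\,n$ for some $c>0$ and all large $n$, which gives the desired $\beta=e^{c}>1$.

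I would handle the remaining case — $\alpha$ is $\fp$-integral for every $\fp$, i.e. $\alpha$ is an algebraic integer, so there is no $\fp$-adic "pole" of $\alpha$ to exploit directly — by instead using a place $\fp$ at which a coefficient of the principal part fails to be integral (such a place exists since $g\notin\Qbar[z]$ would otherwise force... actually one must be a little careful: the real invariant is that $g$ is not a polynomial, so its reduction mod $\fp$ is not a polynomial for all but finitely many $\fp$, and picking such a $\fp$ together with $k$ reducing to the pole of $\bar g$ mod $\fp$ forces $v_\fp(g(k))<0$). In either case the mechanism is the same: a positive density of admissible $k\le n$ each contribute a bounded-below negative $\fp$-valuation, giving linear growth of the logarithm of the relevant $\lcm$.

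The main obstacle I expect is making the "positive density of $k$ forces uniformly negative $v_\fp(g(k))$" step precise and uniform: one needs to choose the place $\fp$ and the congruence conditions on $k$ so that the lower bound on $-v_\fp(g(k))$ does not decay to zero as $k\to\infty$, and so that the subset of such $k$ genuinely has positive lower density (hence survives deletion of $S$). The cleanest route is probably: pick $\fp$ with residue characteristic $p$ so that $\bar g\in\F_{\fp}(z)$ is well-defined and has an honest pole $\bar\alpha\in\F_\fp$; then every $k\equiv \bar\alpha \pmod{\fp}$ (a set of density $1/p>0$ among integers, after excluding the finitely many $k$ where the numerator also vanishes mod $\fp$) satisfies $v_\fp(g(k))\le -1$, and this already yields $\ord_\fp\big(\lcm\{\den(g(k)):k\le n,k\notin S\}\big)\ge (1/p - o(1))\,n$, whence $\lcm\{\den(g(k)):k\le n,k\notin S\}\ge p^{(1/p-o(1))n}>\beta^n$ with $\beta=p^{1/(2p)}>1$ for all large $n$. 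Verifying that such a $\fp$ exists (it does: if $\bar g$ were a polynomial mod $\fp$ for all $\fp$ then $g$ would be a polynomial) and that the numerator-vanishing exceptional set is finite (true since $\gcd(\text{num},\text{denom})=1$ in $\Qbar[z]$, so they share no common factor, hence the resultant is a nonzero $\fp$-adic unit for all but finitely many $\fp$) completes the proof.
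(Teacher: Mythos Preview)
Your approach has a genuine gap: you conflate sum and maximum when computing the $\fp$-part of an $\lcm$. For a fixed prime $\fp$ lying over a rational prime $p$, one has
\[
v_p\bigl(\lcm\{\den(g(k)):k\le n,\ k\notin S\}\bigr)=\max_k\, v_p(\den(g(k))),
\]
not a sum over $k$. So even if a set of $k\le n$ of density $1/p$ satisfies $v_\fp(g(k))\le -1$, this only yields $v_p(\lcm)\ge 1$, not $(1/p-o(1))n$ as you claim. Pushing harder at the single prime---taking $k$ that are $\fp$-adically very close to the pole $\alpha$---gives at best $v_p(\lcm)=O(\log n)$ for $k\le n$, hence only polynomial growth of the $\lcm$. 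A single prime cannot produce the required exponential lower bound.

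The paper's proof works instead by letting the prime vary. Write $g=A/B$ with $A,B\in\cO_K[z]$ coprime over $K$, and let $\tilde B\in\Z[z]$ be the norm of $B$. By Chebotarev, a positive proportion $C>0$ of the primes $p\le n$ admit a root of $\tilde B$ modulo $p$, hence some $k\in[1,n]$ with $p\mid\tilde B(k)$; for all $p$ not dividing a fixed resultant-type constant $D$, this forces $p\mid\den(g(k))$. By the prime number theorem the product of these primes exceeds $\beta_1^n$ for any $\beta_1<e^C$. The hypothesis $|S\cap[1,n]|=o(n/\log n)$ now enters in an essential way (your sketch only uses $o(n)$, which is another warning sign): the primes $p$ whose \emph{only} witnesses $k\in[1,n]$ with $p\mid\tilde B(k)$ lie in $S$ must divide $\prod_{k\in S\cap[1,n]}\tilde B(k)$, and this product is $n^{O(|S\cap[1,n]|)}=e^{o(n)}$. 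Discarding those primes still leaves a product exceeding $\beta^n$ for suitable $\beta>1$.
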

\begin{proof}
By replacing $g(z)$ by $g(z+M)$ and replacing $S$ by 
$\{s\in\N:\ s+M\in S\}$ for a large integer $M$, we may assume that 
$n$ is not a pole of $g(z)$ for every $n\in\N$. 
Let $K$ be a number field with ring of integers $\cO_K$ such that $g(z)=A(z)/B(z)$
with $A(z),B(z)\in \cO_K[z]$ having no common factor in $K[z]$; by the earlier assumption we have $B(n)\neq 0$ for every $n\in\N$. There exist $A_1(z),B_1(z)\in \cO_K[z]$ and $D\in \cO_K\setminus\{0\}$ such that
$$A_1(z)A(z)+B_1(z)B(z)=D.$$
Therefore, if $p$ is a prime number greater than $N:=\vert \Norm_{K/\Q}(D)\vert$ 
and $k\in\N$ such that $B(k)$ is divisible by a prime lying above
$p$ then $p\mid \den(g(k))$.

Let  $\tilde{B}(z)\in\Z[z]$ be the norm of $B(z)$ over $K$. For $n\in\N$, put
$$\cP_n=\{\text{prime $p$ in $[N+1,n]$}:\ p\mid \tilde{B}(k)\ \text{for some integer $k\in[1,n]$}\}\ \text{and}$$
$$\cP_n'=\{p\in\cP_n:\ p\nmid \tilde{B}(k)\ \text{for every integer $k\in [1,n]\setminus S$}\}.$$
It follows that $\displaystyle\prod_{p\in\cP_n\setminus\cP_n'}p=\left(\prod_{p\in\cP_n}p\right)/\left(\prod_{p\in\cP_n'}p\right)$ divides $\lcm\{\den(g(k)):\ k\in \N\setminus S,k\leq n\}$.

The set $\cP_n$ is exactly the set of primes in $[N+1,n]$ such that $\tilde{B}(z)$ has an integer root modulo $p$. It follows from the Chebotarev density theorem (for natural density) that there exists $C\in(0,1]$ such that $\vert \cP_n\vert > Cn/\log n$ when $n$ is large. Pick any $\beta_1\in (1,e^C)$ then we have
$$\prod_{p\in\cP_n}p > \beta_1^n$$
for all sufficiently large $n$ thanks to the prime number theorem. From the definition of $\cP_n$ and $\cP_n'$, every $p\in\cP_n'$ must divide $\tilde{B}(k)$ for some $k\in [1,n]\cap S$. Since $\vert[1,n]\cap S\vert=o(n/\log n)$, we have:
$$0<\prod_{k\in [1,n]\cap S}\vert \tilde{B}(k)\vert=n^{o(n/\log n)}=e^{o(n)}$$
and therefore
$$\prod_{p\in\cP_n'}p=e^{o(n)}$$
for all sufficiently large $n$.

Combining all the above, we may take any $\beta\in (1,\beta_1)$ and have that
$$\lcm\{\den(g(k)):\ k\in\N\setminus S, k\leq n\}>\beta^n$$
for all sufficiently large $n$. This finishes the proof.
\end{proof}

\subsection{Proof of Theorem~\ref{thm:main 1}}
Throughout this subsection, let $S$, $K$, and $f$ be as in the statement of Theorem~\ref{thm:main 1}. We assume that $f$ can be extended to an analytic function
on a simply connected domain $G$ that strictly contains the open unit disk. Let 
$\rho>1$ be the conformal radius from the origin of $G$ and fix $r\in (1,\rho)$. Let $C_1$ and $C_2$ be the constants in the conclusion of Corollary~\ref{cor:cor of Polya's inequality}.  
Fix $C_3>1$ such that:
\begin{equation}\label{eq:C3 choice}
C_3^{3[K:\Q]}<r.
\end{equation}
Fix $C_4>0$ such that
\begin{equation}\label{eq:sigma(a_n) bound}
\vert\sigma(a_n)\vert<C_4C_3^n
\end{equation}
for every embedding $\sigma$ of $K$ into $\C$; this is possible since $\sigma(f)$ converges in the open unit disk. Fix a positive number $\epsilon$ such that
\begin{equation}\label{eq:epsilon choice}
\epsilon<(\log C_3)/4.
\end{equation}

For $n\in\N$, let $d_n=\vert S\cap [1,20n]\vert$ and let $A_n(z)$ be the monic polynomial of degree $d_n$ with only simple roots that are exactly  the elements of
$S\cap [1,20n]$. We assume that $n$ is sufficiently large so that 
\begin{equation}\label{eq:d_n < epsilon n/logn}
d_n\leq \epsilon \frac{n}{\log n}.
\end{equation}
For $k\in\N_0$ with $k\leq \epsilon n/\log n$, put
\begin{equation}\label{eq:Pnk=z^kAnk}
P_{n,k}(z)=z^kA_n(z).
\end{equation}
Then we have:
\begin{equation}\label{eq:Pnk(z) bound}
\max\{\vert P_{n,k}(z)\vert:\ z\in [0,20n]\}\leq (20n)^{k+d_n}\leq (20n)^{2\epsilon n/\log n}<C_3^n
\end{equation}
when $n$ is sufficiently large thanks to \eqref{eq:epsilon choice}. By Stirling's formula, we have:
\begin{equation}\label{eq:Stirling}
\log ((d_n+k)!)\leq \log(\lfloor 2\epsilon n/\log n\rfloor!)<3\epsilon n
\end{equation}
for sufficiently large $n$ and for $k\in\N_0$ with $k\leq \epsilon n/\log n$. By Lemma~\ref{lem:estimate alpha_i}, we can express
\begin{equation}\label{eq:Pnk and alphanki}
P_{n,k}(z)=\sum_{i=0}^{k+d_n} \alpha_{n,k,i}\binom{z}{i}
\end{equation}
with
\begin{equation}\label{eq:alphanki bound}
\vert \alpha_{n,k,i}\vert \leq C_3^n 2^{d_n+k}(d_n+k)!\leq C_3^n 2^{2\epsilon n/\log n} e^{3\epsilon n}<C_3^{2n}
\end{equation}
thanks to \eqref{eq:epsilon choice} and \eqref{eq:Stirling}.

Now we consider the power series
$$\sum_{\ell=0}^{\infty} P_{n,k}(\ell)a_{\ell}z^{\ell}=\sum_{i=0}^{d_n+k} \frac{\alpha_{n,k,i}}{i!}f^{(i)}(z),$$
its Hankel matrix
\begin{align*}
H_{n,k,m}:&=H_{0,m}\left(\sum P_{n,k}(\ell)a_{\ell}z^{\ell}\right)\\
&=\begin{pmatrix} P_{n,k}(0)a_0 & P_{n,k}(1)a_{1} & \ldots & P_{n,k}(m)a_{m} \\
P_{n,k}(1)a_{1} & P_{n,k}(2)a_{2} &\ldots  & P_{n,k}(m+1)a_{m+1}  \\
 \ldots  \\
 P_{n,k}(m)a_{m} & P_{n,k}(m+1)a_{m+1} &\ldots  & P_{n,k}(2m)a_{2m} \end{pmatrix},
\end{align*}
and Hankel determinant $\Delta_{n,k,m}=\det(H_{n,k,m})$ for integers $m\in [n,10n]$. 
		
By Corollary~\ref{cor:cor of Polya's inequality}, \eqref{eq:Pnk and alphanki}, and \eqref{eq:alphanki bound}, we have:		
\begin{align}\label{eq:Deltankm bound}
\begin{split}
\vert \Delta_{n,k,m}\vert&\leq (m+1)!C_1^{m+1}\left((d_n+k+1)C_3^{2n}C_2^{d_n+k}\right)^{m+1}r^{-m(m+1)}\\
&< (C_3^3r^{-1})^{m(m+1)}
\end{split}
\end{align}
when $n$ is sufficiently large, $k<\epsilon n/\log n$, and $m\in [n,10n]$; the last inequality in \eqref{eq:Deltankm bound} follows from the fact that $C_3^{m(m+1)}$ dominates the remaining factor
$(m+1)!C_1^{m+1}\left((d_n+k+1)C_2^{d_n+k}\right)^{m+1}$. For every embedding $\sigma$ of $K$ into $\C$, we use \eqref{eq:sigma(a_n) bound} and \eqref{eq:Pnk(z) bound} to obtain:
\begin{align}\label{eq:sigma(Deltankm) bound}
\vert \sigma(\Delta_{n,k,m})\vert \leq (n+1)!C_3^{n(m+1)}C_4^{m+1}C_3^{m(m+1)}<C_3^{3m(m+1)}.
\end{align}
 
 Combining \eqref{eq:Deltankm bound} and \eqref{eq:sigma(Deltankm) bound}, we have:
 \begin{equation}\label{eq:Norm of Deltankm}
 \vert\Norm_{K/\Q}(\Delta_{n,k,m})\vert< \left(C_3^{3[K:\Q]}r^{-1}\right)^{m(m+1)}.
 \end{equation}
 Now we let
 \begin{align}\label{eq:Lnkm}
 \begin{split}
 L_{n,k,m}:&=\lcm\{\den(P_{k,n}(\ell)a_{\ell}):\ \ell\leq 2m,\ell\notin S\}\\
 &\leq \lcm\{\den(a_{\ell}):\ \ell\leq 2m,\ell\notin S\}\\
 &<\left(r^{1/[K:\Q]}C_3^{-3}\right)^m
 \end{split}
 \end{align}	
 since $r^{1/[K:\Q]}C_3^{-3}>1$ (see~\eqref{eq:C3 choice}) and the property \eqref{eq:thm main 1}. Since $P_{k,n}(\ell)a_{\ell}=0$ for $\ell\leq 2m$ and $\ell\in S$, we have that 
$L_{n,k,m}^{[K:\Q](m+1)}\vert \Norm_{K,Q}(\Delta_{n,k,m})\vert$ is a natural number that is less than $1$ thanks to \eqref{eq:Norm of Deltankm} and \eqref{eq:Lnkm}. We have proved the following:
\begin{proposition}
For every sufficiently large $n$, for $0\leq k\leq \epsilon n/\log n$, and for $m\in [n,10n]$, we have
$\Delta_{n,k,m}=0$.
\end{proposition}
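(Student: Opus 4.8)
The plan is to turn the analytic bound on $\Delta_{n,k,m}$ into an arithmetic one and then invoke the elementary fact that a rational integer of absolute value strictly less than $1$ must vanish; since the two key estimates \eqref{eq:Norm of Deltankm} and \eqref{eq:Lnkm} are already in hand, what remains is an integrality bookkeeping and the combination of the two bounds.

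First I would record that $P_{n,k}(z)=z^kA_n(z)$ has integer coefficients, since $A_n$ is monic with roots among the positive integers in $S\cap[1,20n]$; hence $P_{n,k}(\ell)\in\Z$ for all $\ell\in\N_0$, and $P_{n,k}(\ell)=0$ whenever $\ell\in S$ with $\ell\le 20n$, in particular whenever $\ell\in S$ with $\ell\le 2m$ (using $m\le 10n$). Consequently each entry $P_{n,k}(i+j)a_{i+j}$ of $H_{n,k,m}$ (indices $0\le i,j\le m$, so $0\le i+j\le 2m$) is either $0$ or has denominator dividing $L_{n,k,m}$. Thus $L_{n,k,m}H_{n,k,m}$ has all entries in $\cO_K$, so $\det\bigl(L_{n,k,m}H_{n,k,m}\bigr)=L_{n,k,m}^{m+1}\Delta_{n,k,m}\in\cO_K$, and applying $\Norm_{K/\Q}$ shows that $L_{n,k,m}^{[K:\Q](m+1)}\Norm_{K/\Q}(\Delta_{n,k,m})$ is a rational integer.

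Next I would estimate the size of this rational integer. From \eqref{eq:Lnkm} one gets $L_{n,k,m}^{[K:\Q](m+1)}<\bigl(rC_3^{-3[K:\Q]}\bigr)^{m(m+1)}$, while \eqref{eq:Norm of Deltankm} gives $\bigl|\Norm_{K/\Q}(\Delta_{n,k,m})\bigr|<\bigl(C_3^{3[K:\Q]}r^{-1}\bigr)^{m(m+1)}$; the two bases are reciprocal, so the product has absolute value $<1$. Hence $L_{n,k,m}^{[K:\Q](m+1)}\Norm_{K/\Q}(\Delta_{n,k,m})=0$, so $\Norm_{K/\Q}(\Delta_{n,k,m})=0$, and therefore $\Delta_{n,k,m}=0$.

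The only real difficulty is making the two exponential bounds exactly reciprocal with common exponent $m(m+1)$, and that has already been engineered in the preceding discussion: $C_3$ is chosen in \eqref{eq:C3 choice} so that $C_3^{3[K:\Q]}<r$; the auxiliary polynomials $P_{n,k}$ are taken of degree $d_n+k=O(n/\log n)$, so the factorials produced by P\'olya's inequality and by the coefficient estimates (Lemma~\ref{lem:estimate alpha_i}, \eqref{eq:alphanki bound}, \eqref{eq:Stirling}) contribute only a factor $e^{O(\epsilon n)}$, absorbed by $C_3^{m(m+1)}$; and \eqref{eq:thm main 1}, applied with $2m$ in place of $n$, bounds $L_{n,k,m}$ by $\bigl(r^{1/[K:\Q]}C_3^{-3}\bigr)^m$ exactly as in \eqref{eq:Lnkm}. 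Thus no new idea beyond assembling \eqref{eq:Norm of Deltankm} and \eqref{eq:Lnkm} is required.
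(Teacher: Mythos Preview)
Your proof is correct and follows essentially the same approach as the paper: the paper likewise observes that $P_{n,k}(\ell)a_\ell=0$ for $\ell\in S\cap[1,2m]$, concludes that $L_{n,k,m}^{[K:\Q](m+1)}\bigl|\Norm_{K/\Q}(\Delta_{n,k,m})\bigr|$ is a nonnegative integer, and combines \eqref{eq:Norm of Deltankm} with \eqref{eq:Lnkm} to force it below $1$. Your write-up simply makes the integrality step (via $L_{n,k,m}H_{n,k,m}\in M_{m+1}(\cO_K)$) more explicit than the paper does.
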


By Corollary~\ref{cor:H1}, there exist $B_{n,k}(z)$ and $C_{n,k}(z)$ in $K[z]$ with $\deg(B_{n,k})\leq n-1$, $C_{n,k}(0)\neq 0$, $\deg(C_{n,k})\leq n$, and $\gcd(B_{n,k},C_{n,k})=1$ such that
\begin{equation}
\sum_{\ell=0}^{\infty} P_{n,k}(\ell)a_{\ell}z^{\ell}-\frac{B_{n,k}(z)}{C_{n,k}(z)}=O(z^{10n+1}).
\end{equation}

By Lemma~\ref{lem:coeffs of rational functions}, the sequence $(P_{n,k}(\ell)a_{\ell})_{\ell=n}^{10n}$
is a proper polynomial-exponential sequence of rank 
\begin{equation}\label{eq:rnk}
r_{n,k}= \deg(C_{n,k})\leq n.	
\end{equation}
Let $s_{n,k}$ denote the number of characteristic roots of the sequence $(P_{n,k}(\ell)a_{\ell})_{\ell=n}^{10n}$.
Recall our definition $P_{n,k}(z)=z^kA_n(z)$ from \eqref{eq:Pnk=z^kAnk}. Our next step is to compare the sequences
$(P_{n,0}(\ell)a_{\ell})_{\ell=n}^{10n}=(A_n(\ell)a_{\ell})_{\ell=n}^{10n}$
as $n$ varies. For this step, first we need to compare the $(P_{n,k}(\ell)a_{\ell})_{\ell=n}^{10n}$ when $n$ is fixed and $k$ varies. While stronger results can be obtained in a similar manner, the following inequality is enough for
our purpose:

\begin{proposition}\label{prop:rn0+n'sn0}
When $n$ is sufficiently large, we have
\begin{equation}
r_{n,0}+\lfloor \epsilon n/\log n\rfloor s_{n,0}\leq n.
\end{equation}
\end{proposition}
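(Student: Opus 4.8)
The plan is to prove the inequality by a short induction on $k$, establishing that $r_{n,0}+ks_{n,0}\le n$ for every integer $k$ with $0\le k\le\lfloor\epsilon n/\log n\rfloor$; the assertion of the proposition is the endpoint $k=\lfloor\epsilon n/\log n\rfloor$. Throughout we may assume $n$ is large enough that every ``sufficiently large $n$'' hypothesis made so far in this subsection is in force.

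First I would reinterpret the sequences. Set $u_\ell:=P_{n,0}(\ell)a_\ell=A_n(\ell)a_\ell$ for $n\le\ell\le 10n$; by \eqref{eq:rnk} applied with $k=0$ together with the definition of $s_{n,0}$, the sequence $(u_\ell)_{\ell=n}^{10n}$ is a proper polynomial-exponential sequence of rank $r_{n,0}\le n$ with $s_{n,0}$ characteristic roots. Since $P_{n,k}(z)=z^kA_n(z)$ by \eqref{eq:Pnk=z^kAnk}, we have $P_{n,k}(\ell)a_\ell=\ell^k u_\ell$, so Remark~\ref{rem:r+ds}, applied with $Q(z)=z^k$, exhibits $(P_{n,k}(\ell)a_\ell)_{\ell=n}^{10n}$ as a polynomial-exponential sequence of rank $r_{n,0}+ks_{n,0}$, which moreover is \emph{proper} as soon as $r_{n,0}+ks_{n,0}\le(9n+1)/2$ (here $M=n$, $N=10n$, so $N-M+1=9n+1$).

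On the other hand, for $0\le k\le\epsilon n/\log n$ the Hankel-determinant computation already carried out — the Proposition asserting $\Delta_{n,k,m}=0$ for $m\in[n,10n]$, followed by Corollary~\ref{cor:H1} and Lemma~\ref{lem:coeffs of rational functions} — shows that this same sequence $(P_{n,k}(\ell)a_\ell)_{\ell=n}^{10n}$ is a proper polynomial-exponential sequence of rank $r_{n,k}\le n$; this is exactly \eqref{eq:rnk}. Hence, \emph{whenever} the rank-$(r_{n,0}+ks_{n,0})$ description from the previous paragraph is also proper, Corollary~\ref{cor:uniqueness} (uniqueness of the rank of a proper polynomial-exponential sequence) forces $r_{n,0}+ks_{n,0}=r_{n,k}\le n$. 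The induction then runs cleanly: the base case $k=0$ is \eqref{eq:rnk}; and if $r_{n,0}+(k-1)s_{n,0}\le n$ for some $1\le k\le\lfloor\epsilon n/\log n\rfloor$, then since $s_{n,0}\le r_{n,0}\le n$ we get $r_{n,0}+ks_{n,0}\le 2n\le(9n+1)/2$, so the rank-$(r_{n,0}+ks_{n,0})$ description is proper and the comparison above gives $r_{n,0}+ks_{n,0}\le n$. Iterating up to $k=\lfloor\epsilon n/\log n\rfloor$ finishes the proof.

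The one point requiring care — and the reason the argument must pass through an induction rather than being applied directly at $k=\lfloor\epsilon n/\log n\rfloor$ — is the properness hypothesis in Corollary~\ref{cor:uniqueness}: a priori $r_{n,0}+ks_{n,0}$ could be as large as roughly $\epsilon n^2/\log n$, far above the threshold $(9n+1)/2$, so the uniqueness comparison with $r_{n,k}$ would simply not be available. Feeding the bound $r_{n,0}+(k-1)s_{n,0}\le n$ back into the estimate keeps the running rank below $2n$ at every stage, which is precisely what is needed to remain in the proper range and close the loop.
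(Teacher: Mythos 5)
Your proof is correct and follows essentially the same route as the paper's: an induction on $k$ showing $r_{n,0}+ks_{n,0}\le n$, using Remark~\ref{rem:r+ds} to realize $(\ell^{k}A_n(\ell)a_\ell)_{\ell=n}^{10n}$ as a polynomial-exponential sequence of rank $r_{n,0}+ks_{n,0}\le 2n$, and then comparing with the proper rank-$r_{n,k}\le n$ description via the uniqueness statement. The only cosmetic difference is that you invoke Corollary~\ref{cor:uniqueness} (both descriptions proper) where the paper invokes Lemma~\ref{lem:uniqueness} directly (via $r+\tilde r\le 9n+1$); both hypotheses are satisfied, so the argument goes through either way.
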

\begin{proof}
Put $n'=\lfloor \epsilon n/\log n\rfloor$. We prove by induction on integers $k\in [0,n']$ that
$$r_{n,0}+ks_{n,0}\leq n.$$
The case $k=0$ is simply \eqref{eq:rnk}. Suppose the desired inequality holds for $k\leq n'-1$. Then Remark~\ref{rem:r+ds} gives that 
$(P_{n,k+1}(\ell)a_{\ell})_{\ell=n}^{10n}=(\ell^{k+1}A_n(\ell)a_{\ell})_{\ell=n}^{10n}$ is a polynomial-exponential sequence of rank 
$$r_{n,0}+(k+1)s_{n,0}\leq n+s_{n,0}\leq n+r_{n,0}\leq 2n$$
where the first inequality follows from the induction hypothesis.
On the other hand, $(P_{n,k+1}(\ell)a_{\ell})_{\ell=n}^{10n}$ is a (proper) polynomial-exponential sequence of rank $r_{n,k+1}\leq n$. Since there are $9n+1\geq 3n$ many terms in our sequence, Lemma~\ref{lem:uniqueness} implies:
$$r_{n,0}+(k+1)s_{n,0}=r_{n,k+1}\leq n.$$
We finish the proof by the principle of induction.
\end{proof}

\begin{notation}
We write $r(n):=r_{n,0}$ and $s(n):=s_{n,0}$ to avoid towers of subscripts in the following notation. For a sufficiently large integer $n$, let $\beta_{n,1},\ldots,\beta_{n,s(n)}$ be distinct non-zero algebraic numbers and let $Q_{n,1},\ldots,Q_{n,s(n)}$ be non-zero polynomials in $\Qbar[z]$ such that
\begin{equation}\label{eq:betani and Qni}
A_n(\ell)a_{\ell}=Q_{n,1}(\ell)\beta_{n,1}^{\ell}+\cdots+Q_{n,s(n)}(\ell)\beta_{n,s(n)}^{\ell}\quad \text{for $n\leq \ell\leq 10n$}.
\end{equation}
The pairs $(\beta_{n,i},Q_{n,i})$'s for $1\leq i\leq s(n)$ are unique up to rearrangement thanks to Corollary~\ref{cor:uniqueness}.
\end{notation}

\begin{notation}\label{not:Dn and En}
Write in simplest form $\displaystyle \frac{A_{n+1}(z)}{A_n(z)}=\frac{D_n(z)}{E_n(z)}$ with $\gcd(D_n,E_n)=1$. From our definition of the $A_n(z)$'s, we have $E_n(z)=1$ and $D_n(z)$ is the monic polynomial with only simple roots and these roots are exactly the elements of $S\cap [20n+1,20n+20]$.
\end{notation}

\begin{proposition}\label{prop:s(n)=s(n+1)}
Let $n$ be a sufficiently large integer, we have
$s(n)=s(n+1)$ and the pairs
$(\beta_{n+1,i},Q_{n+1,i}(z))$'s for $1\leq i\leq s(n+1)$ coincide with the pairs
$(\beta_{n,i},D_n(z)Q_{n,i}(z))$'s for $1\leq i\leq s(n)$ up to rearrangement.
\end{proposition}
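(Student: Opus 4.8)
The idea is to compare the proper polynomial-exponential sequence $(A_n(\ell)a_\ell)_{\ell=n}^{10n}$ with $(A_{n+1}(\ell)a_\ell)_{\ell=n+1}^{10(n+1)}$, using that on the overlap $[n+1,10n]$ we have the exact relation $A_{n+1}(\ell)a_\ell=D_n(\ell)\cdot A_n(\ell)a_\ell$ by Notation~\ref{not:Dn and En}. So first I would restrict both sequences to a common index interval, say $I:=[n+1,10n]$ (of length $9n$), which is harmless since deleting a bounded number of terms from each end does not change the rank or the characteristic data of a proper polynomial-exponential sequence, provided the interval remains long enough compared to twice the rank. On $I$, multiply \eqref{eq:betani and Qni} by $D_n(\ell)$ to get
\begin{equation*}
A_{n+1}(\ell)a_\ell=D_n(\ell)Q_{n,1}(\ell)\beta_{n,1}^\ell+\cdots+D_n(\ell)Q_{n,s(n)}(\ell)\beta_{n,s(n)}^\ell\qquad(\ell\in I).
\end{equation*}
Since $D_n$ is a fixed nonzero polynomial of degree $d:=|S\cap[20n+1,20n+20]|\le 20$, each $D_nQ_{n,i}$ is a nonzero polynomial with $\deg(D_nQ_{n,i})=\deg D_n+\deg Q_{n,i}$, and the $\beta_{n,i}$ remain distinct. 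Hence this display exhibits $(A_{n+1}(\ell)a_\ell)_{\ell\in I}$ as a polynomial-exponential sequence of rank $r(n)+d\cdot s(n)$ with exactly $s(n)$ characteristic roots (namely the same $\beta_{n,i}$'s), matching the description in the proposition's conclusion.

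Next I would identify this with the representation coming from $(A_{n+1}(\ell)a_\ell)_{\ell=n+1}^{10(n+1)}$, which by the Notation preceding the statement is a proper polynomial-exponential sequence of rank $r(n+1)\le n+1$ with characteristic data $(\beta_{n+1,i},Q_{n+1,i})$, $1\le i\le s(n+1)$. Both representations are valid on the interval $I$ of length $9n$, so by Lemma~\ref{lem:uniqueness} they coincide provided the sum of the two ranks is at most $9n$: concretely we need $\bigl(r(n)+d\,s(n)\bigr)+r(n+1)\le 9n$. Here $r(n+1)\le n+1$, and $r(n)+d\,s(n)\le r(n)+20\,s(n)$, which is bounded by $2n$ or so using Proposition~\ref{prop:rn0+n'sn0} (since for large $n$ we have $20\le\lfloor\epsilon n/\log n\rfloor$, whence $r(n)+20\,s(n)\le r(n)+\lfloor\epsilon n/\log n\rfloor s(n)\le n$). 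Thus the sum of ranks is at most $\approx 2n+1\le 9n$ for large $n$, and Lemma~\ref{lem:uniqueness} applies: it gives $r(n+1)=r(n)+d\,s(n)$, $s(n+1)=s(n)$, and up to rearrangement $(\beta_{n+1,i},Q_{n+1,i})=(\beta_{n,i},D_n(z)Q_{n,i}(z))$ for $1\le i\le s(n)$, which is exactly the claim.

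The main obstacle — really the only delicate point — is the bookkeeping needed to apply Lemma~\ref{lem:uniqueness}: one must be careful that after restricting to the common interval $I$ the sequence is still \emph{proper} (i.e. rank $\le |I|/2$), and that the rank bound $r(n)+d\,s(n)\le n$ genuinely follows from Proposition~\ref{prop:rn0+n'sn0} once $n$ is large enough that $\deg D_n\le 20\le\lfloor\epsilon n/\log n\rfloor$. I would also note explicitly that passing from the index set $[n,10n]$ to $I=[n+1,10n]$ (and likewise trimming $[n+1,10n+10]$ down to $I$) does not alter the characteristic roots or the polynomials $Q_{n,i}$, since a proper polynomial-exponential representation restricted to any subinterval of length at least twice its rank is still a representation of the same rank, and Corollary~\ref{cor:uniqueness} forces uniqueness there. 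Everything else is a direct substitution.
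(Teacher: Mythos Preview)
Your proposal is correct and follows essentially the same approach as the paper: restrict to the common interval $[n+1,10n]$, use $A_{n+1}=D_nA_n$ and \eqref{eq:betani and Qni} to exhibit $(A_{n+1}(\ell)a_\ell)$ as a polynomial-exponential sequence of rank $r(n)+\deg(D_n)s(n)\le r(n)+20s(n)\le n$ via Proposition~\ref{prop:rn0+n'sn0}, compare with the rank-$r(n+1)\le n+1$ representation coming from the case $n+1$, and apply Lemma~\ref{lem:uniqueness} since the sum of ranks is at most $2n+1\le 9n$. The paper's proof is terser and omits your explicit discussion of why restriction to a subinterval preserves the characteristic data, but the argument is the same.
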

\begin{proof}
From $A_{n+1}=D_nA_n$ and \eqref{eq:betani and Qni}, we have:
$$A_{n+1}(\ell)a_{\ell}=D_n(\ell)Q_{n,1}(\ell)\beta_{n,1}^{\ell}+\cdots+D_n(\ell)Q_{n,s(n)}(\ell)\beta_{n,s(n)}^{\ell}\quad \text{for $n\leq \ell\leq 10n$}$$
making the sequence $(A_{n+1}(\ell)a_{\ell})_{\ell=n+1}^{10n}$ a polynomial-exponential sequence of rank 
$$r(n)+\deg(D_n)s(n)\leq r(n)+20s(n)\leq n$$
thanks to Proposition~\ref{prop:rn0+n'sn0}. This same sequence is also a polynomial-exponential sequence of rank $r(n+1)\leq n+1$ by using the instance of \eqref{eq:betani and Qni} for $n+1$. We get the desired result thanks to Lemma~\ref{lem:uniqueness}.
\end{proof}

\begin{notation}
We let $s$ denote the common value of $s(n)$ for all sufficiently large $n$. We also rearrange the $(\beta_{n,i},Q_{n,i})$'s for $1\leq i\leq s$ so that we may assume
$(\beta_{n,i},D_nQ_{n,i})=(\beta_{n+1,i},Q_{n+1,i})$ for $1\leq i\leq s$ for all sufficiently large $n$.
\end{notation}

\begin{remark}
Suppose there is a sufficiently large $n_0$ such that $s(n_0)=0$, then $s(n)=0$ for \emph{all} sufficiently large $n$ thanks to Proposition~\ref{prop:s(n)=s(n+1)}. This means $A_n(\ell)a_{\ell}=0$ for 
$n\leq \ell\leq 10n$ for all sufficiently large $n$. Since the roots of $A_n$ are in $S$, we conclude that $a_{\ell}=0$ for all sufficiently large $\ell\notin S$. Theorem~\ref{thm:main 1} follows. In the following, we assume that $s>0$.
\end{remark}

\begin{corollary}
There exist distinct non-zero algebraic numbers $\beta_1,\ldots,\beta_s$
and non-zero rational functions $R_1(z),\ldots,R_s(z)\in\Qbar(z)$
such that every $n\in\N\setminus S$ is not a pole of $R_i(z)$ for $1\leq i\leq s$ and
\begin{equation}
a_{\ell}=R_1(\ell)\beta_1^{\ell}+\cdots+R_s(\ell)\beta_s^{\ell}
\end{equation}
for every sufficiently large $\ell$ that is not in $S$.
\end{corollary}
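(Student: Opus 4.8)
The plan is to read off the desired $\beta_i$ and $R_i$ directly from the stabilized data $(\beta_{n,i},Q_{n,i})$ together with the polynomials $A_n$. First I would fix an integer $n_0$ large enough that Proposition~\ref{prop:s(n)=s(n+1)} applies for all $n\ge n_0$ and the rearrangement chosen just above holds, so that $\beta_{n,i}=\beta_{n+1,i}$ for every $n\ge n_0$ and every $1\le i\le s$. Hence each sequence $(\beta_{n,i})_{n\ge n_0}$ is constant; call its value $\beta_i$. For any single $n$ the numbers $\beta_{n,1},\dots,\beta_{n,s}$ are distinct and non-zero by Definition~\ref{def:pol-exp sequence}, so $\beta_1,\dots,\beta_s$ are distinct non-zero algebraic numbers.

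Next I would produce the rational functions. By Notation~\ref{not:Dn and En} we have $A_{n+1}=D_nA_n$, while the chosen rearrangement gives $Q_{n+1,i}=D_nQ_{n,i}$ for $n\ge n_0$. Therefore
$$\frac{Q_{n+1,i}(z)}{A_{n+1}(z)}=\frac{D_n(z)Q_{n,i}(z)}{D_n(z)A_n(z)}=\frac{Q_{n,i}(z)}{A_n(z)},$$
so the rational function $R_i(z):=Q_{n,i}(z)/A_n(z)$ is independent of $n\ge n_0$; it is non-zero because $Q_{n,i}\neq 0$. Writing $R_i$ in lowest terms, its denominator divides $A_{n_0}$, whose roots all lie in $S\cap[1,20n_0]\subseteq S$, so no element of $\N\setminus S$ is a pole of $R_i$.

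Finally I would verify the stated identity. Let $\ell\notin S$ be large enough that $\ell\ge n_0$ and \eqref{eq:betani and Qni} is available for $n=\ell$; this choice of $n$ is legitimate since $\ell\le\ell\le 10\ell$. Then $A_\ell(\ell)a_\ell=\sum_{i=1}^s Q_{\ell,i}(\ell)\beta_i^\ell$. Because $\ell\notin S$ while the roots of $A_\ell$ lie in $S$, we have $A_\ell(\ell)\neq 0$, so dividing gives
$$a_\ell=\sum_{i=1}^s\frac{Q_{\ell,i}(\ell)}{A_\ell(\ell)}\,\beta_i^\ell=\sum_{i=1}^s R_i(\ell)\,\beta_i^\ell,$$
which is exactly the conclusion.

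This argument is essentially bookkeeping, the real content having already been supplied by Proposition~\ref{prop:s(n)=s(n+1)} and the stabilization of the data. The only point requiring genuine care is the claim that the ratio $Q_{n,i}/A_n$ does not depend on $n$ — this is precisely where the multiplicativity $A_{n+1}=D_nA_n$ and $Q_{n+1,i}=D_nQ_{n,i}$ enters — since it simultaneously pins down the poles of $R_i$ (forcing them into $S$) and the values $R_i(\ell)$ for every admissible choice of $n$ used to evaluate at a given $\ell$.
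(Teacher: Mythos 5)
Your proposal is correct and follows the paper's own argument: define $\beta_i$ as the stabilized value of $\beta_{n,i}$ and $R_i=Q_{n,i}/A_n$, which is independent of $n$ by $Q_{n+1,i}=D_nQ_{n,i}$ and $A_{n+1}=D_nA_n$. The paper's proof is terser (it omits the explicit verification of the identity and of the pole condition, both of which you carry out correctly), but the approach is identical.
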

\begin{proof}
For $1\leq i\leq s$, let $\beta_i$ be the common value of the $\beta_{n,i}$ for all large $n$. From $\displaystyle\frac{Q_{n+1,i}}{Q_{n,i}}=D_n=\frac{A_{n+1}}{A_n}$ for $1\leq i\leq s$ and for all large $n$, we have $\displaystyle \frac{Q_{n+1,i}}{A_{n+1}}=\frac{Q_{n,i}}{A_n}$. We now let $R_i(z)$ be the common value of this latter quotient for all large $n$.
\end{proof}

\begin{proposition}\label{prop:betai's are roots of unity}
The $\beta_1,\ldots,\beta_s$ are roots of unity.
\end{proposition}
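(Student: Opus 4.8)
The plan is to exploit the fact that the coefficients $a_\ell$ of $f(z)$ are, for $\ell\notin S$, eventually given by the polynomial-exponential expression $\sum_{i=1}^s R_i(\ell)\beta_i^\ell$, and to feed this back into the analytic input that $f$ extends across the unit circle. First I would build, for each sufficiently large $n$, a genuine rational function $g_n(z)$ whose coefficient of $z^\ell$ equals $A_n(\ell)a_\ell = \sum_i Q_{n,i}(\ell)\beta_{n,i}^\ell$ for the whole range $n\le \ell\le 10n$ (in fact, since the $Q_{n,i}$ are polynomials of bounded degree and the $\beta_{n,i}=\beta_i$ are fixed for large $n$, these all come from the single rational function $\sum_i R_i(z)\beta_i^\ell$-type generating function $\sum_i R_i$ applied termwise). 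Concretely, $\sum_{\ell} A_n(\ell)a_\ell z^\ell$ agrees with a fixed rational function $h(z)$ — the one whose $\ell$-th coefficient is $A_n(\ell)a_\ell$ via the partial-fraction data — up to $O(z^{10n+1})$, so by the uniqueness coming from Corollary~\ref{cor:H1} the approximants $B_{n,0}(z)/C_{n,0}(z)$ stabilize, giving a single rational function $g(z)=\sum_i R_i$-generating-function with poles only among $z=1/\beta_i$ such that $A_n(\ell)a_\ell$ equals its $\ell$-th Taylor coefficient for all large $\ell\notin S$ (and also for $\ell\in S\cap[1,20n]$, where both sides vanish by construction). Thus $a_\ell$ itself, for $\ell$ large and not in $S$, is the $\ell$-th coefficient of $g(z)/A(z)$-type object; more precisely $a_\ell = \sum_i R_i(\ell)\beta_i^\ell$ with $R_i\in\Qbar(z)$ having no poles at integers outside $S$.

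Next I would argue by contradiction: suppose some $\beta_j$ is \emph{not} a root of unity. The key dichotomy is on $|\beta_i|$. If some $|\beta_i|>1$, then the partial sums $\sum_i R_i(\ell)\beta_i^\ell$ grow like $|\beta_i|^\ell$ along the indices $\ell\notin S$ (after isolating the dominant terms of maximal modulus — here one must rule out cancellation among several $\beta_i$ of equal maximal modulus, which is where a Kronecker-type or Skolem–Mahler–Lech-flavored argument on the linear recurrence enters, or more elementarily, an averaging/root-separation argument since the $\beta_i$ are \emph{fixed} and the $R_i$ are rational with controlled behavior). But $|S\cap[1,n]|=o(n/\log n)$ is sparse, so there are plenty of $\ell\notin S$ in any window, and exponential growth of $a_\ell$ along such $\ell$ contradicts the convergence of $f$ in the open unit disk (which forces $a_\ell=o((1+\epsilon)^\ell)$ for every $\epsilon>0$, by the hypothesis that $\sigma(f)$ converges in $\D$ for every embedding $\sigma$). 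Hence every $|\beta_i|\le 1$. Now if some $|\beta_i|=1$ but $\beta_i$ is not a root of unity, I would use Lemma~\ref{lem:lcm of den of g(k)}: the rational function $R_i(z)$ (or rather the full rational function $h(z)=\sum_i R_i$) would, if it were nontrivial and had a pole off the roots of unity, produce via equidistribution of $\{\ell\beta_i\}$ or via the $\lcm$-growth statement a contradiction with the denominator bound \eqref{eq:thm main 1}. The cleanest route: the hypotheses of Theorem~\ref{thm:main 1} say $\lcm\{\den(a_k): k\le n, k\notin S\}<\beta^n$ for every $\beta>1$; but if $h(z)=\sum_i R_i$-generating-function is a rational function not of the form "poles at roots of unity", then $g(z):=h(z)$ is a rational non-polynomial with $a_\ell=$ (its coefficients) for $\ell\notin S$ large, and Lemma~\ref{lem:lcm of den of g(k)} applied to a suitable auxiliary rational function (built so that its values at $k$ are $a_k$, e.g. using that a $\Qbar$-linear recurrence sequence's $n$-th term is a specializable rational expression after passing to a field where the $\beta_i$ live and tracking a single Galois orbit) forces $\lcm\{\den(a_k):k\notin S, k\le n\}>\beta_0^n$ for some $\beta_0>1$, contradicting \eqref{eq:thm main 1}.

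The main obstacle, I expect, is the second step: controlling possible cancellation among the $\beta_i$ of maximal modulus and, more seriously, converting the "pole not at a root of unity" statement into a lower bound on denominators of the $a_k$ along $k\notin S$. Lemma~\ref{lem:lcm of den of g(k)} is stated for values $g(k)$ of a \emph{rational function} $g$, whereas here $a_k=\sum_i R_i(k)\beta_i^k$ is a polynomial-exponential expression, not a single rational function of $k$; bridging this gap requires extracting from the linear recurrence satisfied by $(a_k)_{k\notin S}$ (approximately — it's only a recurrence on long windows) a genuine rational function whose integer-specializations have the needed $p$-adic denominators, or alternatively reworking the Chebotarev/prime-number-theorem argument of Lemma~\ref{lem:lcm of den of g(k)} directly for polynomial-exponential sequences. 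A reasonable way to finesse this: since the pairs $(\beta_i,R_i)$ are genuinely global (independent of $n$ for large $n$), the sequence $(a_\ell)_{\ell\notin S,\ \ell\text{ large}}$ \emph{is} an honest polynomial-exponential sequence; passing to a number field $K'$ containing all $\beta_i$ and the coefficients of the $R_i$, and examining the $\fp$-adic valuation of $a_\ell$ for primes $\fp$ of $K'$ at which some $\beta_i$ is not a unit (or at which $\beta_i$ has large multiplicative order modulo $\fp$), one obtains unbounded contributions to $\den(a_\ell)$ along a positive-density set of $\ell$, which by the sparsity of $S$ still gives the required contradiction with \eqref{eq:thm main 1}. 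Then $|\beta_i|=1$ and $\beta_i$ is an algebraic integer unit at all primes, and a standard height/Kronecker argument — all conjugates of $\beta_i$ lie on the unit circle — forces each $\beta_i$ to be a root of unity, completing the proof.
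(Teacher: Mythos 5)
Your overall strategy is the same as the paper's: assume some $\beta_i$ is not a root of unity, invoke Kronecker's theorem to produce a place $v$ of a number field containing the $\beta_i$'s at which $\max_i\vert\beta_i\vert_v>1$, show that $\vert a_\ell\vert_v$ then grows exponentially along $\ell\notin S$, and derive a contradiction with the convergence of $\sigma(f)$ in the unit disk when $v$ is archimedean and with the denominator bound \eqref{eq:thm main 1} when $v$ is non-archimedean. Your instinct that Lemma~\ref{lem:lcm of den of g(k)} is the wrong tool here is also correct: that lemma is for values of a single rational function and is only used later, after the $\beta_i$'s are known to be roots of unity.

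The genuine gap is exactly the step you flag as ``the main obstacle'' and then do not carry out: the lower bound $\vert a_\ell\vert_v>M_1^{\ell}$ (for some $M_1>1$ and all large admissible $\ell$) in the presence of several characteristic roots of equal $v$-adic modulus. Skolem--Mahler--Lech is about the zero set of a recurrence and gives no lower bound, and an elementary averaging argument, while plausible at an archimedean place, is not available at a non-archimedean place --- which is precisely the case you need to rule out a $\beta_i$ of archimedean modulus $1$ that is not a root of unity. The paper resolves this by first restricting to an arithmetic progression $\{n_0+\ell\theta\}$ chosen so that no ratio $\gamma_i/\gamma_j$ of the resulting characteristic roots is a root of unity, and then quoting the growth estimate of \cite[Section~2]{KMN19_AA}, valid at an arbitrary place; that estimate is a nontrivial input (in general it rests on the Subspace Theorem) and cannot be replaced by the ``root-separation'' heuristics you mention. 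Without the arithmetic-progression reduction and that cited result, your argument does not close: you would at best control a positive-density set of indices at the archimedean place, and nothing at the non-archimedean places. Your closing remark about ``large multiplicative order modulo $\fp$'' is a red herring; the only relevant non-archimedean datum is whether $\vert\beta_i\vert_v>1$, and the product formula plus the archimedean bound already forces such a $v$ to exist once $\beta_i$ fails to be a unit somewhere. Finally, the first paragraph of your proposal (re-deriving the stabilized rational approximants) duplicates the corollary immediately preceding the proposition and is not needed.
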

\begin{proof}
Suppose that at least one of the $\beta_i$'s is not a root of unity. Note that it is possible to have that $\beta_i/\beta_j$ is a root of unity for some $1\leq i\neq j\leq s$. After restricting to an arithmetic progression 
$$\cA=\{n_0+n\theta:\ n\in\N\}$$ 
if necessary, we have non-zero algebraic numbers $\gamma_1,\ldots,\gamma_t$ and non-zero rational functions 
$U_1(z),\ldots,U_t(z)\in\Qbar(z)$ with the following properties:
\begin{itemize}
\item $a_{n_0+\ell\theta}=U_1(\ell)\gamma_1^{\ell}+\cdots U_t(\ell)\gamma_t^{\ell}$
for all sufficiently large $\ell$ such that $n_0+\ell\theta\notin S$.

\item $\gamma_i/\gamma_j$ is not a root of unity for $1\leq i\neq j\leq t$.

\item At least one of the $\gamma_i$'s is not a root of unity.
\end{itemize}

Enlarge $K$ so that all the $\gamma_i$'s are in $K$ and all the $U_i(z)$'s are in $K(z)$. Since one of the $\gamma_i$'s is not a root of unity, there must be a place $v$ of $K$ such that
$$M:=\max\{\vert \gamma_1\vert_v,\ldots,\vert\gamma_t\vert_v\}>1.$$
Pick any $M_1\in (1,M)$. By \cite[Section~2]{KMN19_AA}, we have
$$\vert a_{n_0+\ell\theta}\vert_v=\left\vert U_1(\ell)\gamma_1^{\ell}+\cdots U_t(\ell)\gamma_t^{\ell}\right\vert_v>M_1^{\ell}$$
for all sufficiently large $\ell$ such that $n_0+\ell\theta\notin S$.

If $v$ is an archimedean place, the condition that $\sigma(f)$ converges in the open unit disk for every embedding $\sigma$ of $K$ into $\C$ is violated.  If 
$v$ is a non-archimedean place, the condition
$\lcm\{\den(a_k): k\leq n,k\notin S\}=e^{o(n)}$ is violated. We arrive at a contradiction either way and this finishes the proof.
\end{proof}

\begin{proof}[Completion of the proof of Theorem~\ref{thm:main 1}]
Since the $\beta_1,\ldots,\beta_s$ are roots of unity, there exist
$\theta\in\N$ and rational functions $V_0(z),\ldots,V_{\theta-1}(z)\Qbar(z)$ that have no poles in $\N\setminus S$ such that
$$a_{i+\ell\theta}=V_i(i+\ell\theta)$$
for $0\leq i\leq \theta-1$ and for all sufficiently large $\ell$ such that $i+\ell\theta\notin S$. Since the $a_n$'s are in $K$, we have that the $V_i$'s are in $K(z)$. By Lemma~\ref{lem:lcm of den of g(k)} and the given condition \eqref{eq:thm main 1}, we have that $V_i$ is a polynomial for $0\leq i\leq \theta-1$. 
Then $\displaystyle\sum_{i=0}^{\theta-1}\sum_{\ell=0}^{\infty} V_i(i+\ell\theta)z^{i+\ell\theta}$ is a rational function whose poles are located at the roots of unity
and the coefficient of $z^n$ is equal to $a_n$ for all sufficiently large $n\in\N\setminus S$. By changing the coefficients of $z^n$ for the first finitely many $n$, we obtain the desired $\displaystyle\sum b_nz^n\in K[[z]]$ 
as in the conclusion of Theorem~\ref{thm:main 1}.
\end{proof}		
	
\subsection{Proof of Theorem~\ref{thm:main 2}}
The proof of Theorem~\ref{thm:main 2} follows the same steps in the above proof of Theorem~\ref{thm:main 1}. Therefore we explain the changes and skip the similar details. As before, we assume that $f(z)$ can be extended analytically to a simply connected domain that strictly contains the unit disk.

For $n\in\N$, if $S\cap [n,20n]$ is non-empty we define $S_n$ to be the singleton consisting of the smallest element in $S\cap [n,20n]$, otherwise we define $S_n=\emptyset$. Let $\tilde{A}_n$ be the monic polynomial with only simple roots and these roots are exactly the elements of $(S\cap [1,20n])\setminus S_n$. The degree $\tilde{d}_n:=\deg(\tilde{A}_n)$ is either $d_n$ or $d_n-1$ depending on whether $S_n$ is empty or a singleton. We have
\begin{equation}\label{eq:n is not a root}
\tilde{A}_n(n)\neq 0\quad \text{for every $n\in\N$}
\end{equation}
for the simple reason: if $n\notin S$ then obviously $n\notin (S\cap [1,20n])\setminus S_n$ and if  $n\in S$ then $S_n=\{n\}$ and hence $n\notin (S\cap [1,20n])\setminus S_n$.

For $0\leq k\leq \lfloor\epsilon n/\log n\rfloor$, let $\displaystyle\tilde{P}_{n,k}=z^k\tilde{A}_n(z)$. Then we consider the Hankel matrix
\begin{align*}
\tilde{H}_{n,k,m}=\begin{pmatrix} \tilde{P}_{n,k}(0)a_0 & \tilde{P}_{n,k}(1)a_{1} & \ldots & \tilde{P}_{n,k}(m)a_{m} \\
\tilde{P}_{n,k}(1)a_{1} & \tilde{P}_{n,k}(2)a_{2} &\ldots  & \tilde{P}_{n,k}(m+1)a_{m+1}  \\
 \ldots  \\
 \tilde{P}_{n,k}(m)a_{m} & \tilde{P}_{n,k}(m+1)a_{m+1} &\ldots  & \tilde{P}_{n,k}(2m)a_{2m} \end{pmatrix}
\end{align*}
and Hankel determinant $\tilde{\Delta}_{n,k,m}=\det(\tilde{H}_{n,k,m})$ for integers $m\in [n,10n]$. Similarly, let
\begin{equation*}
\tilde{L}_{n,k,m}:=\lcm\{\den(\tilde{P}_{k,n}(\ell)a_{\ell}):\ \ell\leq 2m,\ell\notin S\}.
\end{equation*}

If $S_n=\emptyset$, by the same reasoning as before we have that $\displaystyle\tilde{L}_{n,m,k}^{[K:\Q](m+1)}\vert \Norm_{K/\Q}(\tilde{\Delta}_{n,k,m})\vert$ is a natural number that is less than $1$ and hence $\Delta_{n,k,m}=0$ when $n$ is sufficiently large.
If $S_n=\{N\}$ is a singleton and this means $N$ is the smallest number in $S\cap [n,20n]$ then we have that
$$\displaystyle(\tilde{L}_{n,m,k}\den(a_N))^{[K:\Q](m+1)}\vert \Norm_{K/\Q}(\tilde{\Delta}_{n,k,m})\vert$$
is a natural number that is less than $1$ and hence $\Delta_{n,k,m}=0$ by using the additional assumption
$\den(a_N)=e^{o(N)}=e^{o(n)}$ in \eqref{eq:main 2 further condition} and similar estimates as before.

In either case, we have $\Delta_{n,k,m}=0$ when $n$ is sufficiently large, $0\leq k\leq \lfloor\epsilon n/\log n\rfloor$, and $m\in [n,10n]$. Hence
$$(\tilde{P}_{n,0}(\ell)a_{\ell})_{\ell=n}^{10n}=(\tilde{A}_n(\ell)a_{\ell})_{\ell=n}^{10n}$$
is a proper polynomial-exponential sequence of rank $\tilde{r}(n)\leq n$
and number of characteristic roots $\tilde{s}(n)$. We have
$$\tilde{r}(n)+\lfloor\epsilon n/\log n\rfloor\tilde{s}_n\leq n$$
by similar arguments used in the proof of Proposition~\ref{prop:rn0+n'sn0}. Express
\begin{equation}\label{eq:tilde expression}
\tilde{A}_n(\ell)a_{\ell}=\tilde{Q}_{n,1}(\ell)\tilde{\beta}_{n,1}^{\ell}+\cdots+\tilde{Q}_{n,\tilde{s}(n)}(\ell)\tilde{\beta}_{n,\tilde{s}(n)}^{\ell}\quad \text{for $n\leq \ell\leq 10n$}
\end{equation}
as in \eqref{eq:betani and Qni}.

Write in simplest form $\displaystyle\frac{\tilde{A}_{n+1}(z)}{\tilde{A}_n(z)}=\frac{\tilde{D}_n(z)}{\tilde{E}_n(z)}$. Previously in Notation~\ref{not:Dn and En}, we have that $E_n(z)=1$ and $D_n(z)$ has degree at most $20$. In the current case, 
we have $\deg(E_n)\leq 1$ and $\deg(D_n)\leq 21$. This is to take into the possibility that both $n,n+1\in S$ and then $n$ (respectively $n+1$) is a root of $\tilde{A}_{n+1}$
(respectively $\tilde{A}_n$) and is not a root of $\tilde{A}_{n}$ (respectively $\tilde{A}_{n+1})$); in this case we have $\tilde{E}_n(z)=z-n$ while $\tilde{D}_n(z)$ has only simple roots that are $n$ and elements of $S\cap [20n+1,20n+20]$.

By similar arguments to Proposition~\ref{prop:s(n)=s(n+1)}, we have
$\tilde{s}(n)=\tilde{s}(n+1)$ and the pairs
$(\tilde{\beta}_{n+1,i},\tilde{E}_n(z)\tilde{Q}_{n+1,i}(z))$'s for $1\leq i\leq \tilde{s}(n+1)$ coincide with the pairs
$(\tilde{\beta}_{n,i},\tilde{D}_n(z)\tilde{Q}_{n,i}(z))$'s for $1\leq i\leq \tilde{s}(n)$ up to rearrangement. Then we let $\tilde{s}$ denote the common value of the $\tilde{s}(n)$ for large $n$ and make the arrangement so that
$(\tilde{\beta}_{n+1,i},\tilde{E}_n(z)\tilde{Q}_{n+1,i}(z))=(\tilde{\beta}_{n,i},\tilde{D}_n(z)\tilde{Q}_{n,i}(z))$ for $1\leq i\leq \tilde{s}$ for all large $n$.
Let $\tilde{\beta}_i$ be the common value of the $\tilde{\beta}_{n,i}$ for $1\leq i\leq \tilde{s}$. From $\tilde{E}_n\tilde{Q}_{n+1,i}=\tilde{D}_n\tilde{Q}_{n,i}$, we have 
$$\displaystyle\frac{\tilde{Q}_{n+1,i}}{\tilde{Q}_{n,i}}=\frac{\tilde{D}_n}{\tilde{E}_n}=\frac{\tilde{A}_{n+1}}{\tilde{A}_n}$$
and therefore 
$$\frac{\tilde{Q}_{n+1,i}}{\tilde{A}_{n+1}}=\frac{\tilde{Q}_{n,i}}{\tilde{A}_n}$$
for $1\leq i\leq \tilde{s}$ and large $n$. For $1\leq i\leq \tilde{s}$, let $\tilde{R}_i(z)\in \Qbar(z)$ be the common rational function $\tilde{Q}_{n,i}/\tilde{A}_n$
for large $n$. From \eqref{eq:n is not a root} and \eqref{eq:tilde expression}, we have:
$$a_n=\tilde{R}_1(n)\tilde{\beta}_1^n+\cdots+\tilde{R}_{\tilde{s}}(n)\tilde{\beta}_{\tilde{s}}^n\quad \text{for every sufficiently large $n$.}$$

We prove that the $\tilde{\beta}_i$'s are roots of unity as in Proposition~\ref{prop:betai's are roots of unity}. Then we partition $\N_0$ into
congruence classes modulo a $\theta\in\N$ so that $a_n$ is given by the value at $n$ of rational function on each congruence class. Then we apply Lemma~\ref{lem:lcm of den of g(k)} to conclude that those rational functions must be polynomials and finish the proof.
		
	\section{An application to the Artin-Mazur zeta functions for linear endomorphisms on positive characteristic tori}\label{sec:app}
	Throughout this section, let $F$ be the finite field of order $q$ and characteristic $p$. Let
	$\Z_{F}=F[t]$ be the polynomial ring over $F$, $\Q_{F}=F(t)$, and 
	$$\R_{F}=F((1/t))=\left\{\sum_{i\leq m} a_it^i:\ m\in\Z,\ a_i\in F\ \text{for}\ i\leq m\right\}.$$
	The field $\R_{F}$ is equipped with the discrete valuation 
	$$v: \R_{F}\rightarrow \Z\cup\{\infty\}$$ 
	given by $v(0)=\infty$ and 
	$v(x)=-m$ where $x=\displaystyle\sum_{i\leq m} a_it^i$ with $a_m\neq 0$; in fact $\R_{F}$ is the completion of $\Q_{F}$ with respect to this valuation. Let $\vert\cdot\vert_F$ denote the non-archimedean absolute value $\vert x\vert_F=q^{-v(x)}$ for $x\in\R_{F}$. We fix an algebraic closure of $\R_F$ and the   absolute value $\vert\cdot\vert_F$ can be extended uniquely 
	to the algebraic closure \cite[pp.~131--132]{Neu99_AN}. 
	
	Let $\T_{F}=\R_{F}/\Z_{F}$ and let $\pi:\ \R_{F}\rightarrow \T_{F}$ be the quotient map. Every element $\alpha\in \T_{F}$ has the unique preimage $\tilde{\alpha}\in \R_{F}$ of the form
	$$\tilde{\alpha}=\sum_{i\leq -1} a_it^i.$$ 
	This yields a homeomorphism 
	$\T_{\bF}\cong \displaystyle\prod_{i\leq -1}F$ of compact abelian groups. 	
	The analytic number theory, more specifically the theory of characters and $L$-functions, on $\T_F$ has been studied since at least 1965 in work of Hayes \cite{Hay65_TD}. For a recent work in the ergodic theory side, we refer the readers to the paper by Bergelson-Leibman \cite{BL16_AW} and its reference in which the authors establish a Weyl-type equidistribution theorem.
	

	
	Let $f:\ X\rightarrow X$ be  a map from a topological space $X$ to itself. For each
	$k\geq 1$, let $N_k(f)$ denote the number of \emph{isolated} fixed points of $f^k$. Assume that $N_k(f)$ is 
	finite for every $k$, then one can define the Artin-Mazur zeta function \cite{AM65_OP}:
	$$\zeta_f(z)=\exp\left(\sum_{k=1}^{\infty} \frac{N_k(f)}{k} z^k\right).$$
	When $X$ is a compact differentiable manifold and $f$ is a smooth map such that $N_k(f)$ grows at most exponentially in $k$, the  question of whether $\zeta_f(z)$ is algebraic is stated in \cite{AM65_OP}. The rationality of $\zeta_f(z)$ when $f$ is an Axiom A diffeomorphism is established by Manning \cite{Man71_AA} after earlier work by Guckenheimer \cite{Guc70_AA}. 
	On the other hand, when $X$ is an algebraic variety defined over a finite field and $f$ is the Frobenius morphism,  the function $\zeta_f(z)$ is precisely the classical zeta function of the variety $X$ and its rationality is conjectured by Weil \cite{Wei49_NO} and first established by 
	Dwork \cite{Dwo60_OT}. For the dynamics of a univariate rational function, rationality of $\zeta_f(x)$ is established by  Hinkkanen in characteristic zero \cite{Hin94_ZF} while Bridy \cite{Bri12_TO,Bri16_TA} obtains both rationality and \emph{transcendence} results over positive characteristic when $f$ belongs to certain special families of rational functions. More recently, Byszewski and Cornelissen \cite[Theorem~4.3]{BC18_DO} settles the algebraicity problem for the Artin-Mazur zeta function associated to endomorphisms on abelian varieties over positive characteristic; the stronger problem concerning the natural boundary of this zeta function in the transcendence case is settled \cite[Theorem~5.5]{BC18_DO} under the assumption of the unique dominant root of a related linear recurrence sequence.
	
	Let $d$ be a positive integer and let $A\in M_d(\Z_F)$ be a $d\times d$-matrix with entries in $\Z_F$. We use the same notation $A$ to denote the multiplication-by-$A$ map from $\T_F^d$ to itself. We will show that $N_k(A)<\infty$ for every $n$ and hence one can define the Artin-Mazur zeta function $\zeta_A(z)$. In this section, we resolve the algebraicity problem for $\zeta_A(z)$: we provide a complete characterization and an explicit formula when $\zeta_A(z)$ is algebraic. Moreover, we apply Theorem~\ref{thm:main 1} to establish a natural boundary result in the transcendence case as predicted in the general conjecture of Bell-Miles-Ward \cite{BMW14_TA}. We need a couple of definitions before stating our result.

	Let $E$ be a finite extension of $\R_F$. Let 
	$$\cO_E:=\{\alpha\in E:\ \vert\alpha\vert_F\leq 1\},$$ 
	$$\cO_E^{*}=\{\alpha\in E:\ \vert\alpha\vert_F=1\},\ \text{and}$$
	$$\fp_E:=\{\alpha\in K:\ \vert\alpha\vert_F <1\}$$
	respectively denote the valuation ring, unit group, and maximal ideal. In particular: 
	$$\cO:=\cO_{\R_F}=F[[1/t]]\ \text{and}\ 
	\fp:=\fp_{\R_F}=\displaystyle\frac{1}{t} F[[1/t]]=\left\{\sum_{i\leq -1}a_i t^i:\ a_i\in F\ \forall i\right\}.$$ 
	Note that $\fp$ is the compact open subset of $\R_F$ that is both the open ball of radius $1$ and closed ball of radius $1/q$ centered at $0$. The field $\cO_E/\fp_E$ is a finite extension of 
	$\cO/\fp=F$ and the degree of this extension is called the inertia degree of $E/\R_F$ 
	\cite[p.~150]{Neu99_AN}. Let $\delta$ be this inertia degree, then $\cO_E/\fp_E$ is isomorphic to the finite field $\GF(q^{\delta})$. By applying Hensel's lemma \cite[pp.~129--131]{Neu99_AN} for the polynomial $X^{q^{\delta}-1}-1$, we have that $E$ contains all the roots of $X^{q^{\delta}-1}-1$. These roots together with $0$ form a unique copy of $\GF(q^{\delta})$ in $E$ called the Teichm\"uller representatives. This allows us to regard $\GF(q^{\delta})$ as a subfield of $E$; in fact $\GF(q^{\delta})$ is exactly the set of all the roots of unity in $E$ together with $0$. For every $\alpha\in \cO_E$, we can express uniquely:
	\begin{equation}\label{eq:alpha0 and alpha1}
	\alpha=\alpha_{(0)}+\alpha_{(1)}
	\end{equation}
	where $\alpha_{(0)}\in \GF(q^{\delta})$ and $\alpha_{(1)}\in \fp_E$.
	 
	\begin{definition}
	Let $\alpha$ be algebraic over $\R_F$ such that $\vert\alpha\vert_F\leq 1$. Let $E$ be a finite extension of $\R_F$ containing $\alpha$. 
	We call $\alpha_{(0)}$ and $\alpha_{(1)}$ in \eqref{eq:alpha0 and alpha1} respectively the constant term and $\fp$-term of $\alpha$; they are independent of the choice of $E$. When $\vert\alpha\vert_F=1$, the order of $\alpha$ modulo $\fp$ means the order of $\alpha_{(0)}$ in the multiplicative group
	$GF(q^{\delta})^*$ where $\delta$ is the inertia degree of $E/\R_F$; this is independent of the choice of $E$ as well. In fact, this order is the smallest positive integer $n$ such that $\vert \alpha^n-1\vert_F <1$.	
	\end{definition}
	
	We have the following:
	\begin{theorem}\label{thm:zeta}
	Let $A\in M_d(\Z_F)$ and put 
	$\displaystyle r(A)=\prod_{\lambda}\max\{1,\vert\lambda\vert\}$
	where $\lambda$ ranges over all the $d$ eigenvalues of $A$. Among the $d$ eigenvalues of $A$, let $\mu_1,\ldots,\mu_M$ be all the eigenvalues that are roots of unity and let $\eta_1,\ldots,\eta_N$ be all the eigenvalues that have absolute value $1$ and are not roots of unity. For $1\leq i\leq M$, let $m_i$ denote the order of $\mu_i$ modulo $\fp$. For $1\leq i\leq N$, let $n_i$ denote the order of $\eta_i$ modulo $\fp$.
	We have:
	\begin{itemize}
	\item [(a)] Suppose that for every $j\in \{1,\ldots,N\}$, there exists $i\in\{1,\ldots,M\}$ such that
	$m_i\mid n_j$. Then $\zeta_A(z)$ is algebraic and 
	$$\zeta_A(z)=(1-r(A)z)^{-1}\prod_{1\leq \ell\leq M} \prod_{1\leq i_1<i_2<\ldots<i_{\ell}\leq M}R_{A,i_1,\ldots,i_{\ell}}(z)$$
	where $\displaystyle R_{A,i_1,\ldots,i_{\ell}}(z):=\left(1-\left(r(A)z\right)^{\lcm(m_{i_1},\ldots,m_{i_\ell})}\right)^{(-1)^{\ell+1}/\lcm(m_{i_1},\ldots,m_{i_\ell})}$.

	\item [(b)] Otherwise suppose there exists $j\in\{1,\ldots,N\}$ such that for every $i\in\{1,\ldots,M\}$, we have $m_i\nmid n_j$. 
	Then both $\displaystyle\sum_{k=1}^{\infty}N_k(A)z^k$ and $\zeta_A(z)$ converge in the open disk
	$\{z\in\C:\ \vert z\vert < 1/r(A)\}$ and they admit the circle of radius $1/r(A)$ as a natural 
	boundary.  
	 Consequently, the function $\zeta_A(z)$ is transcendental.
	\end{itemize}
	\end{theorem}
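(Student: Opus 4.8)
The plan is to first compute $N_k(A)$ exactly, then read off part~(a) from a generating-function identity, and finally deduce part~(b) from Theorem~\ref{thm:main 1} applied to a renormalised auxiliary series. For the count: a point of $\T_F^d$ is fixed by $A^k$ exactly when it lies in the kernel of $A^k-I$ acting on $\T_F^d$, which is a closed subgroup of the profinite group $\T_F^d$. If $\det(A^k-I)\neq 0$ in $\R_F$ this kernel is isomorphic to $\Z_F^d/(A^k-I)\Z_F^d$, hence finite of order $|\det(A^k-I)|_F=q^{\deg\det(A^k-I)}$, and all its points are isolated; if $\det(A^k-I)=0$ it is an infinite profinite group, which has no isolated points, so $N_k(A)=0$. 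Either way $N_k(A)<\infty$. Writing $\det(A^k-I)=\pm\prod_{\lambda}(\lambda^k-1)$ over the eigenvalues and evaluating $|\lambda^k-1|_F$ using the ultrametric inequality, the Teichm\"uller splitting $\lambda=\lambda_{(0)}+\lambda_{(1)}$, and the identity $(1+\delta)^{p^a}=1+\delta^{p^a}$ valid in characteristic $p$, the contribution of $\lambda$ is: $|\lambda|_F^{k}$ if $|\lambda|_F>1$; $1$ if $|\lambda|_F<1$, or if $|\lambda|_F=1$ and the order of $\lambda$ modulo $\fp$ does not divide $k$; $0$ precisely when $\lambda$ is a root of unity of order dividing $k$; and $q^{-c_\lambda p^{v_p(k)}}$ with $c_\lambda:=-\log_q|\lambda^{n_\lambda}-1|_F>0$ when $|\lambda|_F=1$, $\lambda$ is not a root of unity, and its order $n_\lambda$ modulo $\fp$ divides $k$. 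Combining these, and noting that $r(A)=q^{m}$ for a nonnegative integer $m$ (a Newton-polygon computation), gives $N_k(A)=0$ when some $m_i\mid k$, and $N_k(A)=r(A)^{k}q^{-b_k}$ otherwise, where $b_k:=p^{v_p(k)}\sum_{j:\,n_j\mid k}c_j\in\N_0$ and $b_k=0$ unless some $n_j\mid k$.

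For part~(a): under the hypothesis, if $n_j\mid k$ then the associated $m_i$ divides $k$ as well, so $\det(A^k-I)=0$ and $N_k(A)=0$; hence $N_k(A)=r(A)^{k}$ when no $m_i$ divides $k$ and $N_k(A)=0$ otherwise. An inclusion--exclusion over the conditions $m_i\mid k$, combined with $\sum_{k:\,L\mid k}x^{k}/k=-\frac1L\log(1-x^{L})$ and exponentiation, then rewrites $\zeta_A(z)=\exp(\sum_k N_k(A)z^{k}/k)$ as the stated finite product of rational powers of the polynomials $1-(r(A)z)^{\lcm(m_{i_1},\dots,m_{i_\ell})}$; in particular $\zeta_A$ is algebraic.

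For part~(b): from $0\le N_k(A)\le r(A)^{k}$ both $\sum_k N_k(A)z^{k}$ and $\zeta_A(z)$ converge for $|z|<1/r(A)$; for primes $k\notin\{p,m_1,\dots,m_M\}$ one has $v_p(k)=0$ and (since no $m_i$ equals $1$ in case (b)) no $m_i\mid k$, so $r(A)^{k}q^{-\Sigma}\le N_k(A)\le r(A)^{k}$ with $\Sigma:=\sum_j c_j$ fixed, whence $N_k(A)^{1/k}\to r(A)$ and the radius of convergence is exactly $1/r(A)$. The hypothesis of (b) supplies an eigenvalue $\eta_{j_0}$ with $|\eta_{j_0}|_F=1$ that is not a root of unity; since $\det(A^{n_{j_0}}-I)$ is a nonzero element of $F[t]$ (nonzero precisely because no $m_i\mid n_{j_0}$) of nonnegative $t$-degree while the $\eta_{j_0}$-conjugates contribute a factor of absolute value $<1$, there is an eigenvalue of absolute value $>1$, so $m\ge1$. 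Because $\sum_k N_k(A)z^{k}=z\,\zeta_A'(z)/\zeta_A(z)$ and $\zeta_A$ is zero-free on its disk of convergence, any analytic continuation $\Phi$ of $\zeta_A$ past a point of $|z|=1/r(A)$ is not identically zero, so it has isolated zeros and $z\Phi'/\Phi$ continues $\sum_k N_k(A)z^{k}$ past a nearby boundary point; it therefore suffices to prove that $\tilde g(w):=\sum_k\bigl(N_k(A)/r(A)^{k}\bigr)w^{k}=\sum_k q^{-b_k}w^{k}\in\Q[[w]]$ has the unit circle as a natural boundary. I would apply Theorem~\ref{thm:main 1} to $\tilde g$ with $K=\Q$ and $S:=\{k\in\N:\ b_k\ge(\log k)^2\}$. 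Since $b_k=p^{v_p(k)}\sum_{j:\,n_j\mid k}c_j$, any $k\in S$ has some $n_j\mid k$ and $v_p(k)$ large, so $|S\cap[1,n]|=O(n/(\log n)^2)=o(n/\log n)$; and for $k\notin S$ the denominator of $q^{-b_k}$ equals $q^{b_k}$ with $b_k<(\log k)^2$, so $\lcm\{\den(q^{-b_k}):k\le n,\ k\notin S\}=q^{\max b_k}\le q^{(\log n)^2}=e^{o(n)}<\beta^{n}$ for every $\beta>1$ and all large $n$. Thus Theorem~\ref{thm:main 1} applies, and it remains to exclude its second alternative, i.e.\ that $(q^{-b_n})$ agrees off $S$ with the coefficient sequence of a rational function whose poles lie at roots of unity---equivalently, is eventually a quasi-polynomial $n\mapsto Q_{n\bmod\Pi}(n)$ away from $S$. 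Enlarging $\Pi$ to a multiple of $\lcm(m_1,\dots,m_M,n_1,\dots,n_N)$, set $f_0:=v_p(\Pi)$ and $r:=n_{j_0}p^{f_0}$; then for $n\equiv r\pmod\Pi$ no $m_i$ divides $n$, the set $\{j:\,n_j\mid n\}$ is fixed and contains $j_0$, and $q^{-b_n}=q^{-p^{v_p(n)}C}$ with $C:=\sum_{j:\,n_j\mid r}c_j>0$. This progression contains infinitely many large $n$ with $v_p(n)=f_0$ and infinitely many with $v_p(n)=f_0+1$, all lying outside $S$ (their $b_n$ being constant), and on them $q^{-b_n}$ takes the two distinct values $q^{-p^{f_0}C}$ and $q^{-p^{f_0+1}C}$ infinitely often---which a polynomial cannot do. Hence $\tilde g$, and therefore $\sum_k N_k(A)z^{k}$ and $\zeta_A(z)$, admit the circle $|z|=1/r(A)$ as a natural boundary, and $\zeta_A$ is transcendental since an algebraic function has no natural boundary.

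The steps I expect to be the main obstacle are, first, the $p$-adic evaluation yielding the closed form for $N_k(A)$---in particular the factor $p^{v_p(k)}$ in the exponent, which is exactly the source of the irregularity obstructing rationality---and, second, in part~(b), the calibration of the exceptional set $S$: it must be sparse enough to meet the hypothesis $|S\cap[1,n]|=o(n/\log n)$ of Theorem~\ref{thm:main 1}, must remove precisely the indices whose denominators $q^{b_k}$ grow too quickly, and must nevertheless leave enough structure along the progression $n\equiv n_{j_0}p^{v_p(\Pi)}\pmod\Pi$ to contradict the quasi-polynomial alternative.
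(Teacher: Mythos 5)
Your proposal is correct and follows the same overall architecture as the paper's proof: the identification $N_k(A)=\vert\det(A^k-I)\vert_F$ via the Smith normal form, the closed form for $N_k(A)$ in terms of the Teichm\"uller decompositions $\lambda=\lambda_{(0)}+\lambda_{(1)}$ of the eigenvalues, inclusion--exclusion for part (a), and an application of Theorem~\ref{thm:main 1} to $\sum_k \bigl(N_k(A)/r(A)^k\bigr)z^k$ for part (b). The differences are confined to the implementation of part (b) and are worth recording. First, you note that $N_k(A)$ and $r(A)^k$ are integer powers of $q$, so the coefficients $q^{-b_k}$ lie in $\Q$ and you may take $K=\Q$; the paper instead works in $K=\Q(p^{1/\tau})$, which is harmless but not needed. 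Second, your exceptional set $S=\{k:\ b_k\ge(\log k)^2\}$ is defined directly from the size of the denominators, whereas the paper builds $S$ from congruence conditions $k\equiv 0\bmod p^{\ell-\lfloor C_5\log\ell\rfloor}$ on the blocks $(p^{\ell-1},p^{\ell}]$; both constructions give $\vert S\cap[1,n]\vert=O(n/(\log n)^2)$ and the same $\lcm$ bound, and yours is arguably more transparent since it isolates exactly the indices with large denominator. Third, to rule out the rational alternative of Theorem~\ref{thm:main 1}, the paper shows that the denominators of $c_{n_1p^V\tilde p}$ are unbounded as $V\to\infty$ (which requires producing a prime $\tilde p$ in a prescribed interval so that the index escapes $S$), while you exhibit a single residue class modulo $\Pi$ on which $c_n$ equals $q^{-p^{f_0}C}$ infinitely often and $q^{-p^{f_0+1}C}$ infinitely often, contradicting the quasi-polynomial form; your variant trades the appeal to primes in intervals for an elementary statement about polynomials, and both are sound. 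One cosmetic caveat: your $b_k$ is guaranteed to be a nonnegative integer only when no $m_i$ divides $k$ (otherwise $N_k(A)=0$ and $c_k=0$, so the value of $b_k$ is irrelevant), but since the sparsity estimate uses only the upper bound $b_k\le p^{v_p(k)}\sum_j c_j$ and the $\lcm$ estimate only involves indices with $c_k\neq 0$, this does not affect the argument.
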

	
	\begin{remark}\label{rem:1 after thm zeta}
	When $\det(A)\neq 0$, the quantity $r(A)$ in Theorem~\ref{thm:zeta} is $e^{h(A)}$ where $h(A)$ is the entropy of the endomorphism $A$. This can be proved using a straightforward adaptation of arguments in the classical case of $\R^d/\Z^d$ \cite{Wal82_AI,VO16_FO,GNS21_EO}. 
	\end{remark}
	
	\begin{remark}\label{rem:2 after thm zeta}
	We allow the possibility that any (or even both) of $M$ and $N$ to be $0$. When $N=0$, the condition in (a) is vacuously true and $\zeta_A(z)$ is algebraic in this case. When $N=0$ and $M=0$ meaning that none of the eigenvalues of $A$ has absolute value $1$, the product
	$\displaystyle\prod_{1\leq j\leq M}$ in (a) is the empty product and $\zeta_A(z)=\displaystyle\frac{1}{1-r(A)z}$. When $M=0$ and $N>0$, the condition in (b) is vacuously true and the conclusion in (b) holds.
	\end{remark}
	
	Our results are quite different from results in work of Baake-Lau-Paskunas \cite{BLP10_AN}. In \cite{BLP10_AN}, the authors prove that the zeta functions of endomorphisms of the classical torus 
	$\R^d/\Z^d$ are always rational. In our setting, we have cases when the zeta function is rational, transcendental, or algebraic irrational:
	\begin{example}\label{eg:algebraic irrational}
	Let $F=\GF(7)$ and let $A$ be the diagonal matrix with diagonal entries $\alpha,\beta\in \GF(7)^*$ where $\alpha$ has order $2$ and $\beta$ has order $3$. Then
	$$\zeta_A(z)=\frac{(1-z^2)^{1/2}(1-z^3)^{1/3}}{(1-z)(1-z^6)^{1/6}}$$
	is algebraic irrational.
	\end{example}
	
	\begin{remark}
	In \cite{BMW14_TA}, the authors consider automorphisms $T$ on compact abelian groups $X$ such that $T^k$ has finitely many fixed points for every $k\in\N$ and conjecture that the Artin-Mazur zeta function satisfies the P\'olya-Carlson dichotomy. In our setting, the property that $A^k$ has only finitely many fixed points for every $k$ is equivalent to the property that none of the eigenvalues of $A$ is a root of unity. In other words, $M=0$ in Theorem~\ref{thm:zeta}. As explained in Remark~\ref{rem:2 after thm zeta},
	we have that either $\zeta_A(z)=\displaystyle\frac{1}{1-r(A)z}$ or $\zeta_A(z)$ admits the circle of radius $1/r(A)$ as a natural boundary.
	\end{remark}

	First we derive a formula for $N_k(A)$ which is well-known 
	in the classical case of $\R^d/\Z^d$ \cite{BLP10_AN}:
	\begin{lemma}\label{lem:N_k(A) formula}
	Let $B\in M_d(\Z_F)$. The number of isolated fixed points $N_1(B)$ of the multiplication-by-$B$ map
	$$B:\ \T_F^d\rightarrow \T_F^d$$
	is $\vert \det(B-I)\vert_F$. Consequently $N_k(A)=\vert\det(A^k-I)\vert_F$ for every $k\geq 1$.
	\end{lemma}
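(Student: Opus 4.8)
The plan is to compute $N_1(B)$ directly from the structure of $\T_F^d = \R_F^d/\Z_F^d$ and then obtain the formula for $N_k(A)$ by replacing $A$ with $A^k$. First I would observe that a point $\alpha \in \T_F^d$ is fixed by the multiplication-by-$B$ map exactly when $B\alpha = \alpha$ in $\T_F^d$, i.e. $(B-I)\tilde\alpha \in \Z_F^d$ where $\tilde\alpha \in \R_F^d$ is any lift of $\alpha$. Thus the fixed-point set is the kernel of the induced map $B - I$ on $\T_F^d$, which is $(B-I)^{-1}(\Z_F^d)/\Z_F^d$ when $\det(B-I)\neq 0$. The key point to check is that every fixed point is automatically isolated: since $\T_F^d$ is totally disconnected, or more concretely since the fixed-point set is a coset space of the discrete-looking object $(B-I)^{-1}(\Z_F^d)/\Z_F^d$ inside the profinite group $\T_F^d$, one argues it is finite (hence every point is isolated) precisely when $\det(B-I)\neq 0$; when $\det(B-I)=0$ the set is infinite and $|\det(B-I)|_F = 0$, so the claimed formula $N_1(B)=|\det(B-I)|_F$ holds in that degenerate case too (with $N_1(B)$ interpreted as the count of \emph{isolated} fixed points, which is then $0$).

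Next I would count $(B-I)^{-1}(\Z_F^d)/\Z_F^d$ assuming $C := B-I$ is invertible over $\R_F$. This equals $C^{-1}\Z_F^d / \Z_F^d \cong \Z_F^d / C\Z_F^d$ as $\Z_F = F[t]$-modules, and the latter is a finite-dimensional $F$-vector space whose dimension I can read off from the Smith normal form of $C$ over the PID $F[t]$: writing $C = U \operatorname{diag}(e_1,\ldots,e_d) V$ with $U,V \in \GL_d(F[t])$ and $e_i \in F[t]$, we get $\Z_F^d/C\Z_F^d \cong \bigoplus_i F[t]/(e_i)$, so $\dim_F = \sum_i \deg e_i = \deg\left(\prod_i e_i\right) = \deg \det(C)$ (up to a unit in $F$). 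Therefore $N_1(B) = q^{\deg\det(B-I)} = |\det(B-I)|_F$ by the definition $|x|_F = q^{-v(x)} = q^{\deg x}$ for $x \in F[t]$.

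Finally, since $A$ maps $\Z_F^d$ into itself (as $A \in M_d(\Z_F)$), the same holds for $A^k$, and the isolated fixed points of $A^k$ on $\T_F^d$ are exactly the isolated fixed points of the multiplication-by-$A^k$ map; applying the first part with $B = A^k$ gives $N_k(A) = |\det(A^k - I)|_F$. I would also note that this in particular shows $N_k(A) < \infty$ for every $k$ once one checks $\det(A^k - I) \neq 0$ generically, or more carefully: if $\det(A^k-I)=0$ then $A^k$ has $1$ as an eigenvalue and the isolated-fixed-point count is $0 = |\det(A^k-I)|_F$, so the formula — and hence the finiteness needed to define $\zeta_A(z)$ — is valid in all cases.

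The main obstacle I anticipate is the topological subtlety in the definition of \emph{isolated} fixed point: one must verify carefully that when $\det(B-I)\neq 0$ the fixed-point set is genuinely finite (so all its points are isolated in the profinite topology on $\T_F^d \cong \prod_{i\le -1} F^d$), and that when $\det(B-I)=0$ the fixed-point set, being a positive-dimensional closed subgroup, contains no isolated points, so that $N_1(B)=0$ matches $|\det(B-I)|_F=0$. Handling this cleanly — rather than the Smith-normal-form computation, which is routine — is where the real content lies, and it mirrors exactly the classical argument over $\R^d/\Z^d$ in \cite{BLP10_AN}.
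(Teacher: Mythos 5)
Your proposal is correct and follows essentially the same route as the paper: the bijection between fixed points and $\Z_F^d/(B-I)\Z_F^d$, the Smith normal form computation over the PID $F[t]$ (the paper writes the count as $\prod_i \card(\Z_F/b_i\Z_F)=\prod_i|b_i|_F$ rather than $q^{\deg\det(B-I)}$, which is the same thing), and, in the degenerate case, the observation that any fixed point $y$ is accumulated by the fixed points $y+cx$ with $x$ a nonzero vector in $\ker(B-I)$ and $c\in\R_F$ arbitrarily small, so $N_1(B)=0=|\det(B-I)|_F$. The topological point you flag as the main obstacle is resolved in the paper by exactly this two-line translate argument, so there is no gap.
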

	\begin{proof}
	When $\det(B-I)=0$, there is a non-zero $x\in\R_F^d$ such that $Bx=x$. Then for any 
	fixed point $y\in \T_F^d$, the points $y+cx$ for $c\in \R_F$ are fixed. By choosing $c$ to be in an arbitrarily small neighborhood of $0$, we have that $y$ is not isolated. Hence $N_1(B)=0$.
	
	Suppose $\det(B-I)\neq 0$. There is a 1-1 correspondence between the set of fixed points of 
	$B$ and  the set $\Z_F^d/(B-I)\Z_F^d$. Since $\Z_F$ is a PID, we obtain the Smith Normal Form of $B-I$ that is a diagonal matrix with entries $b_1,\ldots,b_d\in\Z_F\setminus\{0\}$
	and a $\Z_F$-basis $x_1,\ldots,x_d$ of $\Z_F^d$ so that $b_1x_1,\ldots,b_dx_d$
	is a $\Z_F$-basis of $(B-I)\Z_F$. Therefore the number of fixed points of $B$ is:
	$$\prod_{i=1}^d \card (\Z_F/b_i\Z_F)=\prod_{i=1}^d\vert b_i\vert_F=\vert \det(B-I)\vert_F.$$	
	\end{proof}
	
	We fix once and for all a finite extension $E$ of $\R_F$ containing all the eigenvalues of $A$ and let $\delta$ be the inertia degree of $E/\R_F$. 
	For each $\mu_i$ in the  (possibly empty) 
	multiset $\{\mu_1,\ldots,\mu_M\}$ of eigenvalues of $A$ that are roots of unity, we 
	have the decomposition:
	$$\mu_{i}=\mu_{i,(0)}+\mu_{i,(1)}$$
	with $\mu_{i,(0)}\in\GF(q^{\delta})^*$ and $\mu_{i,(1)}\in \fp_E$ as in \eqref{eq:alpha0 and alpha1}; in fact $\mu_{i,(1)}=0$ since $\mu_i$ is a root of unity. Likewise, for each $\eta_{i}$ in 
	the (possibly empty) multiset 
	$\{\eta_1,\ldots,\eta_N\}$, we have:
	$$\eta_i=\eta_{i,(0)}+\eta_{i,(1)}$$
	with $\eta_{i,(0)}\in\GF(q^\delta)^*$ and $\eta_{i,(1)}\in \fp_E\setminus\{0\}$.
	\begin{proposition}\label{prop:main formulas}
	Let $v_p$ denote the $p$-adic valuation on $\Z$. Recall that the orders of
	$\mu_{i,(0)}$ and $\eta_{j,(0)}$ in $\GF(q^{\delta})^*$ are respectively denoted $m_i$ and $n_j$
	for $1\leq i\leq M$ and $1\leq j\leq N$; each of the $m_i$'s and $n_j$'s is coprime to $p$. Let $k$ be a positive integer, we have:
	\begin{itemize}
		\item [(i)] For $1\leq i\leq M$, $\vert\mu_i^k-1\vert_F=
\left\{
	\begin{array}{ll}
		0  & \mbox{if } k\equiv 0\bmod m_i \\
		1 & \mbox{otherwise}
	\end{array}
\right.$.

	\item [(ii)] For $1\leq j\leq N$, $\vert \eta_j^k-1\vert_F=\left\{
	\begin{array}{ll}
		\vert \eta_{j,(1)}\vert_F^{p^{v_p(k)}}  & \mbox{if } k\equiv 0\bmod n_j \\
		1 & \mbox{otherwise}
	\end{array}
\right.$
	
	\item [(iii)] $\displaystyle N_k(A)=\vert \det(A^k-I)\vert_F=r(A)^k\left(\prod_{i=1}^M a_{i,k}\prod_{j=1}^N b_{j,k}\right)^{p^{v_p(k)}}$ where
	$$a_{i,k}=\left\{
	\begin{array}{ll}
		0  & \mbox{if } k\equiv 0\bmod m_i \\
		1 & \mbox{otherwise}
	\end{array}
\right.\ \text{and}\ b_{j,k}=\left\{
	\begin{array}{ll}
		\vert \eta_{j,(1)}\vert_F  & \mbox{if } k\equiv 0\bmod n_j \\
		1 & \mbox{otherwise}
	\end{array}
\right.$$
	for $1\leq i\leq M$ and $1\leq j\leq N$.
	\end{itemize}
	\end{proposition}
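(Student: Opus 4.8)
The plan is to first establish (i) and (ii), which are purely local statements about single eigenvalues, and then assemble them into (iii) via the factorization of $\det(A^k - I)$ over the algebraic closure together with the classical fact that for a finite field the map $x \mapsto x^p$ interacts well with absolute values. For part (i): if $\mu_i$ is a root of unity, then $\mu_i \in \GF(q^\delta)^*$ and $\mu_{i,(1)} = 0$, so $\mu_i^k - 1$ is an element of $\GF(q^\delta)$; it is zero exactly when $\mu_i^k = 1$, i.e. when $m_i \mid k$, and otherwise it is a nonzero element of $\GF(q^\delta)^*$, hence has absolute value $1$ since the Teichm\"uller representatives all have absolute value $1$. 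For part (ii): write $\eta_j = \eta_{j,(0)} + \eta_{j,(1)}$ with $\eta_{j,(1)} \neq 0$. If $n_j \nmid k$ then $\eta_{j,(0)}^k \neq 1$, so $\eta_j^k - 1 \equiv \eta_{j,(0)}^k - 1 \not\equiv 0 \pmod{\fp_E}$, giving absolute value $1$. If $n_j \mid k$, write $k = n_j \cdot k'$ and then further $k = p^{v_p(k)} \cdot k''$ with $p \nmid k''$; since $\eta_j^{n_j}$ has constant term $1$ we reduce to analyzing $(1 + w)^{p^{v_p(k)} m}$ for $w \in \fp_E \setminus \{0\}$ and $\gcd(m,p)=1$. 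The key computation is that raising to the $p$-th power in this setting multiplies the valuation of $w$ exactly by $p$ — more precisely, $(1+w)^p = 1 + w^p$ modulo higher-order terms, but in fact the cleanest route is: $\vert \eta_j^k - 1\vert_F = \vert (\eta_j^{k/p^{v_p(k)}})^{p^{v_p(k)}} - 1\vert_F$, and for $\gamma$ with $\vert \gamma - 1\vert_F < 1$ and $p \nmid m$ one has $\vert \gamma^m - 1\vert_F = \vert \gamma - 1\vert_F$, while $\vert \gamma^p - 1\vert_F = \vert \gamma - 1\vert_F^p$ because $\gamma^p - 1 = (\gamma - 1)^p$ in characteristic $p$ after noting $\gamma^p = ((\gamma - 1) + 1)^p = (\gamma-1)^p + 1$. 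Iterating the $p$-power statement $v_p(k)$ times yields $\vert \eta_j^k - 1\vert_F = \vert \eta_j^{n_j} - 1\vert_F^{p^{v_p(k)}}$, and since $\eta_j^{n_j}$ and $\eta_j$ have the same $\fp$-term up to a unit (they differ by the constant term calculation), one identifies $\vert \eta_j^{n_j} - 1 \vert_F = \vert \eta_{j,(1)}\vert_F$; I would double-check this last identification carefully, as it is the one place where a factor could sneak in.

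For part (iii), I would factor the characteristic polynomial of $A$ over $E$ (enlarging $E$ if necessary, which is harmless) so that $\det(A^k - I) = \prod_{\lambda}(\lambda^k - 1)$ where $\lambda$ runs over the $d$ eigenvalues of $A$ with multiplicity. Taking $\vert\cdot\vert_F$ and splitting the eigenvalues into three groups — those with $\vert\lambda\vert > 1$, those with $\vert\lambda\vert < 1$, and those with $\vert\lambda\vert = 1$ — the first group contributes $\prod_{\vert\lambda\vert>1}\vert\lambda^k - 1\vert_F = \prod_{\vert\lambda\vert>1}\vert\lambda\vert_F^k = r(A)^k$ (using the non-archimedean property and that $\vert\lambda^k\vert_F$ dominates $1$), the second group contributes $\prod_{\vert\lambda\vert<1}\vert\lambda^k - 1\vert_F = 1$, and the third group is exactly the $\mu_i$'s and $\eta_j$'s. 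Substituting (i) and (ii) into the third group gives $\prod_i \vert\mu_i^k - 1\vert_F \cdot \prod_j \vert\eta_j^k - 1\vert_F = \prod_i a_{i,k} \cdot \prod_j b_{j,k}^{p^{v_p(k)}}$, and since $a_{i,k} \in \{0,1\}$ we may harmlessly write $a_{i,k} = a_{i,k}^{p^{v_p(k)}}$, yielding the claimed formula $N_k(A) = r(A)^k\bigl(\prod_i a_{i,k}\prod_j b_{j,k}\bigr)^{p^{v_p(k)}}$. The equality $N_k(A) = \vert\det(A^k - I)\vert_F$ is already Lemma~\ref{lem:N_k(A) formula}.

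The main obstacle I anticipate is part (ii), specifically the precise behavior of $\vert \gamma^{p^{v_p(k)}} - 1 \vert_F$ and pinning down that the base of the exponentiation is exactly $\vert\eta_{j,(1)}\vert_F$ rather than some associate. The characteristic-$p$ identity $(\gamma - 1 + 1)^p = (\gamma-1)^p + 1$ makes the $p$-power step clean — this is where positive characteristic genuinely helps and is morally why the final answer involves $p^{v_p(k)}$ rather than $k$ itself — but one must be careful that $\eta_j^{n_j}$ really does have $\fp$-term of the same absolute value as $\eta_{j,(1)}$, which requires a short argument: $\eta_j^{n_j} = (\eta_{j,(0)} + \eta_{j,(1)})^{n_j}$ has constant term $\eta_{j,(0)}^{n_j} = 1$ and $\fp$-term $n_j\,\eta_{j,(0)}^{n_j - 1}\eta_{j,(1)}$ plus strictly-higher-valuation corrections, and since $p \nmid n_j$ the leading coefficient $n_j\,\eta_{j,(0)}^{n_j-1}$ is a unit, so $\vert \eta_j^{n_j} - 1\vert_F = \vert\eta_{j,(1)}\vert_F$ as needed. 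Everything else is bookkeeping with the ultrametric inequality.
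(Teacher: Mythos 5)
Your proposal is correct and follows essentially the same route as the paper: part (i) by reduction to $\GF(q^\delta)$, part (ii) by the binomial expansion whose leading term $n_j\,\eta_{j,(0)}^{n_j-1}\eta_{j,(1)}$ has unit coefficient (since $p\nmid n_j$) combined with the characteristic-$p$ identity $\gamma^p-1=(\gamma-1)^p$ to extract the exponent $p^{v_p(k)}$, and part (iii) by factoring $\det(A^k-I)=\prod_\lambda(\lambda^k-1)$ and splitting eigenvalues by absolute value. The only difference is cosmetic: you first reduce to $\gamma=\eta_j^{n_j}$ and then iterate the Frobenius step, whereas the paper handles a general $k_0$ with $n_j\mid k_0$, $p\nmid k_0$ directly; both are valid.
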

	\begin{proof}
	Part (i) is easy: $\mu_i^k-1=\mu_{i,(0)}^k-1$ is an element of $\GF(q^\delta)$ and it is $0$ exactly when $k\equiv 0\bmod m_i$. For part (ii), when $k\not\equiv 0\bmod n_j$, we have:
	$$\eta_j^k-1\equiv \eta_{j,(0)}^k-1\not\equiv 0\bmod \fp_K,$$
	hence $\vert \eta_j^k-1\vert_F=1$. Now suppose $k\equiv 0\bmod n_j$ but $k\not\equiv 0\bmod p$, we have:
	$$\eta_j^k-1=(\eta_{j,(0)}+\eta_{j,(1)})^k-1=k\eta_{j,(0)}^{k-1}\eta_{j,(1)}+\sum_{\ell=2}^k \binom{k}{\ell}\eta_{j,(0)}^{k-\ell}\eta_{j,(1)}^{\ell}$$
	and since $\vert k\eta_{j,(0)}^{k-1}\eta_{j,(1)}\vert_F=\vert \eta_{j,(1)}\vert_F$ is strictly larger than the absolute value of each of the remaining terms, we have:
	$$\vert \eta_j^k-1\vert_F=\vert\eta_{j,(1)}\vert_F.$$
	Finally, suppose $k\equiv 0\bmod n_j$. Since $\gcd(n_j,p)=1$, we can write $k=k_0p^{v_p(k)}$ where
	$k_0\equiv 0\bmod n_j$ and $k_0\not\equiv 0\bmod p$. We have:
	$$\vert \eta_j^k-1\vert_F=\vert \eta_j^{k_0}-1\vert_F^{p^{v_p(k)}}=\vert\eta_{j,(1)}\vert_F^{p^{v_p(k)}}$$	
	and this finishes the proof of part (ii). Part (iii) follows from parts (i), (ii), and the definition of $r(A)$.
	\end{proof}
	
	\begin{proof}[Proof of Theorem~\ref{thm:zeta}]
	First, we prove part (a). We are given that for every $j\in\{1,\ldots,N\}$, there exists $i\in\{1,\ldots,M\}$ such that $m_i\mid n_j$. 
	
	Let $k\geq 1$. If $m_i\mid k$ for some $i$ then $N_k(A)=0$ by part (c) of Proposition~\ref{prop:main formulas}. If $m_i\nmid k$ for every $i\in\{1,\ldots,M\}$ then $n_j\nmid k$ for every $j\in\{1,\ldots,N\}$ thanks to the above assumption, then we have $N_k(A)=r(A)^k$ by Proposition~\ref{prop:main formulas}. Therefore $\displaystyle\sum_{k=1}^{\infty}\frac{N_k(A)}{k}z^k$ is equal to:
	\begin{align*}
	&\sum_{\substack{k\geq 1\\m_i\nmid k\text{ for }1\leq i\leq M}} \frac{N_k(A)}{k}z^k\\
	=&\sum_{\substack{k\geq 1\\m_i\nmid k\text{ for }1\leq i\leq M}} \frac{r(A)^k}{k}z^k\\
	=&\sum_{k\geq 1}\frac{r(A)^k}{k}z^k-\sum_{\substack{k\geq 1\\m_i\mid k\text{ for some }1\leq i\leq M}} \frac{r(A)^k}{k}z^k\\
	=&-\log(1-r(A)z)\\
	&-\sum_{\ell=1}^{M}\sum_{1\leq i_1<\ldots<i_{\ell}\leq M}(-1)^{\ell-1}\sum_{\substack{k\geq 1\\ \lcm(m_{i_1},\ldots,m_{i_{\ell}})\mid k}}\frac{r(A)^k}{k}z^k\\
	=&-\log(1-r(A)z)\\
	 &+\sum_{\ell=1}^{M}\sum_{1\leq i_1<\ldots<i_{\ell}\leq M}\frac{(-1)^{\ell+1}}{\lcm(m_{i_1},\ldots,m_{i_{\ell}})}\log\left(1-(r(A)z)^{\lcm(m_{i_1},\ldots,m_{i_{\ell}})}\right)
	\end{align*}
	where the third ``$=$'' follows from the inclusion-exclusion principle. This finishes the proof of part (a).
	
	For part (b), without loss of generality, we assume that $m_i\nmid n_1$ for $1\leq i\leq M$.
	Put
	$$g(z):=\sum_{k=1}^{\infty}N_k(A)z^k.$$
	Proposition~\ref{prop:main formulas} gives that 
	$\vert N_k(A)\vert\leq r(A)^k$, hence $g$ and $\zeta_A(z)$ are convergent in the disk
	of radius $1/r(A)$. From the relation $\displaystyle g(z)=z\frac{\zeta_A'(z)}{\zeta_A(z)}$, it remains to prove that $g(z)$ admits the circle of radius $1/r(A)$ as a natural boundary.
	Assume this is not the case and we will arrive at a contradiction.  Consider
	\begin{equation}\label{eq:c_k}
		c_k:=\frac{N_k(A)}{r(A)^k}\ \text{for $k=1,2,\ldots$}
	\end{equation}
	then the series
	$$f(z):=\sum_{k=1}^\infty c_kz^k=g(z/r(A))$$
	converges in the open unit disk and does not admit the unit circle as a natural boundary. 
	
	Let $\tau$ denote the ramification index of $E/\R_F$, then each $\vert\eta_{j,(1)}\vert_F$ has the form $\displaystyle\frac{1}{q^{d_j/\tau}}$ where $d_j$ is a positive integer \cite[p.~150]{Neu99_AN}. 
	Combining this with \eqref{eq:c_k} and Proposition~\ref{prop:main formulas}(iii), we have that there exists a finite subset $\cE$ of $\N_0$ such that:
\begin{equation}\label{eq:c_k form}
	c_k=0\ \text{or}\ c_k=\left(\frac{1}{p^{d/\tau}}\right)^{p^{v_p(k)}}\ \text{for some $d\in \cE$} 
\end{equation}
for every $k\in\N$. Let $K=\Q(p^{1/\tau})$ and let $\cO_K$ be its ring of integers, we have $f(z)\in K[[z]]$. For every embedding $\sigma$ of $K$ into $\C$, since $\vert\sigma(c_k)\vert=\vert c_k\vert$ for every $k$ thanks to \eqref{eq:c_k form}, we have that $\sigma(f)$ converges in the open unit disk. Since $p$ is totally ramified in $K$, we extend $v_p$ uniquely to $K$ and use the same notation $v_p$ for this extension.

	In order to apply Theorem~\ref{thm:main 1}, we construct the subset $S$ of $\N$ as follows. Let $C_5=\displaystyle\frac{1}{\log p}$ so that
	\begin{equation}\label{eq:C5 choice}	
	p^{C_5}=e.
	\end{equation}
	For $\ell\in\N$, let 
	\begin{equation}\label{eq:S_ell definition}
	S_{\ell}=\left\{\text{integer}\ k\in (p^{\ell-1},p^{\ell}]:\ k\equiv 0\bmod p^{\ell-\lfloor C_5\log\ell\rfloor}\right\}
	\end{equation}
	so that
	\begin{equation}\label{eq:S_ell bound}
	\vert S_{\ell}\vert\leq p^{\lfloor C_5\log\ell\rfloor}\leq\ell\quad \text{thanks to \eqref{eq:C5 choice}.}
	\end{equation}
	
	Let $\displaystyle S:=\bigcup_{\ell=1}^{\infty} S_{\ell}$. We can easily prove that $\vert S\cap [1,n]\vert=o(n/\log n)$ as follows. Given a large integer $n$, let $U$ be the integer such that $p^{U-1}<n\leq p^U$. Then we have
	\begin{equation}
	\vert S\cap [1,n]\vert\leq \sum_{\ell=1}^U\vert S_{\ell}\vert=O(U^2)=O((\log n)^2)=o(n/\log n)
	\end{equation}
	using \eqref{eq:S_ell bound} and the choice of $U$.
	
	Let $C_6:=\max\{\lceil d/\tau\rceil:\ d\in \cE\}$ so that
	\begin{equation}\label{eq:c_k denominator bound}
	p^{C_6p^{v_p(k)}}c_k\in\cO_K\quad \text{for every $k\in\N$}
	\end{equation}
	thanks to \eqref{eq:c_k form}. Given $\beta>1$, fix a large integer $B$ such that
	\begin{equation}\label{eq:B choice}
	p^{C_6p^{1-B}}<\beta.
	\end{equation}
	Let $n\in\N$  be a large integer and let $U$ be the integer such that $p^{U-1}<n\leq p^U$. From our definition of $S$ and the $S_{\ell}$'s, we have that when $n$ is sufficiently large: 
	$$v_p(k)\leq U-B\quad \text{for every $k\in [1,n]\setminus S$}.$$
	Combining this with \eqref{eq:c_k denominator bound}, we have:
	\begin{equation}
	\lcm\{\den(a_k):\ k\in [1,n]\setminus S\}\leq p^{C_6p^{U-B}}< p^{C_6p^{1-B}n}<\beta^n	
	\end{equation}
	where the second inequality follows from $p^U<pn$ and the third inequality follows from \eqref{eq:B choice}.
	
	We can now apply Theorem~\ref{thm:main 1} to conclude that there exists a rational function whose poles are located at the roots of unity with Maclaurin series $\displaystyle\sum\gamma_nz^n$ such that
	$$c_n=\gamma_n\quad \text{for $n\in\N\setminus S$}.$$
	There is a finite collection of polynomials such that when $n$ is large, each $\gamma_n$ is the value at $n$ of a polynomial in the collection. Therefore:
	\begin{equation}\label{eq:bounded from below}
	\text{the $v_p(c_n)$'s for $n\in\N\setminus S$ are bounded from below.}
	\end{equation} 
	We now use the property $m_i\nmid n_1$ for $1\leq i\leq M$ to arrive at a contradiction as follows.

	Write 
	\begin{equation}\label{eq:prod etaj1 with nj mid n1}
	\prod_{1\leq j\leq N,n_j\mid n_1} \vert \eta_{j,(1)}\vert_F=\frac{1}{p^{D}}
	\end{equation}
	with $D\in\Q_{>0}$. Let $V$ be a large integer, then let $\ell$ be a larger integer so that
	\begin{equation}\label{eq:ell choice}
	V<\ell-\lfloor C_5\log\ell\rfloor.
	\end{equation}	
	By increasing $\ell$ further if necessary, we have that the interval $\displaystyle\left(\frac{p^{\ell-1}}{n_1p^V},\frac{p^{\ell}}{n_1p^V}\right]$
	contains a prime number 
	$$\tilde{p}>\max\{p,m_1,\ldots,m_M,n_1,\ldots,n_N\}.$$
	We now have that $n_1p^V\tilde{p}\in (p^{\ell-1},p^{\ell}]$ and $n_1p^V\tilde{p}\notin S_{\ell}$ thanks to
	\eqref{eq:ell choice} and the definition of $S_{\ell}$ in \eqref{eq:S_ell definition}. Then we have
	$n_1p^V\tilde{p}\notin S$ thanks to the definition of $S$.
	From \eqref{eq:c_k}, \eqref{eq:prod etaj1 with nj mid n1}, Proposition~\ref{prop:main formulas}(iii), the assumption $m_i\nmid n_1$ for every $i$, and our choice of $\tilde{p}$, we have:
	$$c_{n_1p^V\tilde{p}}=\frac{1}{p^{Dp^V}}\ \text{hence}\ v_p(c_{n_1p^V\tilde{p}})=-Dp^V.$$
	Since $V$ could be arbitrarily large and $D>0$, we obtain a contradiction to \eqref{eq:bounded from below}. This finishes the proof.
	\end{proof}

	\bibliographystyle{amsalpha}
	\bibliography{GeneralPC} 	

\def\cprime{$'$} \def\cprime{$'$} \def\cprime{$'$} \def\cprime{$'$}
\providecommand{\bysame}{\leavevmode\hbox to3em{\hrulefill}\thinspace}
\providecommand{\MR}{\relax\ifhmode\unskip\space\fi MR }
\providecommand{\MRhref}[2]{%
  \href{http://www.ams.org/mathscinet-getitem?mr=#1}{#2}
}
\providecommand{\href}[2]{#2}
\begin{thebibliography}{BMW14}

\bibitem[AM65]{AM65_OP}
M.~Artin and B.~Mazur, \emph{On periodic points}, Ann. of Math. (2) \textbf{81}
  (1965), 82--99.

\bibitem[BC18]{BC18_DO}
J.~Byszewski and G.~Cornelissen, \emph{Dynamics on abelian varieties in
  positive characteristic, with an appendix by {R.~Royals and T.~Ward}},
  Algebra Number Theory \textbf{12} (2018), 2185--2235.

\bibitem[Bie55]{Bie55_AF}
L.~Bieberbach, \emph{Analytische {F}ortsetzung}, Ergebnisse der Mathematik und
  ihrer Grenzgebiete. 2. Folge, Springer, Berlin, 1955.

\bibitem[BL16]{BL16_AW}
V.~Bergelson and A.~Leibman, \emph{A {W}eyl-type equidistribution theorem in
  finite characteristic}, Adv. Math. \textbf{289} (2016), 928--950.

\bibitem[BLP10]{BLP10_AN}
M.~Baake, E.~Lau, and V.~Paskunas, \emph{A note on the dynamical zeta function
  of general toral endomorphisms}, Monatsh. Math. \textbf{161} (2010), 33--42.

\bibitem[BMW14]{BMW14_TA}
J.~Bell, R.~Miles, and T.~Ward, \emph{Towards a {P}\'olya–{C}arlson dichotomy
  for algebraic dynamics}, Indag. Math. (N.S.) \textbf{25} (2014), 652--668.

\bibitem[BNZ]{BNZ22_DF2}
J.~P. Bell, K.~D. Nguyen, and U.~Zannier, \emph{D-finiteness, rationality, and
  height {II}: lower bounds over a set of positive density}, arXiv:2205.02145.

\bibitem[BNZ20]{BNZ20_DF}
\bysame, \emph{D-finiteness, rationality, and height}, Trans. Amer. Math. Soc.
  \textbf{373} (2020), 4889--4906.

\bibitem[Bri12]{Bri12_TO}
A.~Bridy, \emph{Transcendence of the {A}rtin-{M}azur zeta function for
  polynomial maps of $\mathbb{A}^1(\bar{F}_p)$}, Acta Arith. \textbf{156}
  (2012), 293--300.

\bibitem[Bri16]{Bri16_TA}
\bysame, \emph{The {A}rtin-{M}azur zeta function of a dynamically affine
  rational map in positive characteristic}, J. Th\'eor. Nombres Bordeaux
  \textbf{28} (2016), no.~2, 301--324.

\bibitem[Car21]{Car21_U}
F.~Carlson, \emph{{\"U}ber ganzwertige funktionen}, Math. Z. \textbf{11}
  (1921), 1--23.

\bibitem[Dwo60]{Dwo60_OT}
B.~Dwork, \emph{On the rationality of the zeta function of an algebraic
  variety}, Amer. J. Math. \textbf{82} (1960), 631--648.

\bibitem[GNS]{GNS21_EO}
K.~Gunn, K.~D. Nguyen, and J.~C. Saunders, \emph{Endomorphisms of positive
  characteristic tori: entropy and zeta function}, arXiv:2112.14812.

\bibitem[Guc70]{Guc70_AA}
J.~Guckenheimer, \emph{Axiom {A}+{N}o {Cycles} {$\Longrightarrow \zeta_f(t)$}
  {Rational}}, Bull. Amer. Math. Soc. \textbf{76} (1970), 592--594.

\bibitem[Hay65]{Hay65_TD}
D.~R. Hayes, \emph{The distribution of irreducibles in {$GF[q,x]$}}, Trans.
  Amer. Math. Soc. \textbf{117} (1965), 101--127.

\bibitem[Hin94]{Hin94_ZF}
A.~Hinkkanen, \emph{Zeta functions of rational functions are rational}, Ann.
  Acad. Sci. Fenn. Ser. AI Math. \textbf{19} (1994), 3--10.

\bibitem[KMN19]{KMN19_AA}
A.~Kulkarni, N.~M. Mavraki, and K.~D. Nguyen, \emph{Algebraic approximations to
  linear combinations of powers: an extension of results by {M}ahler and
  {C}orvaja-{Z}annier}, Trans. Amer. Math. Soc. \textbf{371} (2019),
  3787--3804.

\bibitem[Man71]{Man71_AA}
A.~Manning, \emph{Axiom {A} diffeomorphisms have rational zeta functions},
  Bull. Lond. Math. Soc. \textbf{3} (1971), 215--220.

\bibitem[Neu99]{Neu99_AN}
J.~Neukirch, \emph{{A}lgebraic {N}umber {T}heory}, Grundlehren der
  mathematischen Wissenschaften, vol. 322, Springer-Verlag, 1999, Translated
  from the German by N. Schappacher.

\bibitem[Sch03]{Sch03_LR}
W.~M. Schmidt, \emph{Linear recurrence sequences}, Diophantine Approximation
  (Cetraro, Italy, 2000), Lecture Notes in Math., no. 1819, Springer-Verlag,
  2003, pp.~171--247.

\bibitem[VO16]{VO16_FO}
M.~Viana and K.~Oliveira, \emph{Foundations of {E}rgodic {T}heory}, Cambridge
  studies in advanced mathematics, vol. 151, Cambridge University Press,
  Cambridge, 2016.

\bibitem[Wal82]{Wal82_AI}
P.~Walters, \emph{An {I}ntroduction to {E}rgodic {T}heory}, Graduate Texts in
  Mathematics, vol.~79, Springer-Verlag, New York, 1982.

\bibitem[Wei49]{Wei49_NO}
A.~Weil, \emph{Numbers of solutions of equations in finite fields}, Bull. Amer.
  Math. Soc. \textbf{55} (1949), 497--508.

\end{thebibliography}
\end{document}